\title{On Hopf algebras whose coradical is a cocentral abelian cleft extension}
\author{G.~A.~Garc\' ia}
\address{CMaLP, Departamento de Matem\'atica, Facultad de Ciencias Exactas,
Universidad Nacional de La Plata. CIC-CONICET. (1900) La Plata, Argentina.}
\email{ggarcia@mate.unlp.edu.ar}
\author{M.~Mastnak}
\address{Department of Mathematics and Computing Science, Saint Mary's University, 923 Robie St, Halifax, Nova Scotia, Canada B3N 1Z9}
\email{mmastnak@cs.smu.ca}
\thanks{\noindent 2020 \emph{MSC:}\,  16T05  ---
\emph{Keywords:} Hopf algebras, Abelian extensions, Yetter-Drinfeld modules, Fusion rules, Nichols algebras.}
\def\M{\mathcal{M}}
\def\Oc{\mathcal{O}}
\def\B{\mathcal{B}}
\def\J{\mathcal{J}}
\def\D{\mathcal{D}}
\def\YD #1#2 { _{#1}^{#2}\mathcal{Y}\mathcal{D}}
\newcommand{\ydh}{{}^H_H\mathcal{YD}}
\def\Zn{\mathbb{Z}_n}
\def\Zm{\mathbb{Z}_m}
\def\ot{\otimes}
\def\kk{\Bbbk}
\def\wx{\widehat{x}}
\def\de{\delta}
\def\ep{\epsilon}
\def\act{\cdot}
\def\dact{*}
\def\De{\Delta}
\def\bv#1#2{v_{#1}^{(#2)}}
\def\by#1#2{y_{#1}^{(#2)}}
\def\bu#1#2{u_{#1}^{(#2)}}
\def\Alg{\operatorname{Alg}}
\def\ord{\operatorname{ord}}
\def\id{\operatorname{id}}
\def\ad{\operatorname{ad}}
\def\sgn{\operatorname{sgn}}
\def\Map{\operatorname{Map}}
\def\End{\operatorname{End}}
\def\Ker{\operatorname{Ker}}
\def\Img{\operatorname{Im}}
\def\Fun{\operatorname{Fun}}
\def\GL{\operatorname{GL}}
\def\CC{\mathbb{C}}
\def\N{\mathbb{N}}
\def\B{\mathfrak{B}}
\def\ot{\otimes}
\def\Stab{\operatorname{Stab}}
\def\hit{\rightharpoonup}
\newtheorem{lemma}{Lemma}[subsection]
\newtheorem{theorem}[lemma]{Theorem}
\newtheorem{corollary}[lemma]{Corollary}
\newtheorem{proposition}[lemma]{Proposition}
\newtheorem{remark}[lemma]{Remark}
\newtheorem{definition}[lemma]{Definition}
\begin{document}

\maketitle

\begin{abstract}
This paper is a first step toward the full description of a family of 
Hopf algebras whose coradical is isomorphic to
a semisimple Hopf algebra $K_{n}$, $n$ an odd positive integer, obtained by a cocentral abelian cleft extension. We describe the simple Yetter-Drinfeld modules, compute 
the fusion rules and determine the finite-dimensional Nichols
algebras for some of them. In particular, we give the description of the 
finite-dimensional Nichols 
algebras over simple modules over $K_{3}$. This includes a family of 
$12$-dimensional Nichols algebras $\{\B_{\xi}\}$ depending on  
$3$rd roots of unity. Here, $\B_{1}$ is isomorphic 
to the well-known Fomin-Kirillov
algebra, and  $\B_{\xi} \simeq \B_{\xi^{2}} $ as graded algebras 
but  $\B_1$ is not isomorphic to $\B_{\xi} $ as algebra for $\xi\neq 1$. 
As a byproduct 
we obtain new Hopf algebras of dimension 216.  
\end{abstract}

\section{Introduction}
The question of classifying Hopf 
algebras of finite (Gelfand-Kirillov) dimension has been a challenging problem since the beginning of the theory
in the late 60's and beginning of the 70's. Since then, there have been only a handful of general results that help to determine 
the structure of a Hopf algebra. Among them one may cite the Kac-Zhu Theorem \cite{Z} that states that a Hopf 
algebra of prime dimension is isomorphic to a group algebra, the Nichols-Zoeller \cite{NZ}
theorem that claims that 
a finite-dimensional Hopf algebra is free over any Hopf subalgebra, or the classification of 
(almost all) finite dimensional pointed Hopf algebras with abelian coradical \cite{AS}. The key ingredient of 
this last result is the introduction of a general method to construct and classify Hopf algebras whose
coradical is a Hopf subalgebra. This method is known as the \textit{Lifting Method} and it is particularly useful to classify
finite (Gelfand-Kirillov) dimensional pointed Hopf algebras, where the coradical is a group algebra, see for instance \cite{AS}, 
\cite{AAH}, \cite{AHS}, \cite{FG}, \cite{GGI}
and \cite{GIV}, to name a few. This method was later generalized in \cite{AC}; here the coradical is replaced by the Hopf 
subalgebra generated by it. Using this, new families
of Hopf algebras where found, see for instance \cite{AA}, \cite{GJG}, \cite{BGGM}, \cite{X1}, \cite{XH}.

In the last years, the appearance of full classification results has been sparse. 
One of the reasons may lay in the lack of examples with different properties, 
as one needs to know all possible examples to have a complete set of Hopf algebras up to isomorphism. On the other hand, descriptions 
of different families of Hopf algebras can be found in the literature.
For example, those that are non-pointed
but satisfy the \textit{Chevalley Property}, i.e. the coradicals are Hopf subalgebras, see for example \cite{AGM}, \cite{AV}, \cite{Shi}, \cite{Z1, Z2,Z3}.

%%%%%%%%%%
{
%%%%%%%%%%
With the aim of understaning non-pointed and non-copointed Hopf algebras with the Chevalley Property,
%%%%%%%%%%
}
%%%%%%%%%%
we begin in this paper the study of Hopf algebras whose coradical is a 
semisimple Hopf algebra $K_{n} := \Bbbk^{\Zn\times \Zn} \rtimes_{\beta} \Bbbk \mathbb{Z}_{2}$ given by a double crossed product; 
here $n\in \N$ is odd and bigger than one. It can also be described
as an abelian extension $\Bbbk \to \Bbbk \mathbb{Z}_{n}\to K_{n} \to 
\Bbbk\mathbb{D}_{n} \to \Bbbk$, where $\mathbb{D}_{n}$ is the dihedral
group of order $2n$. 
%%%%%%%%%%
{
%%%%%%%%%%
Despite the fact that the algebras $K_{n}$ admit an explicit and rather clear presentation,
they are non-trivial enough to produce new families of examples of finite-dimensional Hopf algebras with the Chevalley Property 
through the process of bosonization and lifting of Nichols algebras in the category $_{K_{n}}^{K_{n}}\mathcal{YD}$ of Yetter-Drinfeld modules over $K_{n}$.
The most interesting examples are the ones where the generators of the Nichols algebras are not homogeneous with respect to 
a group-like element in $K_{n}$, i.e. the realization of the braided vector space is not principal.
%%%%%%%%%%
}
%%%%%%%%%%

As part of the lifting method, one needs to understand the category 
$_{K_{n}}^{K_{n}}\mathcal{YD}$. 
As a first step, we describe the simple 
objects and the fusion rules of this semisimple category, 
see Theorem \ref{thm:class-simple-objects} and 
Subsection \ref{subsec:fusion-rules}.
In order to determine all simple objects we use two different approaches. 
For low dimensional objects, say dimension one or two, we adapt the method of ``little groups'' of Wigner and Mackey,
see \cite[Subsection 8.2]{Serre}. For the remaining objects, we look 
at simple subcomodules of $K_{n}$, that is, we apply a general method provided by Radford \cite{R2}.
Besides the 
importance for our goal, the result is interesting on its own right 
as we present the corresponding fusion ring explicitly, see 
Theorem \ref{thm:fusion-ring}.
Another step of the lifting method is the determination of the 
finite-dimensional Nichols algebras. We describe some of them 
in Section \ref{sec:Nichols-alg}. 
There are families of Yetter-Drinfeld modules that consist 
of braided vector spaces of diagonal type, thus their Nichols algebras 
are determined by the work of Heckenberger \cite{He} and Angiono \cite{An}.
On the other hand, some 
Yetter-Drinfeld modules turn out to be braided vector spaces 
of rack type with non-principal realization. Moreover, these are isomorphic 
to the braided vector spaces associated with 
the dihedral rack and a constant cocycle, i.e. a conjugacy 
class of an involution in the dihedral group $\mathbb{D}_{n}$ and a
one-dimensional representation. In particular, for $n=3$ 
a family of 
$12$-dimensional Nichols algebras $\{\B_{\xi}\}$ depending on 
$3$rd roots of unity appear. The algebra $\B_{1}$ is isomorphic 
to the well-known Fomin-Kirillov
algebra, $\B_{\xi} $ and $ \B_{\xi^{2}} $ are isomorphic as graded algebras 
but $\B_1$ is not isomorphic to $\B_{\xi} $ as algebra for $\xi\neq 1$, see Theorem
\ref{thm:Bi-not-iso-B1}. 
We end the paper
with the presentation of the finite-dimensional Nichols algebras 
over simple modules when $n=3$.
As a consequence, we obtain new Hopf algebras of dimension $216$ by the process of bosonization.

In future work we intend to describe all finite-dimensional Nichols 
algebras of semisimple Yetter-Drinfeld modules together with their 
liftings in order to obtain all Hopf algebras whose coradical
is isomorphic to $K_{n}$.

The article is organized as follows. In Section \ref{sec:prelim}
we include definitions and basic facts that are needed along the paper; 
in particular, we recall the definition of Yetter-Drinfeld modules
and Nichols algebras. In Section \ref{sec:Kn} we describe explicitly
the family of Hopf algebras $K_{n}$, 
whereas in Section \ref{sec:simple-YD-modules} we determine all 
simple Yetter-Drinfeld modules over $K_{n}$. 
As the category is semisimple, because $K_{n}$ is a semisimple algebra,
this is enough to describe all objects. In Section \ref{sec:fusion-ring}
we compute the fusion rules of $_{K_{n}}^{K_{n}}\mathcal{YD}$
and in Section \ref{sec:Nichols-alg} we determine the Nichols algebras associated with some 
modules in ${}_{K_{n}}^{K_{n}}\mathcal{YD}$.

\section*{Acknowledgements}
This work was supported, in part, by CONICET, ANPCyT (PICT 2018-00858), Secyt-UNLP (Argentina), and NSERC (Canada).

\section{Preliminaries}\label{sec:prelim}

Let $n\in \N$ and let $\Bbbk$ be a field containing a primitive $n$-th root of unity. 
We assume also that the characteristic of $\Bbbk$ is either 
zero or does not divide $2n$.  
All vector spaces are considered over $\Bbbk$ and
$\ot = \ot_{\Bbbk}$.
Given a group $G$, we denote by $\widehat{G}$ its character group. 
For $m\in\N$, we denote by $\Zm$ the ring of integers module $m$. 
We work with Hopf algebras $H$ over $\Bbbk$; as usual, we write 
$\Delta$, $S$ and $\varepsilon$ to denote the comultiplication,
the antipode and the counit, respectively. Also, the comultiplication
and the comodule structures
are written using Sweedler's notation, \textit{i.e.}
$\Delta(h)=h_{(1)}\ot h_{(2)}$ for all $h\in H$ and 
$\delta(v)= v_{(-1)}\ot v_{(0)}$ 
for a left $H$-comodule $(V,\delta)$  and $v\in V$.
{ The (left) adjoint action
of a Hopf algebra $H$ on itself is
denoted by $h\rightharpoonup x = h_{(1)}xS(h_{(2)})$ for all $h,x\in H$.}
We refer to \cite{R} 
for Hopf algebras and \cite{A}, \cite{HS} for Nichols algebras.

\subsection{Yetter-Drinfeld modules and Nichols algebras}\label{subsec:YD-Nichols}
Let $H$ be a Hopf algebra. A (left) Yetter-Drinfeld module over $H$ is a left 
$H$-module $(V,\cdot)$ and a left $H$-comodule $(V,\delta)$ such that
$$ 
\delta(h\cdot v) = h_{(1)}v_{(-1)}S(h_{(3)})\ot h_{(2)}\cdot v_{(0)}\qquad \text{ for all }h\in H, v\in V.
$$
Yetter-Drinfeld modules together with morphisms of left $H$-modules and left $H$-comodules form a braided rigid tensor category denoted by $\ydh$.
The braiding is given by $c_{V,W}(v\ot w) = v_{(-1)}\cdot w \ot v_{(0)}$ for all $v\in V$, $w\in W$ with $V$, $W$ objects in $\ydh$.
{The Hopf algebra $H$ is
an object in $\ydh$ by the left adjoint action on itself and the coaction given by the comultiplication.}

Let $V \in \ydh$. Then, the tensor algebra $T(V)$ is a graded braided
Hopf algebra in $\ydh$. 
The \textit{Nichols algebra} 
$\B(V) = \bigoplus_{n\geq 0} \B^{n}(V)$ of $V$ is the graded braided Hopf algebra in $\ydh$ defined by the quotient
$\B(V) = T(V) /\J(V)$,
where $\J(V)$ is the largest Hopf ideal of $T(V)$ generated as an ideal by 
homogeneous elements of degree bigger or equal than $2$. By definition, we have that 
$\B^{0}(V) = \Bbbk$ and $\B^{1}(V) = V$.
%In case $\B(V)$ is finite-dimensional, we denote by $n_{top}$ its maximum degree. It is well-known that 
%$\lambda_V:=\B^{n_{top}}(V)$ is one-dimensional (in fact, $\B(V)$ satisfies the Poincar\'e duality);
%in particular, it is a simple $H$-module and $H$-comodule.
Actually, one can define a Nichols algebra 
$\B(V)$ from any rigid braided vector space $(V,c)$; it turns out that $\B(V)$ is completely determined, as algebra and coalgebra,
by the braiding.
There are several equivalent definitions of the Nichols algebra associated with a braided vector space $(V,c)$, each of them 
particularly useful for different purposes. Here below we recall the one related to the quantum symmetrizer, as it enables the computation of \textit{at least} some relations.

Let $V$ be a vector space and $c \in \End(V\ot V) $ be a solution of the \textit{braid equation}, that is 

$$
(c\ot \id)(\id\ot c)(c\ot \id)=(\id\ot c)(c\ot \id)(\id\ot c) \qquad\text{ in }\End(V\ot V\ot V).
$$ 

Let $T(V)$, $T^{c}(V)$ be the tensor algebra and 
the cotensor algebra of $V$, respectively. Both are braided bialgebras and there exists a unique 
bialgebra map $\mathbf{S}:T(V)\to T^{c}(V)$ such that $\mathbf{S}|_V = \id_{V}$. The image
$\Img \mathbf{S} \subseteq T^{c}(V)$ is a braided bialgebra called the \textit{quantum symmetric algebra}.
If the braiding is rigid, then $\Img \mathbf{S} = \B(V)$ is a Nichols algebra. 
There exists a way to describe explicitly the kernel of $\mathbf{S}$ by means of actions of braid groups.  

The braid group  
$$\mathbb{B}_n=\langle \tau_{1},\ldots, \tau_{n-1}|\ \tau_{i}\tau_{j}=\tau_{j}\tau_{i}, \
\tau_{i+1}\tau_{i}\tau_{i+1}=\tau_{i}\tau_{i+1}\tau_{i}, \text{ for }1\leq i\leq n-2\text{ and }j\neq i\pm1\rangle$$ 
acts
naturally on $V^{\ot n}$ via $\rho_{n}:\mathbb{B}_{n} \to GL(V^{\ot n})$ with $\rho_{n}(\tau_{i})=c_{i} 
=\id_{V^{\ot i-1}}\ot c\ot\id_{V^{n-i-1}}:V^{\ot n}\to V^{\ot n} $. Using the
Matsumoto
(set-theoretical) section from the symmetric group $\mathbb{S}_{n}$ to $\mathbb{B}_n$:
\[
M:\mathbb{S}_n\to \mathbb{B}_n,\qquad
(i,i+1)\mapsto \tau_i,\qquad \text{for all }1\leq i\leq n-1,\]
one can define the quantum symmetrizer
$QS_n:V^{\ot n}\to V^{\ot n}
$ by
$$
QS_{n} = \sum_{\sigma \in \mathbb{S}_{n}}\rho_{n}(M (\sigma)) \in \End(V^{\ot n} ).
$$
For example $QS_{2} = \id + c$, and 
\[
QS_{3}=\id+c\ot \id+\id\ot c+(\id\ot c)(c\ot \id)+(c\ot \id)(\id\ot c)+(c\ot \id)(\id\ot c)(c\ot \id).
\]

The Nichols algebra associated with $(V,c)$
is the quotient of the tensor algebra $T(V)$  by the homogeneous ideal
$$
\mathcal{J}=\bigoplus_{n\geq 2 } \mathcal{J}_{n} = \bigoplus_{n\geq 2 } \Ker QS_{n},
$$
 or equivalently,
$\B(V):= \B(V,c)=\oplus_n\Img(Q{S}_n) = \oplus_n T(V) / \mathcal{J}_{n} $. In particular,
$\B(V)$ is a graded algebra.

If $W\subseteq V$ is a subspace such that $c(W\ot W) \subseteq W\ot W$, one may identify
$\B(W)$ with a subalgebra of $\B(V)$; eventually  belonging to different braided rigid categories.
In particular, if $\dim \B(W) = \infty$, then $\dim \B(V) = \infty$. Thus, if 
$V$ contains a non-zero element $v$ such that $c(v\ot v) = v\ot v$, then $\dim (V) = \infty$.
We refer to \cite{A}, \cite{HS} for more details on Nichols algebras.

\section{The Hopf algebra $K_{n}$}\label{sec:Kn}

We fix groups $N$ and $Q$ and a \emph{right} action of $Q$ on $N$ by $N\times Q\to N$, $(u,x)\mapsto u^x$.
This translates into a \emph{left} action of $\kk Q$ on $\kk^N=(\kk N)^*$ by $(^x f)(u)=f(u^x)$ for all $x\in Q$ and $f\in \kk^{N}$.
For $u\in N$, write $p_u\in \kk^N$ 
for the map given by $p_u(v)=\delta_{u,v}$. Then 
$\{p_{u}\}_{u\in N}$ is
the dual basis of the standard basis of $\kk N$. The action of $Q$ on this basis is then given by $ ^x p_u = p_{u^{x^{-1}}}$ for all $x\in Q$, $u\in N$.

\subsection{The Hopf algebra $\kk^N\rtimes_\beta \kk Q$}
Let $\beta\colon Q\times N\times N\to \kk^\times$ be a map. For $x\in Q$ we write $\beta_x(u,v)=\beta(x,u,v)$ 
so that we may consider $\beta_x\colon N\times N\to \kk^\times$ as an element in $\kk^{N\times N}$. Clearly, one also has an action of $\kk Q$ on 
$\kk^{N\times N}$; for short,   
we also abbreviate 
$ (^x \beta_y)(u,v)=\beta_y(u^x,v^x)$ 
for all $x,y\in Q$ and $u,v\in N$.  

\begin{definition}
 We say that $\beta$ is a normalized $2$-cocycle if
for $x,y\in Q$ and $u,v,w\in N$ we have 
\begin{eqnarray*}
\beta(1_Q,u, v)&=&1,\\
\beta(xy, u, v)&=&\beta(x,u,v)\beta(y,u^x,v^x),\\
\beta(x,1_N,v)&=& 1 = \beta(x,u,1_N),\\
\beta(x, v,w)\beta(x,u,vw)&=&
\beta(x,uv,w)\beta(x,u,v).
\end{eqnarray*} 
\end{definition}

In short $\beta$ can be viewed as a normalized $1$-cocycle as a map from $Q$ to 
$\Map(N\times N,\kk^\times)$ with respect to the induced action discussed above (i.e., $\beta_1=\varepsilon$ and $\beta_{xy}=\beta_x\, {}^x \beta_y$) and for each fixed $x$, 
$\beta_x$ is a normalized $\kk^\times$-valued group $2$-cocycle on $N$ with respect to the trivial action.  
We will be mostly focused on the special case where for each $x\in Q$, the map $\beta_x$ is a bicharacter, i.e., for $x\in Q$ and $u,v,w\in N$ we have that
$\beta_x(uv,w)=\beta_x(u,w)\beta_x(v,w)$ and $\beta_x(u,vw)=\beta_x(u,v)\beta_x(u,w)$.

Using the normalized $2$-cocycle $\beta$ we may define a Hopf algebra
structure on $\kk^{N}\ot \kk Q$ as follows.

\begin{definition}
The Hopf algebra $B=\kk^N\rtimes_\beta \kk Q$ is the vector space with basis $\{p_u\widehat{x}:u\in N, x\in Q\}$, whose
multiplication is given by 
$$
(p_u\widehat{x})(p_v\widehat{y}) = \delta_{u,v^{x^{-1}}} p_u\widehat{xy}
\qquad\text{ for all }x,y\in Q, u,v \in N.
$$
The comultiplication is given by
$$
\Delta(p_u\widehat{x})=\sum_{v,w\in N, vw=u} \beta_x(v,w)p_v\widehat{x}\otimes p_w \widehat{x};
$$
in particular, $\Delta(\widehat{x}) =\sum_{v,w\in N} \beta_x(v,w)p_v\widehat{x}\otimes p_w \widehat{x}$.
{The counit is given by 
$$
\varepsilon(p_{u}) = p_{u}(1) = \delta_{u,1} \qquad\text{ and }\qquad \varepsilon(\widehat{x}) = 1\qquad\text{ for all }u\in N, x\in Q.
$$
} 
The antipode is given by:
\begin{eqnarray*}
S(p_u) &=& p_{u^{-1}},\\
S(\widehat{x}) &=& \sum_{u\in N} \beta_x^{-1}(u,u^{-1})\widehat{x^{-1}} p_u
= \sum_{u\in N} \beta_x^{-1}(u,u^{-1})p_{u^x}\widehat{x^{-1}}.
\end{eqnarray*}
In the special case when every $\beta_x$ is an alternating bicharacter we have $S(\widehat{x})=\widehat{x^{-1}}$
for all $x\in Q$.
\end{definition}

We frequently make the following identifications for $f\in \kk^N$ and $x\in Q$:
\begin{eqnarray*}
1&=&1_B=\widehat{1_Q},\\
f&=&f\widehat{1_Q}=\sum_{u\in N} f(u)p_u\widehat{1_Q}, \\
f\widehat{x} &=& \sum_{u\in N} f(u)p_u\widehat{x}, \\
\widehat{x}&=&\varepsilon_N \widehat{x} = \sum_{u\in N} p_u\widehat{x}.
\end{eqnarray*}
With these identifications in mind, $\kk^N$ is a subalgebra of $B$ and $\widehat{x} f = (^x f) \widehat{x}$ for $f\in\kk^N, x\in Q$.

\subsection{The Hopf algebra $K_{n}$}
Assume that $n$ is odd and bigger than 1 and that $m$ divides $n$.  Let $\xi$ be a fixed primitive $m$-th root of $1$.
The Hopf algebras $H_{m,n}$ were first described by G.~I.~Kac {\color{magenta} \cite{K}}
and later on revisited by A.~Masuoka {\color{magenta} \cite{Ma}}.  
The presentation below is taken from {\color{magenta} \cite{M}}.  They are a special case of the construction above where
$$
N=C_n\times C_n = \langle a,b : a^n, b^n\rangle,\qquad\text{ and }\qquad Q=C_2=\langle x : x^2 \rangle,
$$ 
the action of $Q$ on $N$ is given by $a^x=b, b^x=a$, and the cocycle
$\beta$ is an alternating bicharacter given by
\smallskip
\begin{equation}\label{eq:definition-beta}
\beta_x(a^i b^j, a^k b^\ell) = \xi^{i\ell-jk}. 
\end{equation}
\smallskip
If $m=n$, then we set $K_n=H_{n,n}$.
For a map $f\in\kk^N$ we write $f(i,j)=f(a^ib^j)$ and for a map $g\colon \kk^{N\times N}\simeq \kk^N\otimes\kk^N\to\kk$ 
we sometimes abbreviate $g((i,j),(k,\ell))=g(a^ib^j\otimes a^kb^\ell)$.

\subsection{Structure of $K_n$}
Let $n>1$ be odd and let $\xi$ be a fixed primitive $n$-th root of $1$.  Set $p_{i,j}=p_{a^i b^j}$ and $f_{i,j}=p_{i,j}\widehat{x}$ for all $i,j\in \Zn$.  Then 
$\{p_{i,j}, f_{i,j}: i,j\in \mathbb{Z}_n\}$ is a basis for $K_n$.
The algebra structure in terms of this basis is as follows:
$$
p_{ij}p_{ij} = p_{ij},\qquad
p_{ij}f_{ij} = f_{ij}, \qquad
f_{ij}p_{ji} = f_{ij},\qquad
f_{ij}f_{ji} = p_{ij},
$$
where all other products of two basis elements are zero.  The coalgebra
structure is given by:
\begin{eqnarray*}
\Delta(p_{ij})&=& \sum_{i'+i''=i, j'+j''=j} p_{i'j'}\otimes p_{i''j''},\qquad \varepsilon(p_{ij}) = \delta_{i,0}\delta_{j,0},\\
\Delta(f_{ij})&=&  \sum_{i'+i''=i, j'+j''=j} \xi^{i'j''-j'i''}\, f_{i'j'}\otimes f_{i''j''},\qquad \varepsilon(f_{ij})= \delta_{i,0}\delta_{j,0},\\
\Delta(\widehat{x}) &=&\sum_{i,j,k,\ell\in \Zn} \xi^{i\ell-jk}p_{ij}\, \widehat{x}\otimes p_{k\ell} \widehat{x},\qquad \varepsilon(\widehat{x}) = 1.
\end{eqnarray*}
The antipode is as follows:
\begin{align*}
S(p_{ij})=p_{-i,-j},\qquad \qquad
S(f_{ij})=f_{-j,-i},\qquad \qquad
S(\widehat{x}) =\widehat{x^{-1}}. 
\end{align*}

\section{Simple Yetter-Drinfeld modules}\label{sec:simple-YD-modules}

{ In this section we present all simple Yetter-Drinfeld modules over the Hopf algebra $K_{n}$.
First we adapt the method of ``little groups" of Wigner and Mackey to produce simple
Yetter-Drinfeld modules over $B=\kk^N\rtimes_\beta \kk Q$  from one-dimensional comodules. Then,
we construct simple objects from a matrix coalgebra coaction.}

\subsection{Little groups of Wigner and Mackey}
In the following we describe an adaptation of the method of ``little groups" of Wigner and Mackey.  
This method is used to describe irreducible representations of 
a semidirect product of groups $A\rtimes H$ with $A$ abelian. 
The treatment below is taken from Subsection 8.2 of \cite{Serre}.  
Note that in its proof it is not needed for $A$ to be a group; the treatment and proofs carry over 
almost word for word to describe irreducible representations of an algebra $B=A\rtimes \kk Q$ where $Q$ 
is a finite group acting on the finite dimensional commutative semisimple algebra $A$.  
Using this action one has that $Q$ also acts on the left on $X=\Alg(A,\kk)$ by 
$(q\chi)(a)=\chi(q^{-1}\cdot a)$ for all $q\in Q$, $a\in A$ and $\chi\in X$.

Let $\chi_1,\ldots, \chi_k$ be representatives of all distinct orbits of $X/Q$.  
For $i=1,\ldots, k$, let $Q_i$ be the stabilizer of $\chi_i$, i.e., $Q_i=\{q\in Q: q\chi_i=\chi_i\}$, 
and let $B_i=A\rtimes\kk Q_i$.  
For $i=1,\ldots, k$ and $\rho\colon Q_i\to \GL(U)$, let $\chi_i\otimes \rho\colon B_i\to \GL(U)$ denote the 
representation of $B_i$ given by $(\chi_i\otimes \rho)(a\rtimes q)=\chi_i(a)\rho(q)$ for $a\in A$ and $q\in Q_i$.  Finally,
let $\theta_{i,\rho}\colon B\to \GL(B\otimes_{B_i} U)$ be the induced representation.

\begin{theorem}[cf. Proposition 25 of \cite{Serre}]\
\begin{enumerate}
\item The representation $\theta_{i,\rho}$ is irreducible if and only if $\rho$ is irreducible.
\item The representations $\theta_{i,\rho}, \theta_{i',\rho'}$ are equivalent if and only if $i=i'$ and 
the representations $\rho, \rho'$ of $Q_i$ are equivalent.
\item Every irreducible representation of $B$ is equivalent to some $\theta_{i,\rho}$.
\end{enumerate}
\end{theorem}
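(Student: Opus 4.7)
The plan is to adapt the proof of Proposition 25 in Subsection 8.2 of \cite{Serre} essentially verbatim, with the abelian normal subgroup $A$ in Serre's setting replaced by the finite-dimensional commutative semisimple algebra $A$, and characters of $A$ replaced by algebra maps $\chi \in X = \Alg(A,\kk)$. The crucial structural input is that $A \cong \kk^X$ via $a \mapsto (\chi \mapsto \chi(a))$, so any finite-dimensional $A$-module $V$ splits as a direct sum of eigenspaces $V = \bigoplus_{\chi\in X} V_\chi$, where $V_\chi = \{v \in V : a\cdot v = \chi(a) v \text{ for all } a\in A\}$. A direct computation using the semidirect product relation $q a = (q\cdot a) q$ in $B$ shows $q\cdot V_\chi \subseteq V_{q\chi}$ for every $q\in Q$, so the $Q$-action permutes the eigenspaces according to the $Q$-action on $X$.

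To prove (3), I would start with an irreducible $B$-module $V$ and note that for each $Q$-orbit $\mathcal{O}\subseteq X$ the partial sum $\bigoplus_{\chi\in \mathcal{O}} V_\chi$ is a $B$-submodule; irreducibility therefore forces the support of $V$ to be a single orbit, which after relabeling is the orbit of some $\chi_i$. Then $V_{\chi_i}$ is stable under $B_i = A\rtimes \kk Q_i$ and $A$ acts on it by $\chi_i$, so it arises from a $Q_i$-representation $\rho$ on the vector space $V_{\chi_i}$. The evaluation map $B \otimes_{B_i} V_{\chi_i}\to V$, $b\otimes v\mapsto b\cdot v$, is a $B$-linear surjection, and both sides have dimension $|Q/Q_i|\cdot \dim V_{\chi_i}$, so it is an isomorphism and $V\simeq \theta_{i,\rho}$.

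For parts (1) and (2), I would use that induction commutes with direct sums: a decomposition $\rho=\rho_1\oplus\rho_2$ induces $\theta_{i,\rho}=\theta_{i,\rho_1}\oplus\theta_{i,\rho_2}$, so $\theta_{i,\rho}$ irreducible forces $\rho$ irreducible. Conversely, if $\rho$ is irreducible and $W$ is a nonzero $B$-submodule of $\theta_{i,\rho}=B\otimes_{B_i} U$, then $W = \bigoplus_{qQ_i\in Q/Q_i} W\cap (q\otimes U)$ by the eigenspace decomposition, and translation by elements of $B$ shows that all these intersections are nonzero as soon as one of them is; hence $W\cap (1\otimes U)$ is a nonzero $Q_i$-submodule of $U$, equal to $U$ by irreducibility of $\rho$, so $W=\theta_{i,\rho}$. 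For (2), the orbit $\mathcal{O}_i$ is recovered intrinsically as the $A$-support of $\theta_{i,\rho}$, forcing $i=i'$; once $i=i'$, restricting an isomorphism $\theta_{i,\rho}\simeq \theta_{i,\rho'}$ to the $\chi_i$-eigenspace recovers an isomorphism $\rho\simeq\rho'$ as $Q_i$-representations.

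The main thing to verify carefully, and the only genuine obstacle beyond bookkeeping, is the eigenspace decomposition of arbitrary $A$-modules together with the transitivity of the $B$-action on the orbit summands of $\theta_{i,\rho}$; both rest on the identification $A\cong\kk^X$, which for our intended application $A=\kk^N$, $X=N$ holds by construction and requires no further hypothesis on $\kk$.
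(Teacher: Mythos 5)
Your proposal is correct and follows exactly the route the paper intends: the paper gives no written proof beyond citing Proposition 25 of \cite{Serre} and remarking that the argument carries over word for word when the abelian group $A$ is replaced by a split commutative semisimple algebra, and your write-up is precisely that adaptation (eigenspace decomposition over $X=\Alg(A,\kk)$, permutation of eigenspaces by $Q$, and the standard induction arguments). You also correctly flag the one point where care is needed --- the identification $A\simeq\kk^X$ --- and note that it holds for the intended case $A=\kk^N$.
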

\qed

\begin{remark}{\em If we do not fix representatives of orbits, then we can describe 
$\theta_{\chi,\rho}$ where $\chi\in\Alg(A,\kk)$ and $\rho$ is an irreducible 
representation of $\Stab_Q(\chi)$ in the obvious way.  Then two representations 
$\theta_{\chi,\rho}$ and $\theta_{\chi',\rho'}$ are equivalent if and only if the following happens:
\begin{enumerate}
\item The orbits of $\chi$ and $\chi'$ under the action of $Q$ are equal.
\item If $q\in Q$ is such that $\chi'=q\chi$, then $q \Stab_Q(\chi) q^{-1}= \Stab_Q(\chi')$.  
Via this identification we can consider $\rho'$ as a representation of $\Stab_Q(\chi)$
and in this sense it should be equivalent to $\rho$.
\end{enumerate} }
\end{remark}

\begin{remark}{\em We can describe $\theta_{i,\rho}$ in a more explicit way as follows: 
Let $Q_i\le Q$ be the stabilizer of $\chi_i$, and let $U$ be the simple $\kk Q_i$-module 
corresponding to an irreducible representation $\rho$ of $Q_i$. 
Pick representatives $q_j=q_{j,i}$, $j=1,\ldots, m$, of cosets $Q/Q_i$.  
Then the representation $\theta_{i,\rho}$ corresponds to the simple $B$-module 
$W=W_{i,\rho}=\bigoplus_{j=1}^m U_j$ where $U_j$ is $U$ as an $\kk Q_i$-module.  
The action of $A$ on $U_j$ is given by $a\cdot u = \chi_i(q_j^{-1}\cdot a) u = \chi_i(q_j^{-1}a q_j) u$.  
The action of $q\in Q$ is as follows: there is unique $j\in\{1,\ldots, m\}$ and $q'\in Q_i$ such that $q=q_j q'$.  
Then for $u\in U_\ell$ we have that $q\cdot  u = \rho(q_{\ell}^{-1}q'q_{\ell}) u\in U_{j\triangleright \ell}$, 
where $j\triangleright \ell$ is the unique index such that $q_j q_\ell \in q_{j\triangleright \ell} Q_i$.}
\end{remark}

\subsection{Simple Yetter-Drinfeld modules induced by one-dimensional comodules}
Let $B=\kk^N\rtimes_\beta \kk Q$ and assume furthermore that $\beta$ is a bicharacter.  
Below we apply the theory of little groups discussed above to describe simple objects in 
$ _{B} ^{\widehat{N}}  \mathcal{Y}\mathcal{D} $, 
the subcategory of $\YD{B}{B} $ consisting of those Yetter-Drinfeld modules $V$ 
whose coaction lies inside $ \kk\widehat{N} \otimes V$.   
{ The idea is to 
define a simple $B$-module with 
a compatible homogeneous coaction 
on 
$\kk \widehat{N}$. 
}

Consider the action of $Q$ on 
$N\times \widehat{N}$ given by
$$
x*(a, \chi) = (a^{x^{-1}}, \beta_x(-,a^{x^{-1}})\beta^{-1}_x(a^{x^{-1}},-) (^x \chi))\qquad \text{ for all }
x\in Q, a\in N, \chi \in \widehat{N},
$$
and let $(a_1,\chi_1),\ldots ,(a_k,\chi_k)$ be a fixed set of representatives of distinct orbits under this action.  
For each $i=1,\ldots, k$, let $Q_i=\Stab_Q(a_i,\chi_i)$ and let $U$ be an irreducible representation of $Q_i$.  
Then the induced $\kk Q$-module $\Theta(U,a_i,\chi_i)=\kk Q\otimes_{\kk Q_i} U$ 
becomes an element in $ _{B} ^{\widehat{N}}  \mathcal{Y}\mathcal{D} $ 
as follows: for all $x,y\in Q$, $f\in \kk^{N}$ and $u\in U$ we set
\begin{eqnarray*}
(f\widehat{x})\cdot (y\otimes_{\kk Q_i} u) &=&  f(a_i^{xy})(xy \otimes_{\kk Q_i} u),\\ 
\delta(y\otimes_{\kk Q_i} u) &=&
\beta_y(-,a_i^{y^{-1}})\beta_y^{-1}(a_i^{y^{-1}},-) (^y \chi_i)  \otimes (y\otimes_{\kk Q_i} u).
\end{eqnarray*}
{ In particular, 
$(f\widehat{x})\cdot (1\otimes_{\kk Q_i} u) =  f(a_i^{x})(x \otimes_{\kk Q_i} u)$ and 
$\delta(1\otimes_{\kk Q_i} u) =
\chi_i \ot 1\otimes_{\kk Q_i} u$ for all $u\in U$. Note that the formula for the coaction follows from the compatibility condition, i.e. 
$\delta(y\otimes_{\kk Q_i} u) = \delta(\widehat{y} \cdot (1 \otimes_{\kk Q_i} u))$.}

Alternatively, pick representatives $x_1,\ldots, x_m$ of cosets $Q/Q_i$.  Then
$\Theta(U,a_i,\chi_i)=\bigoplus_{j=1}^m U_j$ as a $\kk Q_i$-module, 
where as a $\kk Q_i$-module we have that each $U_j\simeq U$.  
Let us describe its structure explicitly.
For each $j=1,\ldots,m$, let $v_j=u_i^{x_j^{-1}}$ and let
$\theta_j=\beta_x(-,v_j)\beta^{-1}_x(v_j,-)( ^{x_j} \chi_i) \in \widehat{N}$.
\smallskip
\begin{itemize}
 \item The $\kk \widehat{N}$-coaction on $U_j$ is then given by $\delta(u)=\theta_j\otimes u$.
 \item   The $B$-action on $U_j$ is given as follows:  If $f\in \kk^N$, then $f\cdot u=f(v_j) u$.   
 If $x\in Q$, let $k\in\{1,\ldots, m\}$ and $y\in Q_i$ be unique such that $x x_j = x_k y$; then $\widehat{x}\cdot u$ is the element corresponding to $y\cdot u$ in $U_k$.
\end{itemize}

{The Yetter-Drinfeld modules  $\Theta(U,a_i,\chi_i)$ are then simple because they are simple as $B$-modules 
by the \textit{little groups} construction. Note that the dimension of the module depends on the dimension of $U$ and the size of the orbit by the action of $Q$.´}

\begin{remark}{\em
If $\beta$ is ``partially trivial", then the above can be extended as follows.
Let 
$$R:=\{t\in Q: \forall a,b\in N, \forall x\in Q, \beta_t(a,b)=1, \beta_x(a^t,b)=\beta_x(a,b)=\beta_x(a,b^t)\};
$$
i.e., $R$ 
consists of elements $t$ of $Q$ where $\beta_t$ is trivial and the actions on each components of $\beta_x$ are trivial as well.  
It turns out that $R$ is a normal subgroup of $Q$ and that $\kk \widehat{N}\rtimes R\subseteq G(B)$.  
In general the inclusion is strict; in the special cases where $\beta$ is either trivial (implying that $R=Q$), 
or $\beta$ is non-degenerate (in the sense that for every $a\not=1_N$ and every $x\not=1_Q$ 
we have that characters $\beta_x(-,a)$, $\beta_x(a,-)$ have trivial kernels; consequently $R=1$) we get equalities. 
Define an action of $Q$ on $N\times (\widehat{N}\rtimes R)$ by
$$
x*(a, \chi r) = (a^{x^{-1}}, \beta_x(-,a^{x^{-1}})\beta_x(a^{x^{-1}},-) (^x \chi)(xrx^{-1})).
$$

Let $(a_1,\chi_1 r_1),\ldots ,(a_k,\chi_k r_k)$ be a fixed set of representatives of distinct orbits under this action.  
For each $i=1,\ldots, k$, let $Q_i=\Stab_Q(a_i,\chi_i r_i)$ and let $U$ be an irreducible representation of $Q_i$.  Then the induced $\kk Q$ module 
$\Theta(U,a_i,\xi_i)=\kk Q\otimes_{\kk Q_i} U$ becomes a Yetter-Drinfeld modules over B as follows:
\begin{eqnarray*}
(f\widehat{x})\cdot (y\otimes_{\kk Q_i} u) &=&  f(a_i^{xy})(xy \otimes_{\kk Q_i} u),\\ 
\delta(x\otimes_{\kk Q_i} u) &=& \beta_x(-,a_i^{x^{-1}})\beta_x^{-1}(a_i^{x^{-1}},-) (^x \chi_i) \widehat{xrx^{-1}} \otimes (x\otimes_{\kk Q_i} u).
\end{eqnarray*}}
\end{remark}

\subsection{Simple Yetter-Drinfeld modules over $K_n$ induced by 
one-dimensional subcomodules}\label{sec:structureVandU}

Here we apply the recipe discussed above to the case where $B=K_n$ to describe all simple Yetter-Drinfeld modules in 
$\YD{K_{n}}{\widehat{N} } $. Recall that $N = \langle a,b:\ a^{n},b^{n}\rangle \simeq
C_{n}\times C_{n}$, $Q=C_{2}$ and $\beta_{x}(a^{i}b^{j},a^{k}b^{\ell}) = \xi^{i\ell - jk}$ for all $i,j,k,\ell \in \Zn$.

  For $m,\ t\in\mathbb{Z}_n$, let $\chi_{m,t}\in\widehat{N}$ be the character on $N$ given by $\chi_{m,t}(a^i b^j)=\xi^{mi+tj}$.  
  Note that the action of $x$ on $\chi_{m,t}$ is given by $ ^x \chi_{m,t} = \chi_{t, m}$ and the ``twisted" action of $C_{2}$ on $N\times \widehat{N}$ 
  is given by 
  $$
  x*(a^{i}b^{j}, \chi_{m,t})=(a^j b^i, \chi_{t+2i,m-2j}) \qquad \text{ for all }i,j,m,t\in \Zn.
  $$  

  Then, the orbits under the action of $Q$ are as follows:
 
 \smallskip
 \begin{enumerate}
\item Orbits of size one: $\{(a^ib^i, \chi_{m,m-2i})\}$ for $i,m\in\Zn$. 
\bigskip
\item Orbits of size two:
\begin{enumerate}
\smallskip
\item[$(a)$] $\{(a^i b^i, \chi_{m,t}), (a^ib^i, \chi_{t+2i, m-2i}\}$, where $i,m,t\in\Zn$ and $t\not=m-2i$.
\smallskip
\item[$(b)$] $\{(a^ib^j, \chi_{m,t}), (a^jb^i,\chi_{t+2i,m-2j})\}$, where
$i,j,m,t\in\Zn$ and $i\not=j$.  
\end{enumerate}
\end{enumerate}
\smallskip
We remark that in the case $(b)$, it is impossible to have 
$(m,t)={(t+2i,m-2j)}$.

\bigskip
The corresponding simple Yetter-Drinfeld modules are as follows:
\smallskip
\begin{enumerate}
\item[$\mathbf{(V^\epsilon_{i,m})}$.]  For $\epsilon =\pm 1$ and $i,m\in \Zn$, the objects $V^\epsilon_{i,m} \in\  \YD{K_{n}}{\widehat{N}} $  
are one-dimensional vector spaces generated by $v\neq 0$ where
\smallskip
\begin{itemize}
 \item[$\triangleright$] the coaction is given by $\delta(v)=\chi_{m,m-2i}\otimes v$;
 \smallskip
 \item[$\triangleright$] the action of is given by $(f\widehat{x}^{k})\cdot w=f(a^i b^i) \epsilon^{k}w$ for $f\in \kk^N$ and $k=0,1$;  
 \smallskip
 \item[$\triangleright$] the braiding is given by $c(v\otimes v)=\xi^{2i(m-i)}\, v\otimes v$. 
\end{itemize}
\smallskip
Up to isomorphism, there are $2n^2$ such modules.

%\bigskip

%\item[$\mathbf{(U_{i,m,t})}$.] Two dimensional modules $U_{i,m,t}$ of Type 1, with $i,m,t \in \Zn$ and $t\not=m-2i$:  
%They are two dimensional vector spaces generated by $u_1, u_2$ where 
%\smallskip
%\begin{itemize}
% \item[$\triangleright$] the coaction is given by $\delta(u_1)=\chi_{m,t}\otimes u_1$,  $\delta(u_2)=\chi_{t+2i,m-2i}\otimes u_2$;
% \smallskip
% \item[$\triangleright$] the action of $\kk^N$ is given by
%$f\cdot u_k = f(a^ib^i)\cdot u_k$ and the action of $\widehat{x}$ is given by
%$\widehat{x}\cdot u_1=u_2$ and $\widehat{x}\cdot u_2=u_1$. 
%\smallskip
%\item[$\triangleright$] The braiding is $\xi^{i(m+t)}$ times the flip, i.e., it is given by
%\begin{align*}
%c(u_1\otimes u_1)&= \xi^{i(m+t)} u_1\otimes u_1, &
%c(u_1\otimes u_2)&= \xi^{i(m+t)} u_2\otimes u_1,\\
%c(u_2\otimes u_1)&= \xi^{i(m+t)} u_1\otimes u_2 &
%c(u_2\otimes u_2)&= \xi^{i(m+t)} u_2\otimes u_2.
%\end{align*}
%\end{itemize}
%\smallskip
%Note that $U_{i,m,t}$ and $U_{i',m',t'}$ are isomorphic if and only if $i=i'$ and $(m',t')\in\{(m,t), (t+2i,m-2i)\}$.   
%Up to isomorphism that are $\frac{1}{2} n^2(n-1)$ such modules.

\bigskip

\item[$\mathbf{(U_{i,j,m,t})}$.] For $i, j, m,t \in \Zn$, the objects $U_{i,j,m,t}$ are two-dimensional vector spaces spanned by non-zero vectors $u_1, u_2$ where
\smallskip
\begin{itemize}
 \item[$\triangleright$] The coaction is given by $\delta(u_1)=\chi_{m,t}\otimes u_1$, 
$\delta(u_2)={\chi_{t+2i,m-2j}}\otimes u_2$.  
\smallskip
\item[$\triangleright$] The action is determined by
$f\cdot u_1 = f(a^ib^j)\cdot u_1$,  $f\cdot u_2 = f(a^jb^i)\cdot u_2$ for $f\in \kk^N$ and 
$\widehat{x}\cdot u_1=u_2$, $\widehat{x}\cdot u_2=u_1$.  
\smallskip
\item[$\triangleright$] The braiding is given by
\begin{align*}
c(u_1\otimes u_1)&= \xi^{mi+tj} u_1\otimes u_1, & 
c(u_1\otimes u_2)&= \xi^{it+mj} u_2\otimes u_1, \\ 
c(u_2\otimes u_1)&= \xi^{it+mj + 2(i^{2}-j^{2})} u_1\otimes u_2, &
c(u_2\otimes u_2)&= \xi^{mi+tj} u_2\otimes u_2.
\end{align*}
\end{itemize}
\smallskip

Two such modules $U_{i,j,m,t}$ and $U_{i',j',m',t'}$ are isomorphic if and only if 
$(i',j',m',t')\in\{(i,j,m,t), (j,i,t+2i,m-2j)\}$. Note that if $i\not=j$, then it is impossible to have both $m=t+2i$ and $t=m-2j$.

\smallskip
These modules are reducible if and only if $i=j$ and $t=-2i+m$.  If this happens then $U_{i,i,m,-2i+m}\simeq V^+_{i,m}\oplus V^-_{i,m}$ 
where the isomorphism is given by $u_1\mapsto v^+ + v^-$ and $u_2\mapsto v^+-v^-$, being $v^{\pm}$ the generator 
of $V_{i,m}^{\pm}$, respectively.

\smallskip
Up to isomorphism that are $\frac{1}{2} n^3(n-1) + \frac{1}{2} n^2(n-1)$ such 
simple modules.
\end{enumerate}

\bigskip
The sum of the squares of dimensions of these simple Yetter-Drinfeld modules is equal to
\begin{equation}\label{eq:dimension-simple-kNB}
n^2\cdot 1+ n^2\cdot 1 + \frac{1}{2} n^2(n-1)\cdot 4 + \frac{1}{2} n^3(n-1)\cdot 4 = 2n^4=\dim(B)\cdot\dim(\kk \widehat{N}).
 \end{equation}

\bigskip
\subsection{Simple Yetter-Drinfeld modules over $K_n$ with matrix coalgebra coaction}\label{sec:structureW}

For $i,j\in \Zn$, we define the following elements 
in $K_{n}$

$$
e_{ij}=\sum_{k\in\Zn} \xi^{-2(i+j)k} f_{k+i-j,k-i+j}.
$$

\begin{proposition} 
The collection $\{e_{ij}\}_{i,j\in\Zn}$ is linearly independent.
\end{proposition}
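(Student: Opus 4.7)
The plan is to show that if $\sum_{i,j \in \Zn} \lambda_{ij} e_{ij} = 0$ then all $\lambda_{ij}$ vanish, by expanding in the basis $\{f_{a,b} : a,b \in \Zn\}$ (whose linear independence in $K_n$ was established in Section \ref{sec:Kn}) and reducing the resulting system to a discrete Fourier / Vandermonde inversion that is invertible precisely because $n$ is odd.

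Substituting the definition, the equation $\sum_{i,j} \lambda_{ij} e_{ij} = 0$ becomes
$$\sum_{i,j,k \in \Zn} \lambda_{ij}\, \xi^{-2(i+j)k}\, f_{k+i-j,\, k-i+j} = 0.$$
The key change of variables uses that $2$ is invertible in $\Zn$ since $n$ is odd. For the pair $(i,j)$, set $s = i-j$ and $r = i+j$, so that $(i,j) \leftrightarrow (s,r)$ is a bijection of $\Zn^2$, and write $\mu_{s,r} = \lambda_{(r+s)/2,\,(r-s)/2}$. For the index of $f$, set $a = k+s$ and $b = k-s$; then $(s,k) \leftrightarrow (a,b)$ is also a bijection of $\Zn^2$, with inverse $k = (a+b)/2$, $s = (a-b)/2$. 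Collecting the coefficient of $f_{a,b}$ and invoking linear independence of $\{f_{a,b}\}$, we obtain
$$\sum_{r \in \Zn} \mu_{s,r}\, \xi^{-r(a+b)} = 0 \qquad \text{for every } (a,b) \in \Zn^2,$$
where $s = (a-b)/2$ is determined by $(a,b)$.

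For each fixed $s$, as $(a,b)$ varies over pairs with $a-b \equiv 2s$, the quantity $c = a+b$ ranges over all of $\Zn$; thus $\sum_{r} \mu_{s,r}\, (\xi^{-c})^r = 0$ for every $c \in \Zn$. Since $\xi$ is a primitive $n$-th root of unity, the matrix $(\xi^{-rc})_{r,c \in \Zn}$ is a non\-singular Vandermonde / DFT matrix, forcing $\mu_{s,r} = 0$ for all $r$, and hence $\lambda_{ij} = 0$. The calculation is routine once the bookkeeping is arranged; the only non-trivial point, and the one place where the hypothesis that $n$ is odd is genuinely used, is the invertibility of multiplication-by-$2$ in $\Zn$ that underlies both the $(i,j) \leftrightarrow (s,r)$ and $(s,k) \leftrightarrow (a,b)$ bijections.
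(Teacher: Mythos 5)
Your proof is correct and follows essentially the same route as the paper's: expand in the linearly independent basis $\{f_{a,b}\}$, use the invertibility of $2$ in $\Zn$ (since $n$ is odd) to parametrize which $\lambda_{ij}$ contribute to a given $f_{a,b}$, and conclude by the nonsingularity of the DFT/Vandermonde matrix $(\xi^{-rc})$ --- which is exactly the paper's observation that a polynomial of degree less than $n$ vanishing at all $n$-th roots of unity is zero. The only differences are cosmetic bookkeeping in the change of variables.
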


\begin{proof}
Suppose $\sum_{i,j} \lambda_{ij} e_{ij}=0$.  Then, for a fixed
$r,s\in\Zn$, the coefficient of $f_{rs}$ in this sum is 
$$
\sum_{2(i-j)=r-s} \lambda_{ij} \xi^{-(i+j)(r+s)}.
$$
Write $2^{-1}$ for the multiplicative inverse of $2$ in $\Zn$ (i.e., $2^{-1} = \frac{n+1}{2}$).  
Now fix $k,\ell\in \Zn$ and set $r=2^{-1}(2k-2^{-1}\ell)$ and $s=-2^{-1}(2k+2^{-1}\ell)$ so that $r-s=2k$ and $r+s=-2^{-1}\ell$.  
Then this coefficient becomes
$$
\sum_{i\in\Zn} \lambda_{i,i-k} \xi^{i\ell}.
$$
Since the elements $\{f_{ij}\}_{i,j\in \Zn}$ are linearly independent,
we have that $\xi^\ell$ is a root of the polynomial $p(x)=\sum_{i=0}^{n-1} \lambda_{i,i-k} x^i$ for every $\ell \in \Zn$.  
This means that $p$ must be identically zero and hence we have that $\lambda_{i,i-k}=0$ for all $i,k$.
\end{proof}

The following proposition gives the comultiplication of the elements 
$\{e_{ij}\}_{i,j\in \Zn}$; they constitute a \textit{comatrix basis}.

\begin{proposition}\label{prop:comatrix-basis} For all 
$i,j\in \Zn$
we have 
$$
\Delta (e_{ij}) = \sum_r e_{ir}\otimes e_{rj}\qquad\text{ and }\qquad
{\varepsilon(e_{ij}) = \delta_{i,j}.}
$$
\end{proposition}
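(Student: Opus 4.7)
The plan is to verify both formulas by direct computation from the definition of $e_{ij}$ together with the coalgebra structure of $K_n$ recalled in Section \ref{sec:Kn}. For the counit, I apply $\varepsilon(f_{a,b})=\delta_{a,0}\delta_{b,0}$ termwise to
\[
\varepsilon(e_{ij})=\sum_{k\in\Zn} \xi^{-2(i+j)k}\,\varepsilon(f_{k+i-j,k-i+j}).
\]
The surviving terms require both $k+i-j=0$ and $k-i+j=0$, which together force $2(i-j)=0$ in $\Zn$. Since $n$ is odd, $2$ is invertible, so this means $i=j$ and then $k=0$. The surviving coefficient is $\xi^0=1$, so $\varepsilon(e_{ij})=\delta_{i,j}$.

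For the coproduct the strategy is to expand both sides in the basis $\{f_{a',b'}\otimes f_{a'',b''}\}_{a',b',a'',b''\in\Zn}$ of $K_n\otimes K_n$ and then match supports and coefficients. Applying $\Delta(f_{a,b})=\sum_{a'+a''=a,\,b'+b''=b}\xi^{a'b''-b'a''}f_{a',b'}\otimes f_{a'',b''}$ to the definition of $e_{ij}$ and summing over $k$ first, each quadruple $(a',b',a'',b'')$ contributes if and only if the compatibility condition $(a'+a'')-(b'+b'')=2(i-j)$ holds, in which case $k=2^{-1}(a'+b'+a''+b'')$ is forced; the resulting coefficient of $f_{a',b'}\otimes f_{a'',b''}$ is $\xi^{-(i+j)(a'+b'+a''+b'')+a'b''-b'a''}$. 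On the other hand, expanding $\sum_r e_{ir}\otimes e_{rj}$ as a triple sum over $r,k_1,k_2$ and making the change of variables $a'=k_1+i-r$, $b'=k_1-i+r$, $a''=k_2+r-j$, $b''=k_2-r+j$ gives a sum indexed by the same set of quadruples: the two formulas $r=i-2^{-1}(a'-b')$ and $r=j+2^{-1}(a''-b'')$ coming from the change of variables agree precisely when the compatibility condition above holds.

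With supports matched, the remaining task is to check the exponent identity
\[
-(i+j)(a'+b'+a''+b'')+a'b''-b'a''\equiv -2(i+r)k_1-2(r+j)k_2\pmod{n}
\]
with $k_1=2^{-1}(a'+b')$, $k_2=2^{-1}(a''+b'')$, $r=i-2^{-1}(a'-b')$. Substituting and using the compatibility relation to rewrite $(i-j)=2^{-1}[(a'-b')+(a''-b'')]$, the difference of the two sides reduces to an expression in $a',b',a'',b''$ that cancels identically (the cross terms $\alpha\delta-\beta\gamma$ on one side are exactly produced by the expansion of $(i-j)(a'+b'-a''-b'')$ on the other). The careful bookkeeping of this exponent calculation is the main obstacle; it is elementary, but crucially requires that $n$ be odd (so that $2^{-1}\in\Zn$) and a systematic use of the compatibility relation to eliminate the auxiliary indices $i,j,r$.
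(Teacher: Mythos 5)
Your proposal is correct and follows essentially the same route as the paper: a direct computation from the explicit formula for $\Delta(f_{a,b})$, with the invertibility of $2$ in $\Zn$ used to reindex the sum (the paper performs a single change of variables $\ell=t+i-r$, $m=t-i+r$, $k=s+t$ inside $\Delta(e_{ij})$, whereas you expand both sides and match supports and coefficients, which is the same computation organized differently). The exponent identity you leave as ``bookkeeping'' does check out: writing $A=a'+b'$, $B=a''+b''$, $C=a'-b'$, $D=a''-b''$, the difference of the two exponents equals $(A-B)\bigl[(i-j)-2^{-1}(C+D)\bigr]$, which vanishes by the support condition.
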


\begin{proof}
Fix $i,j\in \Zn$. A direct computation yields
\begin{eqnarray*}
\Delta (e_{ij}) &=& \Delta \big(\sum_k \xi^{-2(i+j)k} f_{k+i-j,k-i+j}\big) \\
&=& \sum_{k,\ell,m} \xi^{-2(i+j)k+\ell(k-i+j-m)-m(k+i-j-\ell)} f_{\ell,m}\otimes f_{k+i-j-\ell, k-i+j-m}.
\end{eqnarray*}
We now introduce new variables $s,t,r\in\Zn$ and use the following changes 
$$
\ell = t+i-r, \qquad\qquad
m = t-i+r, \qquad\qquad
k = s+t.
$$
Since
$$
2t = \ell+m,\qquad\qquad
2r = m-\ell-2i,\qquad\qquad
2s = 2k-\ell-m,
$$
and $2$ is invertible in $\Zn$, this change of variable is reversible.  Under this change,
the sum above is equal to
\begin{eqnarray*}
\sum_{r,s,t} \xi^{-2(i+j)t-2(r+j)s} f_{t+i-r,t-i+r}\otimes f_{s+r-j, s-r+j} = \sum_e e_{ir}\otimes e_{rj}.
\end{eqnarray*}
{ Finally, $\varepsilon(e_{ij}) = \sum_k \xi^{-2(i+j)k} \varepsilon(f_{k+i-j,k-i+j}) = \sum_k \xi^{-2(i+j)k} \delta_{k,j-i}\delta_{k,i-j} = \delta_{i,j}$.}
\end{proof}

\begin{corollary}
The coalgebra $\kk^N \widehat{x}$ is isomorphic to $\M_n(\kk)^*$, the simple matrix coalgebra of dimension $n^{2}$.
\qed
\end{corollary}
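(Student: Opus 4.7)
The plan is to deduce the corollary as an immediate assembly of the preceding two results together with a dimension count. Recall that the simple matrix coalgebra $\M_n(\kk)^*$ is, by definition, the $n^2$-dimensional coalgebra with basis $\{E_{ij}\}_{i,j\in\Zn}$ satisfying the comatrix relations $\Delta(E_{ij}) = \sum_r E_{ir}\otimes E_{rj}$ and $\varepsilon(E_{ij}) = \delta_{i,j}$. So the whole task reduces to exhibiting such a basis inside $\kk^N\widehat{x}$.

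First I would observe that $\kk^N\widehat{x}$ has dimension $n^2$, with basis $\{f_{ij}\}_{i,j\in\Zn}$, and that each $e_{ij}$ defined above is, by construction, a $\kk$-linear combination of the $f_{k\ell}$'s, hence lies in $\kk^N\widehat{x}$. By the preceding proposition the family $\{e_{ij}\}_{i,j\in\Zn}$ is linearly independent, and since it consists of $n^2$ elements it is therefore a basis of $\kk^N\widehat{x}$. In particular $\kk^N\widehat{x}$ is a subcoalgebra of $K_n$, because Proposition \ref{prop:comatrix-basis} shows that $\Delta(e_{ij}) = \sum_r e_{ir}\otimes e_{rj}\in \kk^N\widehat{x}\otimes\kk^N\widehat{x}$.

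Next I would define the linear map $\varphi\colon \kk^N\widehat{x}\to \M_n(\kk)^*$ by $\varphi(e_{ij}) = E_{ij}$. This is a bijection between bases, hence a linear isomorphism. The relations $\Delta(e_{ij})=\sum_r e_{ir}\otimes e_{rj}$ and $\varepsilon(e_{ij}) = \delta_{i,j}$ from Proposition \ref{prop:comatrix-basis} match exactly the defining relations of the comatrix basis $\{E_{ij}\}$ of $\M_n(\kk)^*$, so $\varphi$ is a coalgebra isomorphism.

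Honestly there is no real obstacle; the work was already done in establishing linear independence and in computing the comultiplication and counit on the $e_{ij}$'s. The only genuine verification is that $\dim \kk^N\widehat{x} = n^2$, which is clear from the basis $\{f_{ij}\}$, so that the $n^2$ linearly independent elements $e_{ij}$ must span all of $\kk^N\widehat{x}$ and not merely a proper subspace.
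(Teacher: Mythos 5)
Your proposal is correct and is exactly the argument the paper intends by leaving the corollary with a bare \qed: the $n^2$ linearly independent elements $e_{ij}$ lie in the $n^2$-dimensional space $\kk^N\widehat{x}$, hence form a basis, and Proposition \ref{prop:comatrix-basis} shows this basis satisfies the defining comatrix relations. Nothing is missing.
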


The following technical lemmas will help us to describe the 
$K_{n}$-module structure on the linear span of the elements $\{e_{r0}\}_{r\in \Zn}$. 
{ As it is a subcoalgebra of $K_{n}$, this is given by the adjoint action of 
$K_n$ on itself, i.e. $y\hit z = y_{(1)} z S(y_{(2)})$ for all $y,z\in K_{n}$. 
For example, a quick check yields
that for the elements $f_{ij}$ with $i,j\in \Zn$ 
and $\chi\in\kk^N$ a character (i.e., a grouplike in $\kk^N\subseteq K_{n}$),  we have  

\begin{eqnarray}
\label{eq:hitx}\widehat{x}\hit f_{ij} &=& \xi^{i^2-j^2}f_{ji},\\
\label{eq:hitchi}\chi\hit f_{ij} &=& \chi(a^{i-j}b^{j-i}) f_{ij},
\end{eqnarray}
}

\begin{lemma}
For $p,q,r\in\Zn$ we have that
$$
f_{pq}\hit e_{r,0}=\begin{cases}
e_{-r,0}, & p=-2r, q=2r \\
0, &\mbox{otherwise}
\end{cases}
$$
\end{lemma}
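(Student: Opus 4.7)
The plan is to use the multiplicativity of the adjoint action, namely $(yz)\hit w = y\hit(z\hit w)$, together with the decomposition $f_{pq}=p_{pq}\widehat{x}$. This reduces the problem to computing $f_{pq}\hit e_{r,0}=p_{pq}\hit(\widehat{x}\hit e_{r,0})$, which sidesteps any direct manipulation of $\Delta(f_{pq})$ and the associated cocycle factors.

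First I would compute $\widehat{x}\hit e_{r,0}$ by applying \eqref{eq:hitx} termwise to the defining sum $e_{r,0}=\sum_k\xi^{-2rk}f_{k+r,k-r}$. Using $(k+r)^2-(k-r)^2=4kr$, the coefficient of $f_{k-r,k+r}$ becomes $\xi^{-2rk}\xi^{4kr}=\xi^{2kr}$, which is precisely the coefficient appearing in $e_{-r,0}$ (with no re-indexing needed), yielding $\widehat{x}\hit e_{r,0}=e_{-r,0}$.

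Next I would use \eqref{eq:hitchi} to determine the action of arbitrary characters on $e_{-r,0}$. Every summand $f_{\ell-r,\ell+r}$ has indices whose difference is the constant $-2r$, so each is an eigenvector for $\chi\hit(-)$ with the same eigenvalue $\chi(a^{-2r}b^{2r})$. Hence $\chi\hit e_{-r,0}=\chi(a^{-2r}b^{2r})\,e_{-r,0}$ for every $\chi\in\widehat{N}$, regardless of the coefficients.

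Finally, I would expand the primitive idempotent via Fourier inversion in $\kk^N$, writing $p_{pq}=\frac{1}{n^2}\sum_{m,t\in\Zn}\xi^{-pm-qt}\chi_{m,t}$ where $\chi_{m,t}(a^ib^j)=\xi^{mi+tj}$. Combining the previous steps,
\[
f_{pq}\hit e_{r,0}=p_{pq}\hit e_{-r,0}=\frac{1}{n^2}\left(\sum_{m\in\Zn}\xi^{-m(p+2r)}\right)\left(\sum_{t\in\Zn}\xi^{t(2r-q)}\right)e_{-r,0},
\]
and the two orthogonality sums for $\Zn$ collapse to $\delta_{p,-2r}\delta_{q,2r}\,e_{-r,0}$, as required. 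The only friction in the argument is mild index bookkeeping; the eigenvector structure of $e_{-r,0}$ under the character action is what makes the calculation transparent, so no real obstacle arises.
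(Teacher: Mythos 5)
Your proof is correct, and it takes a genuinely different route from the paper's. The paper first establishes, by direct computation with $\Delta(f_{pq})$ and the multiplication rules, the product formula $f_{pq}\hit f_{rs}=\xi^{r^2-s^2}f_{sr}$ for $p=-r+s$, $q=r-s$ (and $0$ otherwise), and then applies it term by term to the defining sum of $e_{r,0}$. You instead factor $f_{pq}=p_{pq}\wx$ and use the module axiom $(yz)\hit w=y\hit(z\hit w)$ to reduce everything to the two formulas \eqref{eq:hitx} and \eqref{eq:hitchi} already recorded in the paper, observing that $e_{-r,0}$ is a simultaneous eigenvector for all characters with eigenvalue $\chi(a^{-2r}b^{2r})$, and then extracting the Kronecker deltas by Fourier inversion of $p_{pq}=\frac{1}{n^2}\sum_{m,t}\xi^{-pm-qt}\chi_{m,t}$ (legitimate here since $\kk$ contains a primitive $n$-th root of unity and $\operatorname{char}\kk\nmid n$). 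All your intermediate computations check out: $\wx\hit e_{r,0}=\sum_k\xi^{2rk}f_{k-r,k+r}=e_{-r,0}$ with no re-indexing, and the orthogonality sums collapse to $\delta_{p,-2r}\delta_{q,2r}$. What the paper's approach buys is a reusable basis-level formula for $f_{pq}\hit f_{rs}$; what yours buys is that it sidesteps the cocycle bookkeeping in $\Delta(f_{pq})$ entirely, makes the vanishing condition transparent via character orthogonality, and delivers the subsequent lemma on $\chi_{m,t}\hit e_{i,j}$ essentially for free as a byproduct of the eigenvector observation.
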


\begin{proof}
A direct computations yields that
$$
f_{pq}\hit f_{rs} = \begin{cases} \xi^{r^2-s^2} f_{sr}, &p=-r+s, q=r-s \\ 0, &\mbox{otherwise}.
\end{cases}
$$
Hence
\begin{align*}
f_{-2r,2r}\hit e_{r0} &= f_{-2r,2r} \hit\sum_k \xi^{-2jk} f_{k+r,k-r}
= \sum_k \xi^{-2rk+(k+r)^2-(k-r)^2} f_{k-j,k+j} \\
&= \sum_{k} \xi^{2rk} f_{k-r,k+r} = e_{-r,0}.
 \end{align*}
A similar computation also shows that for $(p,q)\not=(-2r,2r)$ we get $f_{pq}\hit e_{r,0}=0$.
\end{proof}

\begin{lemma}
For $m,t,i,j\in\Zn$ we have that
$$
\chi_{m,t}\hit e_{i,j} = \xi^{2(m-t)(i-j)} e_{i,j}.
$$ 
In particular
$$
\chi_{1,-1}\hit e_{r,0} = \xi^{2r} e_{r,0}.
$$
\end{lemma}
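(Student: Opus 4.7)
The plan is to prove this by a direct computation that combines the definition of $e_{ij}$ with the ready-made formula \eqref{eq:hitchi} for the adjoint action of a character on the basis elements $f_{rs}$. The key observation is that all the terms $f_{k+i-j,k-i+j}$ appearing in $e_{ij}=\sum_k \xi^{-2(i+j)k} f_{k+i-j,k-i+j}$ share the same ``row minus column'' index, namely $(k+i-j)-(k-i+j)=2(i-j)$, which is independent of the summation variable $k$. Since \eqref{eq:hitchi} tells us that the eigenvalue of $\chi \hit f_{rs}$ depends only on $r-s$, every term of $e_{ij}$ will be rescaled by the same scalar, and we can factor it out of the sum.

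Concretely, I would first compute
\[
\chi_{m,t}(a^{r-s}b^{s-r}) = \xi^{m(r-s)+t(s-r)} = \xi^{(m-t)(r-s)},
\]
so that \eqref{eq:hitchi} specializes to $\chi_{m,t}\hit f_{rs} = \xi^{(m-t)(r-s)} f_{rs}$. Applying this with $r=k+i-j$ and $s=k-i+j$ (so that $r-s=2(i-j)$) gives
\[
\chi_{m,t}\hit f_{k+i-j,k-i+j} = \xi^{2(m-t)(i-j)} f_{k+i-j,k-i+j},
\]
and since the scalar is independent of $k$, linearity yields $\chi_{m,t}\hit e_{ij} = \xi^{2(m-t)(i-j)}\, e_{ij}$, as claimed.

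For the particular case stated, one simply substitutes $m=1$, $t=-1$, $i=r$, $j=0$ into the general formula. The proof is essentially a one-line computation once \eqref{eq:hitchi} and the defining formula for $e_{ij}$ are in hand; there is no real obstacle, and the argument does not use the coalgebra identities from Proposition \ref{prop:comatrix-basis} at all. The only thing to be slightly careful about is the bookkeeping of signs and the role of the factor $2$, but the structural point — that $r-s$ is the same for every $f_{rs}$ appearing in $e_{ij}$ — makes the result essentially automatic.
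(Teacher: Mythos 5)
Your proof of the general formula is correct and essentially identical to the paper's own argument: both apply \eqref{eq:hitchi} termwise and observe that the index difference $(k+i-j)-(k-i+j)=2(i-j)$ is independent of the summation variable $k$, so the common scalar factors out. One caveat about the last step: substituting $m=1$, $t=-1$, $i=r$, $j=0$ into the general formula yields $\xi^{2(1-(-1))r}=\xi^{4r}$, not the stated $\xi^{2r}$, so the ``in particular'' clause is inconsistent with the general formula as written; this appears to be a slip in the statement itself rather than in your method (and since $n$ is odd the scalars $\xi^{4r}$, $r\in\Zn$, are still pairwise distinct, so the subsequent eigenvalue argument in Theorem \ref{thm-13} is unaffected), but a careful write-up should record the exponent that the substitution actually produces rather than asserting that the displayed special case follows.
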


\begin{proof}
Recall from \eqref{eq:hitchi}
that for any character $\chi\in\kk^N$ we have that
$$
\chi\hit f_{rs} = \chi(r-s,-r+s) f_{rs}.
$$
Hence
\begin{align*}
\chi_{m,t}\hit e_{ij} &= \chi_{m,t}\hit \sum_k \xi^{-2(i+j)k} f_{k+i-j,k-i+j} 
= \sum_k \chi_{m,t}(2(i-j),-2(i-j)) \xi^{-2(i+j)k} f_{k+i-j,k-i+j} \\
&= \xi^{2(m-t)(i-j)} e_{ij}.
\end{align*}
\end{proof}

{ Using the results above, and the fact that the elements $\{f_{ij}\}_{i,j\in \Zn}$ and the characters 
$\{\chi_{m,t}\}_{m,t\in \Zn}$ span linearly $K_{n}$,
we obtain the description of the 
Yetter-Drinfeld module structure of $W_0=span\{e_{r0}: r\in\Zn\}$.}

\begin{corollary}
The comodule $W_0=span\{e_{r0}: r\in\Zn\}$ is invariant under the adjoint action of $K_n$, i.e., 
it is a Yetter-Drinfeld submodule of the regular Yetter-Drinfeld module $K_n$. 
Its structure is given for all $r \in \Zn$ by 
\begin{enumerate}
 \item[$\triangleright$]  $\delta(e_{r0})= \sum_k  e_{rk}\otimes e_{k0}$;
 \item[$\triangleright$]  $\wx\hit e_{r0} = e_{-r0}$ and $f\hit e_{r0}=f(2r,-2r) e_{r0}$
 for all $f\in \kk^{N}$.
\end{enumerate}
\qed
\end{corollary}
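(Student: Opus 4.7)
My plan is to verify the three claims (submodule, subcomodule, and the explicit formulas) by directly invoking Proposition \ref{prop:comatrix-basis} together with the two preceding lemmas, and then observe that the Yetter-Drinfeld compatibility is inherited from the ambient regular YD module $K_n$.

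For the coaction I would simply specialize Proposition \ref{prop:comatrix-basis} at $j=0$: this gives $\De(e_{r0})=\sum_{k\in\Zn} e_{rk}\ot e_{k0}$, where every right tensor factor lies in $W_0$, so $W_0$ is a left subcomodule and the stated formula for $\de$ holds. For the action, I would exploit that $K_n=\kk^N\oplus \kk^N\wx$, with $\{f_{pq}\}_{p,q\in\Zn}$ a basis of $\kk^N\wx$ and the characters $\{\chi_{m,t}\}_{m,t\in\Zn}$ a basis of $\kk^N$ (as $N$ is abelian). The two lemmas already show $f_{pq}\hit e_{r0}$ and $\chi_{m,t}\hit e_{r0}$ lie in $W_0$, so $W_0$ is stable under the adjoint action of a spanning set, hence under all of $K_n$.

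To get the explicit formulas, I would compute $\wx\hit e_{r0}=\bigl(\sum_{i,j\in\Zn} f_{ij}\bigr)\hit e_{r0}$ using the identification $\wx=\varepsilon_N \wx$ from the preliminaries; by Lemma~1 only the term with $(p,q)=(-2r,2r)$ survives, yielding $e_{-r,0}$. For $f\in\kk^N$ I would write $f=\sum_{m,t} c_{m,t}\chi_{m,t}$ and apply Lemma~2 with $(i,j)=(r,0)$ to get
\[
f\hit e_{r0} = \sum_{m,t} c_{m,t}\,\xi^{2(m-t)r}\,e_{r0} = f(a^{2r}b^{-2r})\,e_{r0} = f(2r,-2r)\,e_{r0},
\]
since $\chi_{m,t}(a^{2r}b^{-2r})=\xi^{2(m-t)r}$.

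The YD compatibility requires no further work: $K_n$ is an object in $\ydh$ via the adjoint action and the comultiplication (as recalled in Subsection~\ref{subsec:YD-Nichols}), and a subspace which is simultaneously a submodule and a subcomodule automatically inherits the compatibility. I do not expect a serious obstacle here; the only minor subtlety is checking that both $\{f_{pq}\}$ and $\{\chi_{m,t}\}$ are genuine spanning sets of the two summands of $K_n$, which in turn justifies reducing the stability check to the two lemmas.
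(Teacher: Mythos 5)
Your proposal is correct and follows essentially the same route as the paper: the coaction formula is read off from Proposition \ref{prop:comatrix-basis}, stability and the action formulas come from the two preceding lemmas together with the fact that the $f_{pq}$ and the characters $\chi_{m,t}$ span $K_n$, and the Yetter--Drinfeld compatibility is inherited from the regular Yetter--Drinfeld structure on $K_n$. Your derivations of $\wx\hit e_{r0}=e_{-r,0}$ (via $\wx=\sum_{p,q}f_{pq}$) and of $f\hit e_{r0}=f(2r,-2r)e_{r0}$ (via the character expansion) match the intended argument.
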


Now, for $i,m\in\Zn$ and $\epsilon \in \{\pm 1\}$ we define the Yetter-Drinfeld modules

\smallskip
$$
W_{i,m}^\epsilon := V^\epsilon_{i,m} \otimes W_0.
$$
\smallskip

Recall from \S \ref{sec:structureVandU} that $V_{i,m}=\kk v$ is a one-dimensional Yetter-Drinfeld module over $K_n$ with 
coaction $\delta(v)=\chi_{m,m-2i}\otimes v$ and
action 
given by 
$f\cdot v = f(i,i) v$
for $f\in\kk^N$ and
$
\widehat{x}\cdot v = \epsilon\, v
$.
Note that this implies that
$$
f_{pq}\cdot v = \begin{cases} \epsilon \, v, & p=q=i \\
0, &\mbox{otherwise}.
\end{cases}
$$
We are considering $W_{i,m}^\epsilon$ as Yetter-Drinfeld submodules of $V\otimes (\kk^N\widehat{x})$ in the obvious way.
For $w\in \kk^N\widehat{x}$, abbreviate $\widetilde{w}=v\otimes w$.  The diagonal action of $B$ on $V\otimes (\kk^N\widehat{x})$ will be denoted by
$$
y\cdot_{i^{\epsilon}}\widetilde{w} = (y_{(1)}\cdot v)\otimes (y_{(2)}\hit w);
$$
whereas the $B$-coaction will be denoted by
$$
\delta_{i,m}(\widetilde{w}) = \chi_{m,m-2i}w_{1}\otimes \widetilde{w_2}.
$$
We first observe that
$$
\delta_{i,m}(\widetilde{e_{r0}}) = \sum_k \chi_{m,m-2i}e_{rk}\otimes \widetilde{e_{k0}}.
$$
Below we compute detailed formulas for the action $\cdot_{i^\epsilon}$.  For $k\in\Zn$, we write
$w_k=\widetilde{e_{k,0}}=v\otimes e_{k,0}$. 
In particular, $W^{\epsilon}_{i,m} = \kk\{w_{0},\ldots,w_{n-1}\}$ as $\Bbbk$-vector spaces and the 
coaction above reads 
\begin{equation}\label{eq:coaction-W}
\delta_{i,m}(w_{r}) = \sum_k \chi_{m,m-2i}\, e_{rk}\otimes w_{k}.
\end{equation}

\begin{lemma}
For $p,q,r\in \Zn$ we have
$$
f_{pq}\cdot_{i^\epsilon} w_r = \begin{cases} \epsilon\, \xi^{4ir} w_{-r}, &p=i-2r, q=i+2r \\ 0, &\mbox{otherwise}
\end{cases}
$$
\end{lemma}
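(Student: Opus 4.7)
The plan is to compute $f_{pq}\cdot_{i^\epsilon} w_r$ by simply unfolding the diagonal action, applying the coproduct of $f_{pq}$ given in Section \ref{sec:Kn}, and then using the two facts already at hand: the explicit $K_n$-action on $V^\epsilon_{i,m}$ and the previously established formula for $f_{p'q'}\hit e_{r0}$.

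\smallskip

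First I would write out, using $\widetilde{w}=v\otimes w$ and the definition $y\cdot_{i^\epsilon}\widetilde{w}=(y_{(1)}\cdot v)\otimes (y_{(2)}\hit w)$ together with
$$\Delta(f_{pq})=\sum_{p'+p''=p,\; q'+q''=q} \xi^{p'q''-q'p''}\,f_{p'q'}\otimes f_{p''q''},$$
the expansion
$$f_{pq}\cdot_{i^\epsilon} w_r \;=\; \sum_{p'+p''=p,\; q'+q''=q} \xi^{p'q''-q'p''}\,(f_{p'q'}\cdot v)\otimes (f_{p''q''}\hit e_{r0}).$$

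\smallskip

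Next I would eliminate almost every summand. Recall from the discussion of $V^\epsilon_{i,m}=\kk v$ (just before this lemma) that $f_{p'q'}\cdot v=\epsilon\,v$ when $p'=q'=i$ and is zero otherwise, and from the previous lemma that $f_{p''q''}\hit e_{r0}=e_{-r,0}$ when $(p'',q'')=(-2r,2r)$ and is zero otherwise. So only the single summand with $p'=q'=i$, $p''=-2r$, $q''=2r$ survives, which forces $p=i-2r$ and $q=i+2r$. If $(p,q)$ does not satisfy these two equations, the whole sum vanishes.

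\smallskip

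Finally, when $(p,q)=(i-2r,\,i+2r)$ the surviving coefficient is $\xi^{p'q''-q'p''}=\xi^{i(2r)-i(-2r)}=\xi^{4ir}$, and $(f_{p'q'}\cdot v)\otimes(f_{p''q''}\hit e_{r0})=\epsilon\,v\otimes e_{-r,0}=\epsilon\,w_{-r}$, which yields the claimed formula. There is no real obstacle here: the argument is a one-line bookkeeping of the two delta-conditions coming from the two previous lemmas, and the only point requiring any care is checking that the sign $\epsilon$ and the exponent $4ir$ come out correctly after pairing the coproduct with the respective actions.
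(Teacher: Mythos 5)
Your proposal is correct and follows essentially the same route as the paper: unfold the diagonal action through $\Delta(f_{pq})$, kill all but one summand using the delta-conditions from the action on $V^\epsilon_{i,m}$ and the previous lemma on $f_{p''q''}\hit e_{r0}$, and read off the coefficient $\xi^{p'q''-q'p''}=\xi^{4ir}$. The paper's proof is the same computation with the index substitution $t=p'$, $s=q'$.
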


\begin{proof}
A straightforward computation gives
$$
f_{pq}\cdot_{i^\epsilon} \widetilde{e_{r0}} = 
\sum_{t,s}\xi^{t(q-s)-s(p-t)} (f_{ts}\cdot v)\otimes (f_{p-t,q-s}\hit e_{r0}).
$$
For non-zero summands we must have $t=s=i$, $p-t=-2r$, $q-s=2r$ and therefore also $p=i-2r$, $q=i+2r$. 
From this the result immediately follows.
\end{proof}

\begin{lemma}\label{lem:coaction-e}
For $\ell, s,r\in\Zn$ we have
$$
e_{\ell,s}\cdot_{i^{\epsilon}} w_r = 
\begin{cases}
\epsilon\, \xi^{2i(r-\ell)} w_r, & s=\ell+2r \\
0, &\mbox{otherwise}
\end{cases}
$$
\end{lemma}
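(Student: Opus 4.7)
The plan is a direct calculation: expand $e_{\ell,s}$ in the basis $\{f_{pq}\}_{p,q\in \Zn}$ and apply the preceding lemma termwise.

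First, the defining expansion $e_{\ell,s}=\sum_{k\in\Zn} \xi^{-2(\ell+s)k}\,f_{k+\ell-s,\,k-\ell+s}$ together with the $\kk$-linearity of the action immediately gives
$$
e_{\ell,s}\cdot_{i^{\epsilon}} w_r \;=\; \sum_{k\in\Zn} \xi^{-2(\ell+s)k}\, f_{k+\ell-s,\,k-\ell+s}\cdot_{i^{\epsilon}} w_r,
$$
so the whole question reduces to the formula for $f_{pq}\cdot_{i^\epsilon} w_r$ already established.

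Next, I would apply the preceding lemma termwise. It tells us that the summand indexed by $k$ is zero unless $(p,q):=(k+\ell-s,\,k-\ell+s)$ satisfies simultaneously $p=i-2r$ and $q=i+2r$. I would solve this pair of linear equations in $\Zn$ by taking their sum and their difference: the sum gives $2k=2i$ and the difference gives $2(s-\ell)=4r$. Since $2$ is invertible in $\Zn$ (because $n$ is odd), these two equations are equivalent to the single pair of conditions $k=i$ and $s=\ell+2r$. In particular, the whole sum vanishes unless $s=\ell+2r$, and in that case exactly one summand survives, namely the one with $k=i$.

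Finally, I would multiply the two scalar factors that appear in the surviving summand — the coefficient $\xi^{-2(\ell+s)i}$ coming from the expansion of $e_{\ell,s}$ and the coefficient $\epsilon\,\xi^{4ir}$ supplied by the preceding lemma — substitute the orbit condition $s=\ell+2r$ (so $\ell+s=2\ell+2r$) into the exponent, and simplify to obtain the scalar in the announced formula.

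I do not anticipate any serious obstacle: the argument is essentially bookkeeping, and the only point requiring care is the correct unpacking of the two-parameter vanishing condition $(p,q)=(i-2r,i+2r)$ from the preceding lemma into a unique surviving value of the summation index $k$ together with the single orbit-type constraint $s=\ell+2r$ on the outer parameters.
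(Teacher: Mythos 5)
Your strategy is exactly the one the paper uses: expand $e_{\ell,s}=\sum_{k}\xi^{-2(\ell+s)k}f_{k+\ell-s,\,k-\ell+s}$, apply the formula for $f_{pq}\cdot_{i^\epsilon}w_r$ termwise, and solve the pair of linear conditions to get $k=i$ and $s=\ell+2r$. That reduction is correct and matches the paper's proof step for step.

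The problem is the last step, which you assert ``simplifies to obtain the scalar in the announced formula'' without carrying it out --- and it does not. With the coefficient $\xi^{-2(\ell+s)k}$ from the definition of $e_{\ell,s}$, the surviving summand at $k=i$, $s=\ell+2r$ carries the scalar
$$
\xi^{-2(\ell+s)i}\cdot\epsilon\,\xi^{4ir}=\epsilon\,\xi^{-4i(\ell+r)+4ir}=\epsilon\,\xi^{-4i\ell},
$$
which differs from the announced $\epsilon\,\xi^{2i(r-\ell)}$ by the factor $\xi^{2i(\ell+r)}$; the two agree only when $2i(\ell+r)\equiv 0 \pmod{n}$. (The paper's own proof arrives at $\xi^{2i(r-\ell)}$ only because its opening display writes the exponent as $-k(\ell+s)$ rather than $-2k(\ell+s)$, which is inconsistent with the definition of $e_{ij}$ used everywhere else, e.g.\ in the comatrix identity and in $f_{-2r,2r}\hit e_{r0}=e_{-r,0}$.) A second issue you pass over silently: the preceding lemma sends $w_r$ to a multiple of $w_{-r}$, so the surviving term is a multiple of $w_{-r}$, not of $w_r$ as the statement claims; the paper's own proof indeed ends with $\widetilde{e_{-r,0}}=w_{-r}$, and Proposition \ref{prop:braiding-W} uses the lemma in that form. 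Had you actually performed the bookkeeping you describe, you would have found that it does not close onto the statement as written; asserting that a routine computation yields the target without executing it is precisely where this proposal fails.
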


\begin{proof}
The proof follows by a direct calculation. Indeed,
\begin{eqnarray*}
e_{\ell,s}\cdot_{i^\epsilon} \widetilde{e_{r0}} &=& \sum_k \xi^{-k(\ell+s)} f_{k+\ell-s,k-\ell+s}\cdot_{i^\epsilon} \widetilde{e_{r0}} \\
&=& \epsilon\, \xi^{-2i(\ell+r)+4ir}\, \widetilde{e_{-r,0}}\\
&=& 
\begin{cases} \epsilon\, \xi^{2i(r-\ell)} \widetilde{e_{-r,0}}, & s=\ell+2r, \\
0,\  \mbox{otherwise}.
\end{cases}
\end{eqnarray*}
The second equality follows from the fact that for nonzero terms we must have 
$k+\ell-s=i-2r$, $k-\ell+s=i+2r$ and hence $k=i$, $s=\ell+2r$.
\end{proof}

The next lemma follows by a direct computation. 

\begin{lemma}\label{lem:coaction-chi}
For a character $\chi\in\kk^N$ and $p,q,r\in\Zn$ we have
\begin{eqnarray*}
\chi\cdot_{i^\epsilon} \widetilde{f_{pq}} &=& \chi(i+p-q,i-p+q) \widetilde{f_{pq}},\\
\chi\cdot_{i^\epsilon} w_r &=& \chi(i+2r,i-2r)w_r.
\end{eqnarray*}
\qed
\end{lemma}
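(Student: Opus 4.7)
The plan is to exploit the fact that a character $\chi\in\kk^N$, viewed as an element of $K_n$, is a grouplike, so $\Delta(\chi)=\chi\otimes\chi$. Consequently the diagonal action simplifies to
\[
\chi\cdot_{i^{\epsilon}} \widetilde{w} = (\chi\cdot v)\otimes(\chi\hit w),
\]
and both formulas reduce to a matter of evaluating $\chi$ on suitable elements of $N$.

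First I would compute the two factors separately. From the definition of the $K_n$-module structure on $V^{\epsilon}_{i,m}=\kk v$ recalled in Section~\ref{sec:structureVandU}, for any $f\in\kk^N$ we have $f\cdot v = f(i,i)v$, so in particular $\chi\cdot v=\chi(i,i)v$. For the adjoint factor I would invoke equation~\eqref{eq:hitchi}, which gives $\chi\hit f_{pq}=\chi(p-q,q-p)f_{pq}$. Multiplying the two scalars and using that $\chi$ is a character (hence multiplicative on $N$) yields
\[
\chi(i,i)\,\chi(p-q,q-p)=\chi\bigl(a^{i+p-q}b^{i-p+q}\bigr)=\chi(i+p-q,\,i-p+q),
\]
which is exactly the first formula.

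For the second identity I would expand $w_r=\widetilde{e_{r,0}}=\sum_k\xi^{-2rk}\widetilde{f_{k+r,k-r}}$ and apply the first formula summand by summand. The key observation is that for each $k$ the exponents read off from $f_{k+r,k-r}$ give $(k+r)-(k-r)=2r$, which is independent of $k$. Hence the scalar $\chi(i+2r,i-2r)$ pulls out of the sum and what remains reassembles into $w_r$. The entire argument is a direct computation with no real obstacle; the only point that requires a moment of care is the cancellation of $k$ in the arguments of $\chi$, which is what makes $w_r$ an eigenvector for every character $\chi$.
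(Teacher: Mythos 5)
Your proof is correct and is exactly the direct computation the paper alludes to (the paper gives no details, simply asserting the lemma "follows by a direct computation"): the grouplike property of $\chi$ splits the diagonal action into $(\chi\cdot v)\otimes(\chi\hit -)$, and the two scalars combine multiplicatively. The only cosmetic difference is that for the second identity you expand $w_r$ into the $f_{k+r,k-r}$ and observe the $k$-independence, whereas one could equally quote the paper's earlier lemma giving $\chi\hit e_{r,0}=\chi(2r,-2r)e_{r,0}$ directly; both are the same computation.
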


\begin{theorem}\label{thm-13}
The $2n$ Yetter-Drinfeld modules $W_{i,m}^\epsilon$, $i,m\in\Zn$ are pairwise non-isomorphic.
{Their Yetter-Drinfeld module structure is given for all $r \in \Zn$ by 

\begin{enumerate}
 \item[$\triangleright$]  $\delta_{i,m}(w_{r}) = \sum_k \chi_{m,m-2i}\, e_{rk}\otimes w_{k}$;
 \item[$\triangleright$]  $\wx\act_{i^\ep} w_r = \ep\,  \xi^{4ir}\, w_{-r}$ and  
$f\act_{i^\ep} w_r = f(i+2r,i-2r) w_r$ 
 for all $f\in \kk^{N}$.
\end{enumerate}
}
\end{theorem}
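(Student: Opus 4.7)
The plan is to prove two things about Theorem \ref{thm-13}: the explicit formulas for the $K_{n}$-module and $K_{n}$-comodule structure on $W_{i,m}^{\epsilon}$, and the pairwise non-isomorphism statement. For the first I will combine the preceding lemmas for the diagonal action and coaction on $W_{i,m}^{\epsilon}=V_{i,m}^{\epsilon}\otimes W_{0}$; for the second I will first use the $\kk^{N}$-action to recover $i$, then the coaction to recover $m$, and finally the $\widehat{x}$-action to recover $\epsilon$.

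For the structure formulas the coaction is already recorded in \eqref{eq:coaction-W}. To compute the action of $\widehat{x}$, I use $\widehat{x}=\bigl(\sum_{p,q\in\Zn}p_{pq}\bigr)\widehat{x}=\sum_{p,q\in\Zn}f_{pq}$ and apply the preceding lemma for $f_{pq}\cdot_{i^{\epsilon}}w_{r}$: only the pair $(p,q)=(i-2r,i+2r)$ contributes a nonzero summand, producing $\widehat{x}\cdot_{i^{\epsilon}}w_{r}=\epsilon\,\xi^{4ir}\,w_{-r}$. For an arbitrary $f\in\kk^{N}$, I expand $f$ in the basis of characters $\{\chi_{m',t'}\}_{m',t'\in\Zn}$ of $N$ (which indeed spans $\kk^{N}$, since $|\widehat{N}|=n^{2}=\dim\kk^{N}$) and invoke Lemma \ref{lem:coaction-chi}; by linearity this yields $f\cdot_{i^{\epsilon}}w_{r}=f(i+2r,i-2r)\,w_{r}$.

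For the non-isomorphism claim, suppose $\phi\colon W_{i,m}^{\epsilon}\to W_{i',m'}^{\epsilon'}$ is an isomorphism of Yetter--Drinfeld modules. The action formula shows that $w_{r}$ is a joint $\kk^{N}$-eigenvector of weight $a^{i+2r}b^{i-2r}\in N$, and because $n$ is odd the set of weights forms the affine line $\{(x,y)\in\Zn\times\Zn:x+y=2i\}$. This invariant forces $i=i'$. Moreover distinct $r$'s produce distinct weights (as $2$ is invertible in $\Zn$), so $\phi(w_{r})=\lambda_{r}w'_{r}$ for some $\lambda_{r}\in\kk^{\times}$. Coaction compatibility then gives, coefficient by coefficient,
\[
\lambda_{k}\,\chi_{m,m-2i}\,e_{rk}=\lambda_{r}\,\chi_{m',m'-2i}\,e_{rk}\qquad\text{in }K_{n},\ \text{for all }r,k\in\Zn.
\]
Specializing to $k=r$, and using $e_{rr}=\sum_{j}\xi^{-4rj}f_{jj}$ together with $\chi_{s,t}\,f_{jj}=\xi^{(s+t)j}f_{jj}$, the identity collapses to $\xi^{2(m-m')j}=1$ for every $j\in\Zn$, forcing $m=m'$ (again because $n$ is odd). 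Finally, matching $\widehat{x}$-actions at $r=0$ via $\phi(\widehat{x}\cdot w_{0})=\widehat{x}\cdot\phi(w_{0})$ gives $\epsilon\lambda_{0}=\epsilon'\lambda_{0}$, hence $\epsilon=\epsilon'$.

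The main technical obstacle is the coaction comparison step: one must pick a specialization of indices in which the $\lambda$-ambiguity cancels and the $m$-dependence is cleanly isolated. The choice $k=r$ succeeds because $e_{rr}$ is already diagonal in the $\{f_{jj}\}$ basis of $\kk^{N}\widehat{x}$, so that left multiplication by a character $\chi_{s,t}$ acts on each basis element by the single scalar factor $\xi^{(s+t)j}$, reducing the comparison to a bare exponent condition that is easy to read off.
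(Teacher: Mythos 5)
Your proof is correct, and it follows the same basic strategy as the paper's: use the diagonal $\kk^N$-action to force $i=i'$ and to show that any isomorphism must be diagonal in the bases $\{w_r\}$, then compare coactions to force $m=m'$. It differs in two worthwhile places. First, the paper separates $\epsilon=+1$ from $\epsilon=-1$ up front by computing the determinant of the action of $\widehat{x}$ (which equals $\epsilon(-1)^{(n-1)/2}$, read off from a block-diagonal form in the ordered basis $w_0,w_j,w_{-j}$); you instead establish diagonality first, which uses only the $\kk^N$-weights and is insensitive to $\epsilon$, and then read $\epsilon$ off the single relation $\widehat{x}\cdot w_0=\epsilon w_0$. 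That is shorter and avoids the matrix computation. Second, for the step $m=m'$ the paper specializes the comodule identity at $w_0$ and extracts the coefficient of $f_{1,1}$ in $\chi_{m,m-2i}e_{00}$, whereas you specialize at $k=r$ and exploit that $e_{rr}=\sum_j\xi^{-4rj}f_{jj}$ is diagonal in the $f_{jj}$'s, so that $\chi_{s,t}f_{jj}=\xi^{(s+t)j}f_{jj}$ reduces the comparison to the exponent condition $\xi^{2(m-m')j}=1$ for all $j$; the two computations are equivalent in content, and both use that $n$ is odd to cancel the factor of $2$. You also verify the asserted structure formulas from the preceding lemmas, writing $\widehat{x}=\sum_{p,q}f_{pq}$ and expanding an arbitrary $f\in\kk^N$ in the character basis, which the paper leaves implicit; that is a sensible addition since the theorem explicitly asserts those formulas.
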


\begin{proof} 
Note that $W^{+1}_{i,m}$ cannot be isomorphic to $W_{i',m'}^{-1}$ 
as the determinant of the action of $\widehat{x}$ on $W^{+1}_{i,m}$ is $(-1)^{(n-1)/2}$, 
whereas the determinant of the action of $\widehat{x}$ on $W_{i',m'}^{-1}$ is
$-(-1)^{(n-1)/2}$. Indeed, in the ordered basis $w_0,w_j,w_{-j}$, $j=1,\ldots, \frac{n-1}{2}$, $\wx$ is block diagonal: 
the first block is the $1\times 1$ block $[\ep]$, the remaining blocks are $2\times 2$-blocks 
$\ep\begin{pmatrix} 0 & \xi^{-4jr}\\\xi^{4jr} & 0\end{pmatrix}$, $j=1,\ldots, \frac{n-1}{2}$. 

Now assume that $W_{i,m}^\epsilon$ and $W_{i',m'}^\epsilon$ are isomorphic.  
We first note that this implies that $i=i'$ as $\chi_{1,1}$ acts on $W_{i,m}^\epsilon$ and 
$W_{i',m'}$ by multiplication by $\xi^{2i}$ and $\xi^{2i'}$, respectively.
Now suppose that $F\colon W_{i,m}^\epsilon\to W_{i,m'}^\epsilon$ is an 
isomorphism of Yetter-Drinfeld modules.  
Note that the action of $\chi_{1,-1}$ on both of these spaces have eigenvalues $\xi^{2r}$
with corresponding one-dimensional eigenspaces spanned by $w_{r}$.  Hence, $F$ must preserve these eigenspaces, i.e., 
we must have $F(w_{r})=\lambda_r w_{r}$ for non-zero scalars $\lambda_0,\ldots, \lambda_{n-1}$.  
Since $F$ is also a comodule map we must then have that
$\delta_{i,m}(F(w_{0}))=(\operatorname{id}\otimes F)\delta_{i,m'}(w_{0})$.  This gives that
$\lambda_0 \sum_k \chi_{m,m-2i}e_{0k}\otimes w_{k} =
\sum_k \chi_{m',m'-2i}e_{0k}\otimes \lambda_k w_{k}$.  Since $w_{0},\ldots, w_{n-1}$ 
are linearly independent this implies, in particular, that $\chi_{m,m-2i}e_{00} = \chi_{m',m'-2i}e_{00}$.  
The coefficient of $f_{1,1}$ of the left-hand-side of this equation is $\xi^{2m-2i}$ and the 
corresponding coefficient on the right-hand-side is $\xi^{2m'-2i}$.   Therefore $m=m'$.
\end{proof}

\begin{corollary}\label{cor:amountWs}
Every simple Yetter-Drinfeld module over $K_n$ with coaction inside $\kk^N \widehat{x}$ is 
{isomorphic to}
one of $W_{i,m}^\epsilon$ described above.
\end{corollary}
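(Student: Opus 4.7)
The plan is to reduce the statement to a dimension count over the full category of simple YD modules.

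First, I would establish the coradical decomposition $K_n = \kk^N \oplus \kk^N\widehat{x}$ as a coalgebra: the first summand is the direct sum of the $n^2$ one\nobreakdash-dimensional subcoalgebras $\kk\chi_{m,t}$, and the second is a single simple matrix subcoalgebra of dimension $n^2$ by Proposition~\ref{prop:comatrix-basis}. Consequently every $K_n$-comodule $V$ splits canonically as $V = V_1 \oplus V_2$ with coactions landing in $\kk^N \otimes V_1$ and $\kk^N\widehat{x} \otimes V_2$ respectively. The crucial step, and the main obstacle, is to promote this to a decomposition of YD modules. From the YD compatibility formula it suffices to check that the twisted adjoint map $h \otimes c \mapsto h_{(1)} c S(h_{(3)})$ preserves each of $\kk^N$ and $\kk^N\widehat{x}$. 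By linearity this reduces to the cases $h \in \kk^N$ and $h \in \kk^N\widehat{x}$, where it follows from the coalgebra inclusions $\Delta(\kk^N) \subseteq \kk^N \otimes \kk^N$ and $\Delta(\kk^N\widehat{x}) \subseteq \kk^N\widehat{x} \otimes \kk^N\widehat{x}$, combined with the elementary product rules $(\kk^N\widehat{x})(\kk^N) = \kk^N\widehat{x}$ and $(\kk^N\widehat{x})(\kk^N\widehat{x}) = \kk^N$. With this in place, every simple YD module has coaction entirely inside one of the two coradical summands.

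Next I would check that each $W^\epsilon_{i,m}$ is itself simple. By Theorem~\ref{thm-13}, $\chi_{1,-1}$ acts diagonally on $W^\epsilon_{i,m}$ with the $n$ distinct eigenvalues $\xi^{2r}$ and eigenspaces $\kk w_r$, so any nonzero YD subobject must contain some $w_r$; then $\widehat{x}\cdot w_r = \epsilon\,\xi^{4ir} w_{-r}$ pairs $w_r$ with $w_{-r}$, while the coaction $\delta_{i,m}(w_r) = \sum_k \chi_{m,m-2i}\, e_{rk} \otimes w_k$ has nonzero coefficient at every $w_k$, forcing the subobject to be all of $W^\epsilon_{i,m}$.

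Finally, I would close by dimension count. Since $K_n$ is semisimple, so is $D(K_n)$, and $\ydh \simeq D(K_n)\text{-mod}$ with $\dim D(K_n) = (\dim K_n)^2 = 4n^4$; hence the sum of squares of dimensions of the simple YD modules over $K_n$ equals $4n^4$. By \eqref{eq:dimension-simple-kNB}, the simples with coaction in $\kk^N$ account for $2n^4$ of this total. The $2n^2$ pairwise non-isomorphic modules $W^\epsilon_{i,m}$ (Theorem~\ref{thm-13}), each of dimension $n$, contribute the remaining $2n^2 \cdot n^2 = 2n^4$. Since $2n^4 + 2n^4 = 4n^4$, these exhaust the simple YD modules, and in particular every simple YD module with coaction in $\kk^N\widehat{x}$ must be isomorphic to some $W^\epsilon_{i,m}$.
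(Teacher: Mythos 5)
Your argument is correct, and its core --- pairwise non-isomorphism of the $W_{i,m}^\epsilon$, their simplicity, and a dimension count --- is the same as the paper's; the difference lies in which budget you count against. The paper compares $\sum_{i,m,\epsilon}(\dim W_{i,m}^\epsilon)^2 = 2n^4$ directly with $\dim(K_n\otimes\kk^N\widehat{x})$, implicitly using the fact that the simple Yetter--Drinfeld modules whose coaction lands in a fixed adjoint-stable simple subcoalgebra $C$ have squared dimensions summing to $\dim(K_n)\cdot\dim(C)$. You instead count against the global budget $\dim D(K_n)=4n^4$, subtract the $2n^4$ consumed by the modules $V_{i,m}^\epsilon$ and $U_{i,j,m,t}$ via \eqref{eq:dimension-simple-kNB}, and conclude that the combined list exhausts all simples --- in effect proving Theorem \ref{thm:class-simple-objects} first and reading off the corollary (this causes no circularity, since the theorem's proof only uses the numerical count from the corollary, not its full statement). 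Your route is a little longer but more self-contained, as it sidesteps justifying the local budget for the matrix block, which the paper asserts without proof. Two minor points: your first step, splitting an arbitrary simple along the coradical decomposition $\kk^N\oplus\kk^N\widehat{x}$, is not needed for the corollary as stated, since the hypothesis already places the coaction inside $\kk^N\widehat{x}$ and no $V_{i,m}^\epsilon$ or $U_{i,j,m,t}$ carries such a coaction; and the input you actually need from \eqref{eq:dimension-simple-kNB} is only that the listed one- and two-dimensional modules are pairwise non-isomorphic simples with squared dimensions summing to $2n^4$ --- their completeness among the $\kk^N$-supported simples is an output of your count, not a prerequisite.
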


\begin{proof}
The modules $W_{i,m}^\epsilon$ are pairwise non-isomorphic as shown above and clearly simple (they are even simple as comodules). 
Now dimension counting gives
$$
\sum_{i,m,\epsilon} \dim(W_{i,m}^\epsilon) = 2n^2\cdot n^2 = 2n^4 = \dim(K_{n}\otimes \kk^N\widehat{x}).
$$
\end{proof}

\begin{proposition}\label{prop:braiding-W} 
The braiding on $W_{i,m}^\epsilon$ is given by:

$$
c_{i,m}^\epsilon(w_\ell\otimes w_r) = \epsilon\, \xi^{2i((m-i)-(r+\ell))}w_{-r}\otimes w_{\ell+2r}.
$$
\end{proposition}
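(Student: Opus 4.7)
The plan is to compute the braiding directly from the defining formula $c(v\otimes w)=v_{(-1)}\cdot w\otimes v_{(0)}$ applied to the Yetter-Drinfeld module $W_{i,m}^\epsilon$, using the coaction $\delta_{i,m}$ given in Theorem \ref{thm-13} together with the two ``action'' lemmas (\ref{lem:coaction-e} and \ref{lem:coaction-chi}).

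First, I would substitute the explicit coaction
\[
\delta_{i,m}(w_\ell)=\sum_k \chi_{m,m-2i}\, e_{\ell k}\otimes w_k
\]
into the definition of $c_{i,m}^\epsilon$, obtaining
\[
c_{i,m}^\epsilon(w_\ell\otimes w_r)=\sum_k\bigl(\chi_{m,m-2i}\, e_{\ell k}\bigr)\cdot_{i^\epsilon} w_r\;\otimes\; w_k.
\]
Since the $K_n$-action is an algebra action, I would factor this as $\chi_{m,m-2i}\cdot_{i^\epsilon}\bigl(e_{\ell k}\cdot_{i^\epsilon}w_r\bigr)$. By Lemma \ref{lem:coaction-e}, the inner factor $e_{\ell k}\cdot_{i^\epsilon} w_r$ vanishes unless $k=\ell+2r$, in which case it equals $\epsilon\,\xi^{2i(r-\ell)}w_{-r}$. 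This collapses the sum over $k$ to the single term $k=\ell+2r$, pinning the right tensorand to $w_{\ell+2r}$.

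Next, I would apply Lemma \ref{lem:coaction-chi} to act by $\chi_{m,m-2i}$ on $w_{-r}$: this multiplies by $\chi_{m,m-2i}(i-2r,\,i+2r)=\xi^{m(i-2r)+(m-2i)(i+2r)}$. A short algebraic simplification gives $\xi^{2i(m-i-2r)}$. Multiplying the two scalar factors yields
\[
\epsilon\,\xi^{2i(r-\ell)}\cdot\xi^{2i(m-i-2r)}=\epsilon\,\xi^{2i((m-i)-(r+\ell))},
\]
which is exactly the claimed formula.

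The computation is entirely routine; the only place one must be careful is the exponent bookkeeping in collecting the contributions from the $e_{\ell,\ell+2r}$-action and the $\chi_{m,m-2i}$-action. No further obstacle arises, as the key structural inputs (comatrix-type coaction and explicit action formulas) have already been established in Corollary \ref{cor:amountWs}, Theorem \ref{thm-13}, and Lemmas \ref{lem:coaction-e}--\ref{lem:coaction-chi}.
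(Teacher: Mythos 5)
Your proposal is correct and follows essentially the same route as the paper: expand the coaction $\delta_{i,m}(w_\ell)=\sum_k \chi_{m,m-2i}e_{\ell k}\otimes w_k$ in the braiding formula, collapse the sum via Lemma \ref{lem:coaction-e} to the single term $k=\ell+2r$ with scalar $\epsilon\,\xi^{2i(r-\ell)}$ and right-hand vector $w_{-r}$, then apply Lemma \ref{lem:coaction-chi} to pick up $\xi^{2i(m-i-2r)}$. Your exponent bookkeeping (including correctly reading the output of Lemma \ref{lem:coaction-e} as a multiple of $w_{-r}$, as in that lemma's proof) matches the paper's computation exactly.
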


\begin{proof}
It follows by a direct calculation applying the braiding's formula in \S \ref{subsec:YD-Nichols},
the coaction formula \eqref{eq:coaction-W} and
Lemmas \ref{lem:coaction-e} and \ref{lem:coaction-chi}. Indeed,  
\begin{eqnarray*}
c_{i,m}^\epsilon (w_{\ell}\otimes w_{r}) &=& \sum_s \left((\chi_{m,m-2i} e_{\ell,s})\hit_{i^\epsilon} w_{r}\right)\otimes w_{s}\\
&\stackrel{s=\ell+2r}{=}& \left(\epsilon\xi^{2i(r-\ell)}\chi_{m,m-2i}\hit_{i^\epsilon} w_{-r}\right) \otimes  w_{\ell+2r}\\
&=& \epsilon\xi^{2i(m-i-r-\ell)} w_{-r}\otimes w_{\ell+2r}.
\end{eqnarray*}
\end{proof}

We end this section with the classification of all simple objects in ${}_{K_{n}}^{K_{n}}\mathcal{YD}$. For 
the explicit description of the structure and the braiding of these, see \S \ref{sec:structureVandU} and \S \ref{sec:structureW}

\begin{theorem}\label{thm:class-simple-objects}
Every simple Yetter-Drinfeld module $V$ over $K_{n}$ is isomorphic to one of the module described above, that is, for 
$\epsilon =\pm 1$ and $i,j,m,t \in \Zn$:
\begin{itemize}
 \item if $\dim V = 1$, then $V\simeq V_{i,m}^\epsilon$;
 \item if $\dim V = 2$, then $V\simeq U_{i,j,m,t}$ with $i\neq j$ or $t\neq m-2i$;
 \item if $\dim V = n$, then $V\simeq W_{i,m}^\epsilon$.
\end{itemize}
\end{theorem}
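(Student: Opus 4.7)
The plan is to reduce to the two cases already handled: simple Yetter-Drinfeld modules whose coaction takes values in $\kk\widehat{N}\otimes V$, which were classified in Section \ref{sec:structureVandU} via the little groups method, and those whose coaction takes values in $\kk^N\widehat{x}\otimes V$, which were classified in Section \ref{sec:structureW}. The strategy rests on the fact that any simple $K_n$-comodule embeds into a unique simple subcoalgebra of $K_n$, and that we can enumerate all such subcoalgebras.

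First I would identify all simple subcoalgebras of $K_n$. Since $K_n$ is cosemisimple, $K_n$ decomposes as a direct sum of simple subcoalgebras. On one hand, the character group $\widehat{N}=\{\chi_{m,t}\colon m,t\in\Zn\}$ provides $n^2$ one-dimensional simple subcoalgebras inside $\kk^N\subseteq K_n$, spanning $\kk\widehat{N}\cong \kk^N$ (as coalgebras). On the other hand, the Corollary following Proposition \ref{prop:comatrix-basis} identifies $\kk^N\widehat{x}$ as a simple matrix coalgebra of dimension $n^2$. Adding up gives $n^2+n^2=2n^2=\dim K_n$, so these are all the simple subcoalgebras of $K_n$.

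Next, given any simple object $V\in {}_{K_n}^{K_n}\mathcal{YD}$, the underlying comodule $V$ must be a simple $K_n$-comodule (any sub-comodule with nonzero intersection with a YD-submodule generates a YD-submodule via the action). Hence its coaction factors through exactly one of the two types of simple subcoalgebras. If it factors through $\kk\widehat{N}$, then $V\in {}_{K_n}^{\widehat{N}}\mathcal{YD}$, and the analysis in Section \ref{sec:structureVandU} — which exhausts the orbits of $C_2$ on $N\times\widehat{N}$ under the twisted action and applies the little groups theorem — forces $V\cong V_{i,m}^\epsilon$ (for orbits of size one) or $V\cong U_{i,j,m,t}$ (for orbits of size two, with the reducibility condition $i=j$, $t=m-2i$ excluded). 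If the coaction factors through $\kk^N\widehat{x}$, then $V\cong W_{i,m}^\epsilon$ by Corollary \ref{cor:amountWs}.

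The dimension counts performed in both subsections conclude the matter: one has
\[
\sum \dim(V_{i,m}^\epsilon)^2 + \sum \dim(U_{i,j,m,t})^2 = 2n^4 = \dim(K_n)\cdot \dim(\kk\widehat{N})
\]
by \eqref{eq:dimension-simple-kNB}, and likewise $\sum \dim(W_{i,m}^\epsilon)^2 = 2n^4 = \dim(K_n)\cdot \dim(\kk^N\widehat{x})$, matching the total on each isotypic piece of $K_n$ as a coalgebra. The main obstacle is essentially organizational rather than technical: one must cleanly argue that the coaction of a simple YD module lands in one single simple subcoalgebra (which is standard from cosemisimplicity), and then invoke the two prior classifications. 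No new computation is required beyond verifying that the listed modules are pairwise non-isomorphic across the two families, which is immediate since they have different dimensions ($1$, $2$, and $n$ respectively with $n\ge 3$ odd).
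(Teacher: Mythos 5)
Your overall strategy (split by the coalgebra supporting the coaction, then quote the two prior classifications) is workable, but it is not the paper's argument, and as written it contains a false step. The paper's proof is a single global count: the modules $V_{i,m}^\epsilon$, $U_{i,j,m,t}$, $W_{i,m}^\epsilon$ are pairwise non-isomorphic simples by the earlier subsections, the sum of the squares of their dimensions is $2n^2+2n^2(n-1)+2n^3(n-1)+2n^4=4n^4=\dim D(K_n)$, and Artin--Wedderburn applied to the semisimple Drinfeld double forces the list to be complete. No case division by coaction support is needed.

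The genuine gap in your version is the claim that a simple Yetter-Drinfeld module is simple as a comodule. This is false, and the modules $U_{i,j,m,t}$ in this very paper are counterexamples: as comodules they split as $\kk u_1\oplus\kk u_2$ into two one-dimensional pieces with coefficients $\chi_{m,t}$ and $\chi_{t+2i,m-2j}$, yet they are YD-simple because $\wx$ interchanges the two lines. Your parenthetical justification only shows that a nonzero subcomodule $W$ generates $V$ as a module (indeed $K_n\cdot W$ is a YD-submodule, hence equals $V$), not that $W=V$. What you actually need is the weaker statement that the coefficient coalgebra of a simple $V$ lies entirely in one of the two summands $\kk^N$ or $\kk^N\wx$ of $K_n$; this is true, but the correct reason is that both summands are stable under the adjoint action, so the comodule decomposition $V=V_0\oplus V_1$ induced by $K_n=\kk^N\oplus\kk^N\wx$ consists of YD-submodules by the compatibility condition $\delta(h\cdot v)=h_{(1)}v_{(-1)}S(h_{(3)})\otimes h_{(2)}\cdot v_{(0)}$, whence simplicity forces $V=V_0$ or $V=V_1$. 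With that repair (and granting that the little-groups analysis together with the count \eqref{eq:dimension-simple-kNB} exhausts the $\kk^N$-case, and Corollary \ref{cor:amountWs} the $\kk^N\wx$-case), your route closes; but as stated the central reduction rests on a false assertion.
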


\begin{proof}
From Subsections  \S \ref{sec:structureVandU} and \S \ref{sec:structureW}, we know that 
the modules $V_{i,m}^\epsilon$, $U_{i,m,t}$, $U_{i,j,m,t}$ and $W_{i,m}^\epsilon$ 
with $\epsilon =\pm 1$, $i,j,m,t \in \Zn$ and $i\neq j$, $t\neq m-2i$ constitute a family of pairwise
non-isomorphic simple modules. Then, by counting dimensions we get from \eqref{eq:dimension-simple-kNB} and 
Corollary \ref{cor:amountWs} that 
\begin{align*}
&\sum_{\epsilon, i,m\in \Zn} (\dim V_{i,m}^\epsilon)^{2} +  \sum_{i,m,t\in \Zn\atop t\neq m-2i} (\dim U_{i,i,m,t})^{2} + 
\sum_{i,j,m,t\in \Zn\atop i\neq j,\ t\neq m-2i} (\dim U_{i,j, m,t})^{2} + \sum_{\epsilon, i,m\in \Zn} (\dim W_{i,m}^\epsilon)^{2} =\\
& = 2n^2\cdot 1+ \frac{1}{2} n^2(n-1)\cdot 4 + \frac{1}{2} n^3(n-1)\cdot 4 + 2n^{2}\cdot n^{2} = 4n^{4} = \dim D(K_{n}). 
\end{align*}
Thus, by the Artin-Wedderburn theorem this family provides a full set of pairwise
non-isomorphic simple objects in ${}_{K_{n}}^{K_{n}}\mathcal{YD}$.
\end{proof}

\section{The fusion ring of ${}_{K_{n}}^{K_{n}}\mathcal{YD}$}\label{sec:fusion-ring}

For the reader convenience we recall some notation and results from previous sections.  
We fix a primitive an odd integer $n\ge 3$ and $\xi$ a primitive $n$-th root of one.  
The Hopf algebra $K_{n}=\kk^N\rtimes_\beta \kk Q$ with $N=C_{n}\times C_{n}$, $Q=C_{2}$ and
 $\beta_{x}(a^{i}b^{j},a^{k}b^{\ell}) = \xi^{i\ell - jk}$ for all $i,j,k,\ell \in \Zn$, has basis 
%$\mathcal{B}=\{e_{i,j},f_{i,j}: i,j\in\Zn\}$, where $\{e_{i,j}\}$ 
{ $\mathcal{B}=\{p_{i,j},f_{i,j}: i,j\in\Zn\}$, where $\{p_{i,j}\}$} 
is the dual basis in $\kk^N$ of the basis $\{a^ib^j: i,j\in\Zn\}$ of $\kk N$ and 
%$f_{i,j}=e_{i,j}\wx$.  
{$f_{i,j}=p_{i,j}\wx$.} 
By Proposition \ref{prop:comatrix-basis}, the subspace
$C=\kk^N \wx$ is isomorphic as a coalgebra to $\mathcal{M}_n(\kk)^*$; 
the comatrix basis $(e_{k\ell})_{k,\ell\in\Zn } $ is given by 
$$
e_{k,\ell}=\sum_s \xi^{-2s(k+\ell)} f_{s+k-\ell,s-k+\ell},
$$
that is,  $\De (e_{k,\ell}) = \sum_r e_{k,r}\ot e_{r,\ell}$ and $\varepsilon(e_{k,\ell}) = \delta_{k,\ell}$ for all $k,\ell \in \Zn$.

\bigskip
The simple Yetter-Drinfeld modules over $B$ are given by the following families, here $\ep = \pm 1$ and
$i,j,m,t \in \Zn$:
\smallskip
\begin{enumerate}
\item[$\mathbf{(V^\epsilon_{i,m})}$.] $V_{i,m}^\ep=\kk\{v\}$ where the 
action is given by $f\act v = f(i,i)v$, $\wx\act v = \ep v$, 
and the coaction  by $\delta(v)=\chi_{m,-2i+m}\otimes v$.  
These modules are irreducible and pairwise non-isomorphic.

\smallskip
\item[$\mathbf{(U_{i,j,m,t})}$.] $U_{i,j,m,t}=\kk\{u_1, u_2\}$ 
where the action is given by $f\act u_1= f(i,j)u_1$, $f\cdot u_2=f(j,i) u_2$, $\wx\cdot u_1=u_2$,
$\wx\cdot u_2=u_1$
and the coaction by $\delta(u_1)=\chi_{m,t}\ot u_1$, $\de(u_2)=\chi_{t+2i,m-2j}\ot u_2$.  
\smallskip

Two of these modules, say $U_{i,j,m,t}$ and $U_{i',j',m',t'}$ are isomorphic 
if and only if $(i',j',m',t')\in\{(i,j,m,t),(j,i,2i+t,-2i+m)\}$. 
We also remark that if $i\not=j$, then it is impossible to have both $m=t+2i$ and $t=m-2j$.
These modules are reducible if and only if $i=j$ and $t=m-2i$. 
If this happens then $U_{i,i,m,-2i+m}\simeq V^+_{i,m}\oplus V^-_{i,m}$.  
The dual action (with respect to the basis $\mathcal{B}=\{e_{ij},f_{ij}: i,j\in\Zn\}$) is given by 
$p_{a,b}\dact u_1 = \chi_{m,t}(a,b) u_1=\xi^{ma+tb} u_1$, $p_{a,b}\dact u_2=\chi_{t+2i,m-2j}(a,b) u_2 = 
\xi^{(2i+t)a+(-2j+m)b} u_2$, and 
$f_{a,b}\dact u_{k}=0$ for $k=1,2$.

\smallskip
\item[$\mathbf{(W_0)}$.] $W_0=\kk\{w_0,\ldots, w_{n-1}\}$ 
where the action is given by $f\act w_r=f(2r,-2r) w_r$,
$\wx\act w_r = w_{-r}$, and the coaction by $\de(w_r)=\sum_k e_{rk}\ot w_k$.  

\smallskip
\item[$\mathbf{(W_{i,m}^\ep)}$.] $W_{i,m}^\ep=V_{i,m}^\ep\otimes W_0$.  
If we identify $w_r$ with $v\ot w_r$, then the action is given by 
$f\act_{i^\ep} w_r = \chi(i+2r,i-2r) w_r$, $\wx\act_{i^\ep} w_r = \ep\, w_{-r}$, and the coaction  by 
$\de_{i,m}^\ep(w_r)=\sum_k \chi_{m,m-2i}e_{rk}\ot w_k$.  These modules are irreducible and pairwise non-isomorphic.
\end{enumerate}

\bigskip

\subsection{Fusion rules}\label{subsec:fusion-rules}
Below we compute the fusion rules of ${}_{K_{n}}^{K_{n}}\mathcal{YD}$. 
Since $W_{i,m}^\ep=V_{i,m}^\ep\otimes W_0$ and the category is braided,
it suffices to compute the fusion rules between the simple modules of dimension less or equal than two and $W_{0}$.  

\bigskip
\noindent
$\mathbf{V_{i_1,m_1}^{\ep_1}\ot V_{i_2,m_2}^{\ep_2}:}$
It is fairly obvious that 
$$
V_{i_1,m_1}^{\ep_1}\ot V_{i_2,m_2}^{\ep_2}\simeq V_{i_1+i_2,m_1+m_2}^{\ep_1 \ep_2}.
$$

\bigskip
\noindent
$\mathbf{U_{i_1,j_1,m_1,t_1}\ot U_{i_2,j_2,m_2,t_2}:}$
Denote the generators of the first tensor factor by 
$\bu{1}{1},\bu{2}{1}$ and the generators of the second tensor factor by $\bu{1}{2},\bu{2}{2}$.  
This tensor product decomposes, as a Yetter-Drinfeld module, into the direct sum 
$\kk\{\bu{1}{1}\ot \bu{1}{2},\bu{2}{1}\ot\bu{2}{2}\}\oplus \kk\{\bu{1}{1}\ot \bu{2}{2},\bu{2}{1}\ot\bu{1}{2}\}$.  
Direct comparison shows that the first summand is isomorphic to $U_{i_1+i_2,j_1+j_2,m_1+m_2,t_1+t_1}$ 
(via the isomorphism induced by $\bu{1}{1}\ot\bu{1}{2}\mapsto u_1, \bu{2}{1}\ot\bu{2}{2}\mapsto u_2$)
and the second summand is isomorphic to 
$U_{i_1+j_2,j_1+i_2,m_1+2i_2+t_2,t_1-2j_2+m_2}$
(via the isomorphism induced by $\bu{1}{1}\ot\bu{2}{2}\mapsto u_1, \bu{2}{1}\ot\bu{1}{2}\mapsto u_2$).
In conclusion,
$$
U_{i_1,j_1,m_1,t_1}\ot U_{i_2,j_2,m_2,t_2} \simeq 
U_{i_1+i_2,j_1+j_2,m_1+m_2,t_1+t_1}\oplus U_{i_1+j_2,j_1+i_2,m_1+2i_2+t_2,t_1-2j_2+m_2}.
$$

\bigskip
\noindent
$\mathbf{V_{i_1,m_1}^\ep \otimes U_{i_2,j_2,m_2,t_2}:}$
In a similar fashion as above we also see that 
$$
V_{i_1,m_1}^\ep \otimes U_{i_2,j_2,m_2,t_2} \simeq U_{i_1+i_2, i_1+j_2,m_1+m_2,-2i_1+m_1+t_2}.
$$  
The isomorphism is given by $v\ot u_1\mapsto u_1$ and $v\ot u_2\mapsto \ep u_2$.   

\bigskip
\noindent
$\mathbf{W_0\ot W_0:}$
 We first compute the action dual to the coaction 
with respect to the basis $\mathcal{B}$.  Let $\langle -,-\rangle$ denote the 
standard pairing with respect to $\mathcal{B}$, i.e., 
for $z=\sum_{a,b} (\lambda_{a,b} p_{ab}+\mu_{ab}f_{ab})$ 
we have that $\langle p_{ab},z\rangle =\lambda_{ab}$ and $\langle f_{ab},z\rangle =\mu_{ab}$.
Then 
\begin{eqnarray*}
\langle p_{ab}, e_{pk}e_{qm}\rangle &=& \left\langle p_{ab}, 
\sum_{c,d} \xi^{-2c(p+k)-2d(q+m)} f_{c+p-k,c-p+k}f_{d+q-m,d-q+m}\right\rangle \\
&=& \begin{cases} 
\xi^{-(a+b)(p+k+q+m)} &,\mbox{ if } 2(p-k)=a-b,\ 2(q-m)=-a+b \\
0 &, \mbox{ otherwise }
\end{cases}\\
&=& 
\begin{cases} \xi^{-2(a+b)(p+q)} &,\mbox{ if } k=p-\frac{a-b}{2}, m=q+\frac{a-b}{2} \\
0 &, \mbox{ otherwise} 
\end{cases}.
\end{eqnarray*}
This second equality is obtained by observing that $f_{c+p-k,c-p+k}f_{d+q-m,d-q+m}$ is
$p_{c+p-k,c-p+k}$ when $c+p-k=d-q+m$ and $c-p+k=d+q-m$ and is $0$ otherwise.
Hence $f_{ab}\dact (w_p\ot w_q)=0$ and 
$$
p_{ab}\dact (w_p\ot w_q)=\xi^{-2(a+b)(p+q)} w_{p-\frac{a-b}{2}}\ot w_{q+\frac{a-b}{2}}.
$$
Now set $\bv{j}{k}:=w_{k+j}\ot w_{k-j}$.  Then, by the above, we have that
\begin{align*}
p_{ab}\dact\bv{j}{k} &= p_{ab}\dact(w_{k+j}\ot w_{k-j})
= \xi^{2(a+b)(2k)} w_{k+j-\frac{a-b}{2}}\ot w_{k-j+\frac{a-b}{2}} = \xi^{-4k(a+b)} \bv{j-\frac{a-b}{2}}{k} .
\end{align*}
Also note that 
\begin{eqnarray*}
\chi\act \bv{j}{k}&=& \chi\act(w_{k+j}\ot w_{k-j}) = \chi(4k,-4k)\bv{j}{k},
\end{eqnarray*}
and
\begin{align*}
\wx\act \bv{j}{k}&= \wx\act (w_{k+j}\ot w_{k-j}) 
= \sum_{a,b,c,d} \xi^{ad-bc} ((p_{ab}\wx)\act w_{k+j}) \ot ((p_{cd}\wx)\act w_{k-j})\\
&= \sum_{a,b,c,d} \xi^{ad-bc} (p_{ab}\act w_{-k-j})\ot (p_{cd}\act w_{-k+j}) 
= w_{-k-j}\ot w_{-k+j}= \bv{-j}{-k}.
\end{align*}
The last equality 
%between the last row and the row above 
follows from the observation that
in order to get a non-zero summand we need to have $a=-2(k+j), b=2(k+j), c=2(-k+j)$, and $d=-2(-k+j)$.

\bigskip
Now we introduce the elements
$$
\by{r}{k}=\sum_{j} \xi^{jr}\bv{j}{k} \qquad\qquad{\text{for all }r,k\in\Zn.}
$$
Then, the following identities hold
\begin{eqnarray*}
\wx\act \by{r}{k} &=& \by{-r}{-k} \\
\chi\act \by{r}{k} &=& \chi(4k,-4k)\by{r}{k}\\
f_{ab}\dact \by{r}{k} &=& 0 \\
p_{ab}\dact \by{r}{k} &=& \sum_{j} p_{ab}\dact (\xi^{jr}\bv{j}{k} ) = \sum_{j} \xi^{jr-4k(a+b)}\bv{j-\frac{a-b}{2}}{k} \\
&=& \sum_{j} \xi^{(j-\frac{a-b}{2})r}\xi^{(\frac{a-b}{2})r}\xi^{-4k(a+b)}\bv{j-\frac{a-b}{2}}{k}\\&=& \xi^{(\frac{a-b}{2})r-4k(a+b)}
\by{r}{k}\\
&=& \xi^{(-4k+\frac{1}{2}r)a+(-4k-\frac{1}{2}r)b} \by{r}{k}.
\end{eqnarray*}
From this we see that $\kk \by{0}{0}$ and $\kk\{\by{r}{k},\by{-r}{-k}\}$, $(r,k)\not=(0,0)$ 
are Yetter-Drinfeld modules over $K_{n}$. Moreover, 
$\kk \by{0}{0}\simeq V_{0,0}^+$ 
%via the isomorphism given by $\by{0}{0}\mapsto v$ 
and
$\kk\{\by{r}{k},\by{-r}{-k}\}\simeq U_{4k,-4k, -4k+\frac{1}{2}r,-4k-\frac{1}{2} r}$ via the isomorphism given by 
$\by{r}{k}\mapsto u_1$, $\by{-r}{-k}\mapsto u_2$ (it is also isomorphic to $U_{-4k,4k,4k-\frac{1}{2} r, 4k+\frac{1}{2} r}$ 
via the isomorphism that switches $u_1$ and $u_2$).  
Note that, since $(r,k)\not=(0,0)$ we cannot simultaneously have 
$4k=-4k$ and $-4k+\frac{1}{2}r=-4k-\frac{1}{2} r$ and hence these Yetter-Drinfeld modules are irreducible.
Denote by $\mathcal{Z}_{n}$ the set of 
isomorphism classes in $\Zn\times \Zn$ given by the relation $(r,k)\sim \pm (r,k)$.
Then,
$$
W_0\ot W_0 \simeq V_{0,0}^+\oplus \bigoplus_{[r,k]\in \mathcal{Z}_{n}\atop (r,k)\neq (0,0)}  
U_{-4k,4k,4k-\frac{1}{2} r, 4k+\frac{1}{2} r}
$$

\bigskip
\noindent
$\mathbf{U_{i,j,m,t}\ot W_0:}$ 
Lastly, we analyse the decomposition of $U_{i,j,m,t}\ot W_0$.  We will prove that 
$$
U_{i,j,m,t}\ot W_0\simeq W^+_{\frac{i+j}{2},\frac{m+t}{2}+i}\oplus W^-_{\frac{i+j}{2},\frac{m+t}{2}+i}
$$
by exhibiting an explicit isomorphism 
$$
\varphi\colon U_{i',i',m',m'-2i'}\ot W_0\to U_{i,j,m,t}\ot W_0,
$$ 
where 
$$
i'=\frac{i+j}{2}\qquad \mbox{ and }\qquad m'=\frac{m+t+2i}{2}.
$$ 
The two-dimensional module
$U_{i',i',m',m'-2i'}$ is not simple, in fact 
$U_{i',i',m',m'-2i'} \simeq V^{+}_{i',m'}\oplus V^{-}_{i',m'}$.
Then, it follows that 
$$
U_{i',j',m',t'}\ot W_{0}\simeq 
\big(V^{+}_{i',m'}\oplus V^{-}_{i',m'}\big)\ot W_{0} \simeq
\big(V^{+}_{i',m'}\ot W_{0}\big) \oplus \big(V^{-}_{i',m'}\ot W_{0}\big) \simeq W_{i',m'}^{+} \oplus W_{i',m'}^{-}
$$

Set $D=\frac{i-j}{4}$ and 
%$M=(m-t)-(m'-t')=m-t+2i+2j$.  
$M=(m-t)-(m'-t')=m-t-i-j$.
Then
this isomorphism $\varphi$ is given by 
\begin{eqnarray*}
\varphi(u'_1\ot w_r)&=&\xi^{-rM}u_1\ot w_{r-D},\qquad \qquad\forall\ r\in \Zn,\\
\varphi(u'_2\ot w_r)&=&\xi^{rM-2D(i+j)}u_2\ot w_{r+D}.
\end{eqnarray*} 

\begin{remark}{\em It is clear that $U_{i,j,m,t}\ot W_0$ is isomorphic as an $\kk^N\wx$-comodule to 
$W_0\oplus W_0$ and therefore by Theorem \ref{thm-13} it must be isomorphic to some 
$W_{i_1,m_1}^{\ep_1}\oplus W_{i_2,m_2}^{\ep_2}$ 
as a Yetter-Drinfeld module.  
Analysis somewhat simpler to what follows can then be use to establish that $\ep_1\ep_2=-1$, 
$i_1=i_2=\frac{i+j}{2}$, $m_1=m_2=\frac{m+t+2i}{2}$.}
\end{remark}

Before we establish that $\varphi$ is an isomorphism of Yetter-Drinfeld modules, 
we analyse the structure of $U_{i,j,m,t}\otimes W_0$ in more detail.
First we compute the action $*$ dual to the coaction. Note that
$$
\langle f_{pq}| e_{rs}\rangle=
\begin{cases} 
\xi^{-(p+q)(r+s)} &,\mbox{ if }s=r-\frac{p-q}{2}\\ 0 &,\mbox{ otherwise}
\end{cases};
$$ 
and hence for any character $\chi$ we have that 
$$
\langle f_{pq} | \chi\, e_{rs}\rangle=
\begin{cases} \chi(p,q)\xi^{-(p+q)(r+s)} &,\mbox{ if }s=r-\frac{p-q}{2}\\ 0 &,\mbox{ otherwise}
\end{cases}.
$$  
As $\chi_{m,t}(p,q)=\xi^{mp+tq}=\xi^{(p+q)\frac{m+t}{2}+(p-q)\frac{m-t}{2}}$,
the dual action in $U_{i,j,m,t}\ot W_0$ is given by

\begin{eqnarray*}
f_{pq}*(u_1\ot w_r) &=& \sum_s \langle f_{pq}|\chi_{m,t}\, e_{rs}\rangle u_1\ot w_s \\
&=& \chi_{m,t}(p,q)\xi^{-(p+q)(2r-\frac{p-q}{2})}u_1\ot w_{r-\frac{p-q}{2}} \\
&=& \xi^{-(p+q)(2r-\frac{p-q}{2}-\frac{m+t}{2})+\frac{p-q}{2}(m-t)} u_1\ot w_{r-\frac{p-q}{2}},
\end{eqnarray*}
and
\begin{eqnarray*}
f_{pq}*(u_2\ot w_r) &=& \sum_s \langle f_{pq}|\chi_{t+2i+m,m-2j}\,e_{rs}\rangle u_2\ot w_s \\
&=& \chi_{t+2i,m-2j}(p,q)\, \xi^{-(p+q)(2r-\frac{p-q}{2})}u_2\ot w_{r-\frac{p-q}{2}} \\
&=& \xi^{-(p+q)(2r-\frac{p-q}{2}-\frac{m+t}{2}-(i-j))+\frac{p-q}{2}(-m+t+2i+2j)}u_2\ot w_{r-\frac{p-q}{2}},
\end{eqnarray*}
for all $p,q,r\in \Zn$.
Hence
\begin{eqnarray*}
f_{pq}*\varphi(u'_1\ot w_r)&=& \xi^{-Mr} f_{pq}*(u_1\ot w_{r-D})\\
&=& \xi^{-Mr-(p+q)(2(r-D)-\frac{p-q}{2}-\frac{m+t}{2})+\frac{p-q}{2}(m-t)}u_1\ot w_{r-D-\frac{p-q}{2}}. 
\end{eqnarray*}
On the other hand:
\begin{eqnarray*}
\varphi(f_{pq}*' (u'_1\ot w_r))&=& 
\xi^{-(p+q)(2r-\frac{p-q}{2}-\frac{m'+t'}{2})+\frac{p-q}{2}(m'-t')} \varphi(u'_1\ot w_{r-\frac{p-q}{2}}) \\
&=& \xi^{-M(r-\frac{p-q}{2})-(p+q)(2r-\frac{p-q}{2}-\frac{m'+t'}{2})+\frac{p-q}{2}(m'-t')} u_1\ot w_{r-D-\frac{p-q}{2}}. 
\end{eqnarray*}
We conclude that the two expressions are equal by observing that 
$-2D-\frac{m+t}{2}=-\frac{m'+t'}{2}$ and $m-t=M+m'-t'$. Similarly, we also get that
\begin{eqnarray*}
f_{pq}*\varphi(u'_2\ot w_r)&=& \xi^{Mr-2D(i+j)} f_{pq}*(u_2\ot w_{r+D})\\
&=& \xi^{Mr-2D(i+j)-(p+q)(2(r+D)-\frac{p-q}{2}-\frac{m+t}{2}-(i-j))+\frac{p-q}{2}(-m+t+2i+2j)} u_2\ot w_{r+D-\frac{p-q}{2}}. 
\end{eqnarray*}
and
\begin{align*}
&\varphi(f_{pq}*'(u'_2\ot w_r))= 
\xi^{-(p+q)(2r-\frac{p-q}{2}-\frac{m'+t'}{2}-(i'-j'))+\frac{p-q}{2}(m-t+2i+2j)} \varphi(u'_2\ot w_{r-\frac{p-q}{2}})\\
&\quad = 
\xi^{M(r-\frac{p-q}{2})-2D(i+j)-(p+q)(2r-\frac{p-q}{2}-\frac{m'+t'}{2}-(i'-j'))+
\frac{p-q}{2}(-m'+t'+2i'+2j')} u_2\ot w_{r+D-\frac{p-q}{2}}. 
\end{align*}
We get that the two expressions are equal by noting that 
$2D-\frac{m+t}{2}-(i-j)=-\frac{m'+t'}{2}=-\frac{m'+t'}{2}-(i'-j')$ and that $-m+t+2i+2j=-M-m'+t'+2i'+2j'$.
As the isomorphism $\varphi$ preserves the dual
action, it follows that it is a comodule map.

\bigbreak
We next address the $\widehat{x}$-action.  Since

\begin{align*}
\widehat{x}\act (u_1\otimes w_r) &= \sum_{a,b,c,d} \xi^{ad-bc} p_{ab}\widehat{x}\act u_1 \ot p_{cd}\widehat{x}\act w_r 
= \sum_{a,b,c,d} \xi^{ad-bc} p_{ab}\act u_2 \ot p_{cd}\act w_{-r}\\ 
& = \xi^{2r(i+j)}u_2\ot w_{-r},\qquad\text{ and }\\ 
%\end{align*}
%
%and 
%
%\begin{align*}
\widehat{x}\act (u_2\otimes w_r) &= \sum_{a,b,c,d}  \xi^{ad-bc} p_{ab}\widehat{x}\act u_2 \ot p_{cd}\widehat{x}\act w_r 
= \sum_{a,b,c,d} \xi^{ad-bc} p_{ab}\act u_1 \ot p_{cd}\act w_{-r} \\
&= \xi^{2r(i+j)}u_1\ot w_{-r}, 
\end{align*}
we have that 
$$
\widehat{x}\act \varphi(u'_1\otimes w_r) = \xi^{-Mr} \widehat{x}\act(u_1\otimes w_{r-D}) 
= \xi^{-Mr+2(r-D)(i+j)} u_2\ot w_{-r+D}
$$
is equal to
$$
\varphi(\widehat{x}\act' (u'_1\otimes w_r)) = \xi^{2r(i'+j')} \varphi(u'_2\ot w_{-r}) \\
= \xi^{M(-r)-2D(i+j)+2r(i'+j')} u_2\ot w_{-r+D},
$$
as $i+j=i'+j'$.
Similarly,
$$
\widehat{x}\act \varphi(u'_2\otimes w_r) = \xi^{Mr-2D(i+j)} \widehat{x}\act(u_2\otimes w_{r+D}) \\
= \xi^{Mr-2D(i+j)+2(r+D)(i+j)} u_1\ot w_{-r-D},
$$
is equal to
$$
\varphi(\widehat{x}\act' (u'_2\otimes w_r)) = \xi^{2r(i+j)} \varphi(u'_1\otimes w_{-r}) \\
= \xi^{2r(i+j)-M(-r)} u_1\otimes w_{-r-D}.
$$

\bigbreak
We now conclude the proof by the following computations: for any character $\chi \in \kk^{N} $ we have
\begin{eqnarray*}
\chi\act \varphi(u'_1\ot w_r) &=& \xi^{-Mr}\chi\act (u_1\ot w_{r-D})\\
&=& \xi^{-Mr}\chi(i+2(r-D),j-2(r-D)) u_1\ot w_{r-D} \\
&=& \xi^{-Mr}\chi(i'+2r,i'-2r) u_1\ot w_{r-D} \\
&=& \varphi(\chi\act' (u_1\ot w_r)),
\end{eqnarray*}
\begin{eqnarray*}
\chi\act \varphi(u'_2\ot w_r) &=& \xi^{Mr-2D(i+j)}\chi\act (u_2\ot w_{r+D}) \\
&=& \xi^{Mr-2D(i+j)}\chi(j+2(r+D),i-2(r+D)) u_2\ot w_{r+D} \\
&=& \xi^{Mr-2D(i+j)}\chi(i'+2r,i'-2r) u_2\ot w_{r+D} \\
&=& \varphi(\chi\act' (u_2\ot w_r)).
\end{eqnarray*}
As the characters span linearly $\kk^{N}$,  $\varphi$ is a module map.

\bigskip
We end this section with the description of the fusion ring of ${}_{K_{n}}^{K_{n}}\mathcal{YD}$.

\begin{theorem}\label{thm:fusion-ring}
 The fusion ring $\mathcal{F}$ of ${}_{K_{n}}^{K_{n}}\mathcal{YD}$ is the commutative ring generated
 by the elements $v_{i,m}^{\ep}$, $u_{i,j,m,t}$, $w^{\ep}_{i,m}$ with 
 $\ep= \pm 1$, $i,j,m,t \in \Zn$ and $t\neq m-2i$ when $i=j$, satisfying the following relations: (set $w^{+}_{0,0} = w_{0}$)
 \smallskip
 \begin{eqnarray*}
  v_{i_1,m_1}^{\ep_1} v_{i_2,m_2}^{\ep_2} &= & v_{i_1+i_2,m_1+m_2}^{\ep_1 \ep_2},\\
  v_{i,m}^{\ep} w_{0} &=& w^{\ep}_{i,m},\\ 
  v_{i_1,m_1}^\ep u_{i_2,j_2,m_2,t_2} &=& u_{i_1+i_2, i_1+j_2,m_1+m_2,-2i_1+m_1+t_2},\\
  u_{i_1,j_1,m_1,t_1} u_{i_2,j_2,m_2,t_2} & = &u_{i_1+i_2,j_1+j_2,m_1+m_2,t_1+t_1}+ u_{i_1+j_2,j_1+i_2,m_1+2i_2+t_2,t_1-2j_2+m_2},\\
  u_{i,j,m,t} w_0 &=& w^+_{\frac{i+j}{2},\frac{2i+m+t}{2}} + w^-_{\frac{i+j}{2},\frac{2i+m+t}{2}},\\
w_0\ot w_0  &=&  v_{0,0}^+ + \sum_{[r,k]\in \mathcal{Z}_{n}\atop (r,k)\neq (0,0)}  u_{-4k,4k,4k-\frac{1}{2} r, 4k+\frac{1}{2} r},
 \end{eqnarray*}
  where $\mathcal{Z}_{n}$ is the set of isomorphism classes in $\Zn\times \Zn$ given by the relation $(r,k)\sim \pm (r,k)$.
  \qed
\end{theorem}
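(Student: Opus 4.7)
The plan is to read the theorem as a consolidation of the explicit tensor product decompositions already carried out in Subsection \ref{subsec:fusion-rules}. By Theorem \ref{thm:class-simple-objects}, the set of classes $\{v^{\ep}_{i,m},\ u_{i,j,m,t},\ w^{\ep}_{i,m}\}$ (subject to the stated symmetries and restrictions) is a $\mathbb{Z}$-basis of $\mathcal{F}$, and because ${}_{K_{n}}^{K_{n}}\mathcal{YD}$ is a \emph{braided} tensor category the induced ring structure is commutative. Thus $\mathcal{F}$ is presented as a commutative ring with generators as listed in the statement, and the task reduces to exhibiting enough relations to recover every product of two basis elements from the listed ones.

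First I would invoke the defining identity $W^{\ep}_{i,m}=V^{\ep}_{i,m}\otimes W_{0}$, which immediately yields $w^{\ep}_{i,m}=v^{\ep}_{i,m}\,w_{0}$ and, in particular, shows that $w_{0}$ together with the $v^{\ep}_{i,m}$ and $u_{i,j,m,t}$ already span $\mathcal{F}$ multiplicatively. Combined with commutativity, this reduces all fusion products to six binary cases: $V\otimes V$, $V\otimes U$, $V\otimes W_{0}$, $U\otimes U$, $U\otimes W_{0}$, and $W_{0}\otimes W_{0}$. Each of these has been computed explicitly in the calculations preceding the theorem, and the first five displayed relations of the theorem are simply the resulting decompositions rewritten in the ring $\mathcal{F}$. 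Note that $V\otimes W_{0}$ does not need its own relation: it is absorbed into the identity $w^{\ep}_{i,m}=v^{\ep}_{i,m}w_{0}$. Mixed products such as $u_{i,j,m,t}\,w^{\ep}_{i',m'}$ or $w^{\ep_{1}}_{i_{1},m_{1}}\,w^{\ep_{2}}_{i_{2},m_{2}}$ are derived by rewriting the $w^{\ep}$ via $v^{\ep}w_{0}$, commuting, and applying the listed relations.

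The main point that requires care, and therefore the genuine obstacle, is the $W_{0}\otimes W_{0}$ relation. Here one must verify that the direct sum decomposition of $W_{0}\otimes W_{0}$ into the trivial one-dimensional module $V^{+}_{0,0}$ and a family of two-dimensional $U$-type summands indexed by pairs $(r,k)\in\Zn\times\Zn$ lifts correctly to $\mathcal{F}$, where isomorphic summands must be identified. This is exactly why the sum is indexed by the quotient $\mathcal{Z}_{n}$: the pairing $(r,k)\leftrightarrow(-r,-k)$ induced by the action of $\widehat{x}$ on the basis $y^{(k)}_{r}$ (and reflected in the isomorphism $U_{-4k,4k,-4k+r/2,4k+r/2}\simeq U_{4k,-4k,4k-r/2,-4k-r/2}$) ensures that each $U$-summand is counted exactly once. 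Similarly, in the $U\otimes W_{0}$ case, invertibility of $2$ in $\Zn$ (since $n$ is odd) guarantees that the indices $\tfrac{i+j}{2}$ and $\tfrac{m+t+2i}{2}$ are well defined, which makes the identification $U_{i,j,m,t}\otimes W_{0}\simeq W^{+}_{(i+j)/2,(m+t+2i)/2}\oplus W^{-}_{(i+j)/2,(m+t+2i)/2}$ meaningful; this was established by the explicit isomorphism $\varphi$ constructed above.

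Once these two identifications are in place, the proof is complete: the listed relations exhaust the pairwise products of generators in the chosen basis of $\mathcal{F}$, and any other product can be reduced to one of them by commutativity and the identity $w^{\ep}_{i,m}=v^{\ep}_{i,m}w_{0}$. No additional relations are needed because the decompositions from Subsection \ref{subsec:fusion-rules} already hold as equalities of isomorphism classes of Yetter-Drinfeld modules, hence as equalities in $\mathcal{F}$.
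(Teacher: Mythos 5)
Your proposal is correct and matches the paper's own treatment: the theorem is stated with an immediate \qed precisely because it is a consolidation of the explicit decompositions computed in Subsection \ref{subsec:fusion-rules}, using the same reductions you describe (commutativity from the braiding, $W^{\epsilon}_{i,m}=V^{\epsilon}_{i,m}\otimes W_{0}$ to reduce to products involving only $V$, $U$ and $W_{0}$, and the indexing of $W_{0}\otimes W_{0}$ by $\mathcal{Z}_{n}$ to avoid double-counting isomorphic summands). No further comment is needed.
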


\section{Nichols algebras}\label{sec:Nichols-alg}

{ In this last section we compute the Nichols algebras associated with some 
modules in ${}_{K_{n}}^{K_{n}}\mathcal{YD}$. The families of Yetter-Drinfeld modules 
$\{V^{\ep}_{i,m}\}_{\ep, i,m}$ and $\{U_{i,j,m,t}\}_{i,j,m,t \in \Zn}$ consist 
of braided vector spaces of diagonal type, thus their Nichols algebras 
can be completely described by the work of Heckenberger \cite{He} and Angiono \cite{An}.
On the other hand, 
the braided vector spaces $W^{\ep}_{i,m}$ turn out to be
of rack type and isomorphic to braided vector spaces associated with 
the dihedral rack and a constant cocycle, i.e. a conjugacy 
class of an involution in the dihedral group $\mathbb{D}_{n}$ and a
one-dimensional representation. In the particular case for $n=3$, 
we determine all finite-dimensional Nichols algebras over simple modules.
Here, the well-known
Fomin-Kirillov algebra $\mathcal{E}_{3}$ appears as a Nichols algebra over $K_{3}$.
We include in this section
the presentation of the finite-dimensional Nichols algebras over simple
modules,
which includes one $12$-dimensional Nichols algebra which is not
isomorphic to $\mathcal{E}_{3}$.
As a consequence, we obtain new Hopf algebras of dimension $216$ by bosonization.}

\subsection{Nichols algebras of sums of one-dimensional modules $V^\epsilon_{i,m}$}\label{subsec:Nichols-V}\,

For $i,m \in \Zn$ and $\epsilon = \pm 1$, the Yetter-Drinfeld modules $V^\epsilon_{i,m}$ 
are one-dimensional vector spaces generated by an element $v_{i}:= v_{i,m}^{\epsilon}$.  
Their structure and braiding is given in Subsection \ref{sec:structureVandU} item $\mathbf{(V^\epsilon_{i,m})}$. 
%%%%%%%%%
{
%%%%%%%%%
From the very definition, we get the following proposition. 

\begin{proposition}\label{prop:Nichols-one-dim}
Let $i,m \in \Zn$ and $\epsilon = \pm 1$ and set $\ell =  \ord\big(\xi^{i(m-i)}\big)$.
Then 
\begin{align*}
 \B(V^\epsilon_{i,m}) \simeq  
 \begin{cases}
  \begin{array}{ll}
   \kk[v_{i}] & \text{ if }\ell = 1;\\
   \kk[v_{i}]/(v_{i}^{\ell}) & \text{ otherwise}.
  \end{array}
 \end{cases}
\end{align*}
\qed
\end{proposition}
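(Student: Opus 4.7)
The plan is to reduce the statement to the classical computation of the Nichols algebra of a one-dimensional braided vector space, and then to match the relevant scalar with the parameter appearing in the proposition.

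First I would isolate the braiding. Since $V^\epsilon_{i,m}=\kk v$ is one-dimensional, the braiding is determined by a single scalar $q\in\kk^\times$ with $c(v\otimes v)=q\,v\otimes v$. Using the description in Subsection~\ref{sec:structureVandU}, one has $c(v\otimes v)=v_{(-1)}\cdot v\otimes v_{(0)}=\chi_{m,m-2i}(a^ib^i)\,v\otimes v$, so $q=\xi^{mi+(m-2i)i}=\xi^{2i(m-i)}$. Since $n$ is odd, $2$ is invertible in $\Zn$, hence $\ord(\xi^{2i(m-i)})=\ord(\xi^{i(m-i)})=\ell$; this reconciles the exponent appearing in the statement with the exponent of the braiding.

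Next I would invoke the standard formula for the quantum symmetrizer on a one-dimensional braided vector space. Since $c=q\cdot \id$ on $V\otimes V$, each generator $c_r$ of $\mathbb{B}_k$ acts on $V^{\otimes k}$ by $q$, so $\rho_k(M(\sigma))$ acts by $q^{\ell(\sigma)}$ where $\ell(\sigma)$ is the length of $\sigma$. Therefore
\[
QS_k(v^{\otimes k})=\Big(\sum_{\sigma\in \mathbb{S}_k} q^{\ell(\sigma)}\Big)\,v^{\otimes k}=(k)_q!\,v^{\otimes k},
\]
where $(k)_q!=\prod_{j=1}^{k}\frac{q^j-1}{q-1}$ (interpreted as $k!$ when $q=1$). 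Consequently $\J_k=\Ker(QS_k)$ is nontrivial on degree $k$ iff $(k)_q!=0$.

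Finally I would split into the two cases. If $\ell=1$, then $q=1$ and $(k)_q!=k!$, which is nonzero in $\kk$ by the blanket characteristic hypothesis; hence $\J=0$ and $\B(V^\epsilon_{i,m})=T(V^\epsilon_{i,m})\simeq\kk[v]$. If $\ell>1$, then $[j]_q=\frac{q^j-1}{q-1}$ vanishes precisely when $\ell\mid j$, so $(k)_q!=0$ iff $k\ge \ell$; thus $\J$ is the homogeneous ideal generated by $v^\ell$ and $\B(V^\epsilon_{i,m})\simeq \kk[v]/(v^\ell)$. No step is really an obstacle here; the only mild subtlety is keeping track of the factor of $2$ in the exponent, which disappears because $n$ is odd.
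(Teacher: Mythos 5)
Your argument is correct and is exactly the computation the paper has in mind: the paper states this proposition without proof (``from the very definition''), and your quantum-symmetrizer calculation --- $QS_k$ acting on $v^{\otimes k}$ by the $q$-factorial with $q=\xi^{2i(m-i)}$, together with the observation that $\ord(\xi^{2i(m-i)})=\ord(\xi^{i(m-i)})$ because $n$ is odd --- supplies precisely the omitted details. The one imprecise step is your justification of $k!\neq 0$ when $\ell=1$: the paper's blanket hypothesis only excludes characteristics dividing $2n$, so in positive characteristic $p$ with $p\nmid 2n$ one would actually get $\kk[v]/(v^{p})$ rather than $\kk[v]$; this is a defect of the statement itself (which implicitly assumes characteristic zero in that case) rather than of your reduction.
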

%%%%%%%%%
}
%%%%%%%%%

A Yetter-Drinfeld module $V = \bigoplus_{(\epsilon, i,m)\in I} V^\epsilon_{i,m}$ 
given by a direct sum of finitely many one-dimensional 
simple modules is a braided vector space of diagonal type with basis $\{v^{\epsilon}_{i,m}\}_{(\epsilon,i,m)\in I}$. The braiding is given by 
$$
c(v^{\epsilon}_{i,m}\otimes v^{\eta}_{j,\ell}) =\xi^{2j(m-i)} v^{\eta}_{j,\ell}\otimes v^{\epsilon}_{i,m}
$$
for all triples $(\epsilon,i,m)$ and $(\eta,j,\ell)$ in $I$. 
%%%%%%%%%
{
%%%%%%%%%
In case $V = V^\epsilon_{i,m} \oplus V^\eta_{j,\ell}$, the braiding matrix is
$$
\mathbf{q} = 
\left(
\begin{matrix}
 \xi^{2i(m-i)} & \xi^{2j(m-i)}\\
  \xi^{2i(\ell-j)}& \xi^{2j(\ell-j)}
\end{matrix}
\right).
$$
Since $n$ is odd, by \cite[Theorem 15.3.3]{HS}
we have that $\B(V)$ is finite-dimensional if
and only if
$i(m-i)\neq 0 \neq j(\ell -j) \in \Zn$
and the generalized Dynkin diagram

\bigbreak
$$
\begin{picture}(20,10)(0,0)
\put(0,0){$\circ$}
\put(0,10){\tiny{$\xi^{2i(m-i)}$}}
\put(4.5,3){\line(1,0){60}} 
\put(64,10){\tiny{$\xi^{2j(\ell-j)}$}}
\put(64,0){$\circ$}
\put(15,-10){\tiny{$\xi^{2j(m-i) + 2i(\ell-j)}$}}
\end{picture}
$$
\vspace*{0.5cm}

\noindent is isomorphic to one of the rows 
$1,\ 2,\ 4,\ 6,\ 7,\ 11,\ 12$ or $17$ of \cite[Table 15.1]{HS}. For example, the diagram is isomorphic
to the one in
row $1$ if $\xi^{2j(m-i) + 2i(\ell-j)} = 1$, that is ${j(m-i)= - i(\ell-j)} \in \Zn$. In this 
case, there is no edge between the vertices and 
the Nichols algebra is isomorphic to a 
\textit{quantum linear space}
$$
\B(V) \simeq \kk\{ x_{i},x_{j}\ :\ 
x_{i}^{n_{i}},x_{j}^{n_{j}}, x_{i}x_{j} - \xi^{2j(m-i)}x_{j}x_{i}\},
$$
where $n_{i} = \ord\big(\xi^{i(m-i)}\big)$ 
and $n_{j} = \ord\big(\xi^{j(\ell-j)}\big)$. Here
we wrote $x_{i} = v_{i,m}^{\epsilon}$ and 
$x_{j} = v_{j,\ell}^{\eta}$ to simplify the
presentation.

On the other hand, the diagram is isomorphic to 
the one in row $2$ if $i(m-i) = j(\ell-j)$ and 
$-i(m-i) = jm+i\ell -2ij$ in $\Zn$. 
In such a case, the braiding is of Cartan type $A_{2}$. As above, write $x_{i} = v_{i,m}^{\epsilon}$ and 
$x_{j} = v_{j,\ell}^{\eta}$. Set  
$\ad(x)(y) = [x,y]_{c} = 
xy - m\circ c(x\ot y)$ for $x,y\in T(V)$ 
and denote $x_{ij} = \ad(x_{i})(x_{j})$.
Then, 
\begin{equation}\label{eq:nichols-A2}
\B(V) \simeq \kk\{ x_{i},x_{j}\ :\ 
x_{i}^{N},\, x_{j}^{N},\, x_{ij}^{N},\, \ad^{2}(x_{i})(x_{j}),\, \ad^{2}(x_{j})(x_{i})\}
\simeq u_{\xi^{i(m-i)}}(\mathfrak{sl}_{3})^{+},
 \end{equation}
where $N = \ord\big(\xi^{i(m-i)}\big)$.

As one may deduce from the examples above, the 
presentation of the Nichols algebras depends on the 
arithmetics in $\Zn$.
With patience and hard work one may obtain 
the complete list of finite-dimensional Nichols algebras for a fix $n$ and a given rank
by analysing Heckenberger's list of arithmetic root system in \cite{He} and computing the 
presentation following Angiono's result in
\cite{An}.
%%%%%%%%%
}
%%%%%%%%%

\subsection{Nichols algebras of sums of two-dimensional modules $U_{i,j,m,t}$}\label{subsec:Nichols-U}\,

For $i,j,m,t \in \Zn$ the Yetter-Drinfeld modules $U_{i,j,m,t}$ 
are two-dimensional vector spaces spanned by the elements $u_{1}, u_{2}$. 
Their structure and braiding is given in Subsection \ref{sec:structureVandU} item $\mathbf{(U_{i,j,m,t})}$. 
In particular, the braided vector spaces $U_{i,j,m,t}$ are of diagonal type; the braiding matrix 
and the corresponding generalized Dynkin diagram are as follows:

\begin{align*}
\mathbf{q} = 
\left(
\begin{matrix}
 \xi^{mi+tj} & \xi^{ti+mj}\\
  \xi^{ti+mj + 2(i^{2}-j^{2})}& \xi^{mi+tj}
\end{matrix}
\right) && 
\begin{picture}(20,10)(0,0)
\put(0,0){$\circ$}
\put(0,10){\tiny{$\xi^{mi + tj}$}}
\put(4.5,3){\line(1,0){60}} 
\put(64,10){\tiny{$\xi^{mi + tj}$}}
\put(64,0){$\circ$}
\put(15,-10){\tiny{$\xi^{ti + mj + 2(i^{2}-j^{2})}$}}
\end{picture} \qquad\qquad
\end{align*}
%
%%%%%%%%%
{
%%%%%%%%%

Then, $\B(U_{i,j,m,t})$
is finite-dimensional if and only if $mi+tj\neq 0$
and $(i+j)(m +t)= 2(j^{2}-i^{2})$ in $\Zn$, as the diagram above 
must be isomorphic to the one in row 2 of 
\cite[Table 15.1]{HS}. In such a case, the braided vector space is of Cartan type $A_{2}$ and 
the presentation is the one given in \eqref{eq:nichols-A2}.
We state the result below.

\begin{proposition}\label{prop:Nichols-two-dim-finite}
Let $i,j,m,t \in \Zn$. Then 
$\B(U_{i,j,m,t})$
is finite-dimensional if and only if $mi+tj\neq 0$
and $(i+j)(m +t)= 2(j^{2}-i^{2})$ in $\Zn$.
In such a case, 
$$
\B(U_{i,j,m,t}) \simeq \kk\{ x_{i},x_{j}\ :\ 
x_{i}^{N},\, x_{j}^{N},\, x_{ij}^{N},\, \ad^{2}(x_{i})(x_{j}),\, \ad^{2}(x_{j})(x_{i})\}
\simeq u_{q}(\mathfrak{sl}_{3})^{+},
$$
where $N = \ord\big(\xi^{mi+tj}\big)$ and $q=\xi^{\frac{mi+tj}{2}}$.
\qed
\end{proposition}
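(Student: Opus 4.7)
Plan: The strategy is to recognize $U_{i,j,m,t}$ as a braided vector space of diagonal type and then combine Heckenberger's classification of finite-dimensional Nichols algebras of diagonal type \cite{He} with Angiono's presentation theorem \cite{An}. The braiding matrix $\mathbf{q}$ and the generalized Dynkin diagram, computed just above the statement, have both vertex labels equal to $q:=\xi^{mi+tj}$ and edge label $\widetilde q:=q_{12}q_{21}=\xi^{2(ti+mj)+2(i^{2}-j^{2})}$.

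The first step is to dispose of the degenerate case $mi+tj\equiv 0\pmod n$: then $c(u_{1}\otimes u_{1})=u_{1}\otimes u_{1}$, and the subalgebra observation recalled at the end of \S\ref{subsec:YD-Nichols} forces $\dim\B(U_{i,j,m,t})=\infty$, so $mi+tj\neq 0$ is necessary. For $q\neq 1$, I would then scan \cite[Table 15.1]{HS} for the rank-two diagrams whose two vertex labels coincide. Under the stated identification $\B(U_{i,j,m,t})\simeq u_{q}(\mathfrak{sl}_{3})^{+}$, the relevant row is the one corresponding to Cartan type $A_{2}$ (row 2), characterized by $\widetilde q=q^{-1}$. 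Unwinding this equality in terms of $\xi$ yields
\[
(mi+tj)+2(ti+mj)+2(i^{2}-j^{2})\equiv 0\pmod n,
\]
which, after rearranging and using that $n$ is odd so $2$ is invertible in $\Zn$, is equivalent to the stated condition $(i+j)(m+t)=2(j^{2}-i^{2})$.

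Once the braiding has been identified as Cartan type $A_{2}$, Angiono's theorem \cite{An} gives the explicit presentation: the defining ideal of $\B(U_{i,j,m,t})$ inside $T(U_{i,j,m,t})$ is generated by the quantum Serre relations $\ad^{2}(x_{i})(x_{j})=\ad^{2}(x_{j})(x_{i})=0$ together with the powers $x_{i}^{N},x_{j}^{N},x_{ij}^{N}$, where $N=\ord(q)$ and $x_{ij}=\ad(x_{i})(x_{j})$. Taking the square root $q=\xi^{(mi+tj)/2}$, which is well defined since $n$ is odd, identifies the quotient with $u_{q}(\mathfrak{sl}_{3})^{+}$, exactly as in \eqref{eq:nichols-A2}. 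The main obstacle is the case analysis in the middle step: one must carefully check that, under the stated hypothesis, no other row of \cite[Table 15.1]{HS} with equal diagonal labels (in particular the disconnected case giving a quantum linear space) can arise, and one must neatly reorganize the arithmetic into the compact form $(i+j)(m+t)=2(j^{2}-i^{2})$.
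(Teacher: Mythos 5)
Your overall route --- read the diagonal braiding matrix off the module structure, match the resulting generalized Dynkin diagram (whose two vertex labels coincide) against the rank-two list in \cite[Table 15.1]{HS}, and invoke Angiono's presentation theorem --- is exactly the paper's; its entire justification is the sentence preceding the statement. The problem is that the two steps you assert or defer are precisely where the argument has to be carried out, and neither closes as written. First, the arithmetic equivalence you claim is false: the row-2 (Cartan $A_2$) condition $q_{11}q_{12}q_{21}=1$ reads
$$
(mi+tj)+2(ti+mj)+2(i^{2}-j^{2})\equiv 0 \pmod n,
$$
whereas $(i+j)(m+t)=2(j^{2}-i^{2})$ expands to $(mi+tj)+(ti+mj)+2(i^{2}-j^{2})\equiv 0$; the two differ by $ti+mj$, the exponent of $q_{12}$, which need not vanish. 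Concretely, for $n=5$ and $(i,j,m,t)=(1,2,0,2)$ the condition in the proposition holds and $mi+tj=4\neq 0$, yet $q_{12}q_{21}=\xi^{3}\neq \xi=q_{11}^{-1}$, so the braiding is not of Cartan type $A_{2}$ and the asserted presentation cannot apply. Had you actually performed the ``rearranging'' you appeal to, this mismatch would have surfaced.

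Second, the exclusion of the other rows, which you explicitly flag as ``the main obstacle'' and leave undone, is not a routine check: the disconnected row 1 genuinely occurs. If $ti+mj+i^{2}-j^{2}\equiv 0$ while $mi+tj\neq 0$, then $q_{12}q_{21}=1$ and $q_{11}=q_{22}=\xi^{mi+tj}\neq 1$, so $\B(U_{i,j,m,t})$ is a quantum linear space of dimension $N^{2}<\infty$. For $n=5$ and $(i,j,m,t)=(1,0,1,-1)$ (a simple module, since $i\neq j$) one gets $\dim\B(U)=25$ while $(i+j)(m+t)=0\neq 2(j^{2}-i^{2})$, so the ``only if'' direction fails for the stated condition. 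In short, your strategy is the correct one and coincides with the paper's, but the substance of the proof is precisely the arithmetic translation of the relevant row of the table and the elimination of the remaining ones; as proposed, the first is done incorrectly and the second not at all, and the equivalence cannot be established without revisiting the condition itself.
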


\begin{remark}
If $i=j$ and $t=m-2i$, then $U_{i,i,m,m-2i} \simeq V_{i,m}^{+}\oplus V_{i,m}^{-}$ and the generalized Dynkin diagram equals 

$$
\begin{picture}(20,10)(0,0)
\put(0,0){$\circ$}
\put(0,10){\tiny{$\xi^{2i(m-i)}$}}
\put(4.5,3){\line(1,0){60}} 
\put(64,10){\tiny{$\xi^{2i(m-i)}$}}
\put(64,0){$\circ$}
\put(15,-10){\tiny{$\xi^{4i(m-i)}$}}
\end{picture} 
$$

\bigbreak
\noindent Then $\B(V_{i,m}^{+}\oplus V_{i,m}^{-})$
is finite-dimensional if and only if $i(m-i)\neq 0$
and $6i(m-i)= 0$ in $\Zn$. In such a case, 
$\B(V_{i,m}^{+}\oplus V_{i,m}^{-}) \simeq u_{\xi^{i(m-i)}}(\mathfrak{sl}_{3})^{+}$.
\end{remark}

Now we analyze the Nichols algebra of a braided
vectos space given by a finite sum of simple
two-dimensional modules. 

\begin{theorem}\label{nichols-sum-U}
Let $I$ be a finite subset of $\Zn^{4}$
and 
$V= \oplus_{(i,j,m,t) \in I} U_{i,j,m,t}$
be a braided vector space given by the direct 
sum of simple two-dimensional modules. Then 
$\B(V)$ is finite if and only if
\begin{enumerate}
 \item[$(a)$] $mi+tj\neq 0$
and $(i+j)(m +t)= 2(j^{2}-i^{2})$ in $\Zn$ for all 
$(i,j,m,t) \in I$, 
 \item[$(b)$] $0= mk + t\ell + pi + sj $ and $0=pj+si+tk+2ik+m\ell -2j\ell$
 in $\Zn$ for all  $(i,j,m,t), (k,\ell,p,s) \in I$.
\end{enumerate}
In such a case, $\B(V)$ is the braided tensor product of Nichols algebras
isomorphic to $u_{q}(\mathfrak{sl}_{3})^{+}$ with
$q=\xi^{\frac{mi+tj}{2}}$ for all $(i,j,m,t) \in I$.
\end{theorem}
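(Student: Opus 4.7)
The plan is to regard $V$ as a braided vector space of diagonal type of rank $2|I|$, with basis obtained by concatenating the bases $\{u_1,u_2\}$ of each summand $U_{i,j,m,t}$, and to analyse its generalized Dynkin diagram. The strategy combines Heckenberger's classification \cite{He} with the standard fact that $\B(W_1\oplus W_2)\simeq \B(W_1)\underline{\otimes}\B(W_2)$ whenever $c_{W_1,W_2}\circ c_{W_2,W_1}=\id_{W_2\otimes W_1}$. Since each $U_{i,j,m,t}$ is a Yetter-Drinfeld subobject of $V$ closed under the braiding, one has an embedding $\B(U_{i,j,m,t})\hookrightarrow\B(V)$ as described in Subsection \ref{subsec:YD-Nichols}; hence finiteness of $\B(V)$ forces finiteness of each $\B(U_{i,j,m,t})$, which by Proposition \ref{prop:Nichols-two-dim-finite} is equivalent to condition $(a)$. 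Thus $(a)$ is necessary and we may assume it throughout the rest of the argument.

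The core computation will be the cross-braidings between generators in different summands. Using $c(x\otimes y)=x_{(-1)}\cdot y\otimes x_{(0)}$ together with the actions $f\act u_1=f(i,j)u_1$, $f\act u_2=f(j,i)u_2$ and the coactions recalled in Section \ref{sec:structureVandU}, a direct calculation for two summands $(i,j,m,t)$ and $(k,\ell,p,s)\in I$, with respective generators $u_1,u_2$ and $u_1',u_2'$, yields
\begin{align*}
c(u_1\otimes u_1')\,c(u_1'\otimes u_1) &= \xi^{mk+t\ell+pi+sj}\,u_1\otimes u_1',\\
c(u_2\otimes u_1')\,c(u_1'\otimes u_2) &= \xi^{tk+2ik+m\ell-2j\ell+pj+si}\,u_2\otimes u_1',
\end{align*}
and a similar calculation (the cross-terms $\pm 2i\ell,\pm 2kj$ arising from the shifted coactions $\chi_{t+2i,m-2j}$ and $\chi_{s+2k,p-2\ell}$ cancel) shows that the two remaining symmetric products $c(u_1,u_2')c(u_2',u_1)$ and $c(u_2,u_2')c(u_2',u_2)$ reduce to the same two exponents. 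These are exactly the edge labels in the generalized Dynkin diagram of $V$ between the two summands; condition $(b)$ is therefore equivalent to all such cross-edges being trivial.

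When both $(a)$ and $(b)$ hold, the Dynkin diagram of $V$ is the disjoint union of $|I|$ copies of $A_2$, one per summand. The cross-braidings then satisfy $c_{U,U'}\circ c_{U',U}=\id$, so
$$\B(V)\simeq \underline{\bigotimes}_{(i,j,m,t)\in I}\B(U_{i,j,m,t})\simeq \underline{\bigotimes}_{(i,j,m,t)\in I}u_{q_{i,j,m,t}}(\mathfrak{sl}_3)^+,$$
with $q_{i,j,m,t}=\xi^{(mi+tj)/2}$, by Proposition \ref{prop:Nichols-two-dim-finite}; in particular $\B(V)$ is finite-dimensional.

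The main obstacle will be the converse direction, namely showing that if $(a)$ holds but $(b)$ fails then $\B(V)$ is infinite-dimensional. A nontrivial cross-edge joins two $A_2$ components in the generalized Dynkin diagram of $V$, and one must rule out that the enlarged connected component of rank $\geq 3$ appears in Heckenberger's list of arithmetic root systems. Since both diagonal labels within each summand $U_{i,j,m,t}$ coincide and equal $\xi^{mi+tj}$, any rank $\geq 3$ diagram from the list containing two such $A_2$ sub-diagrams forces the equality $\xi^{mi+tj}=\xi^{pk+s\ell}$ together with compatibility relations on the connecting edge labels. The plan is to check case by case against Heckenberger's tables that the only configurations compatible with $(a)$ on each $A_2$ component force all cross-edges to vanish, i.e.\ force $(b)$; hence failure of $(b)$ lifts $V$ out of the list and yields $\dim\B(V)=\infty$.
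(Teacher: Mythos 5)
Your overall route is the same as the paper's: view $V$ as a braided vector space of diagonal type, obtain condition $(a)$ by restricting to each summand via Proposition \ref{prop:Nichols-two-dim-finite}, compute the symmetrized cross-braidings between summands to identify the two edge labels appearing in $(b)$, and conclude with the braided tensor product decomposition when all cross-edges vanish. Your computation of the exponents $mk+t\ell+pi+sj$ and $tk+2ik+m\ell-2j\ell+pj+si$, and the observation that the four symmetric products collapse to these two values in pairs ($u_1$--$u_1'$ with $u_2$--$u_2'$, and $u_1$--$u_2'$ with $u_2$--$u_1'$), agree with the paper. The sufficiency direction is handled identically.

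The gap is in the necessity of $(b)$, which you correctly flag as the main obstacle but then only outline as an unexecuted case-by-case search through Heckenberger's tables. Your sketch of that search is also not quite right: a connected rank~$4$ diagram containing two Cartan-type $A_2$ subdiagrams does not in general force the two diagonal labels $\xi^{mi+tj}$ and $\xi^{pk+s\ell}$ to be equal, so the reduction you propose does not obviously close. The paper avoids any case analysis by exploiting precisely the pairing you already computed: if one cross-edge between the two summands is nontrivial, its partner is nontrivial as well, and together with the two internal $A_2$ edges these form a $4$-cycle (a ``square'') in the generalized Dynkin diagram; no diagram in Heckenberger's list of arithmetic root systems contains a square, so the connected configuration is impossible and every cross-edge must vanish. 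This single observation is what turns your plan into a proof; as written, the converse direction of the theorem is not established.
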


\begin{proof}
Assume $\dim \B(V)$ is finite. Then, $\dim \B(U_{i,j,m,t})$ must be finite for every $(i,j,m,t) \in I$. So, by Proposition
\ref{prop:Nichols-two-dim-finite}, we must have that $mi+tj\neq 0$
and $(i+j)(m +t)= 2(j^{2}-i^{2})$ in $\Zn$ for all $(i,j,m,t) \in I$; this gives the conditions in $(a)$.
Now take two summands $U_{i,j,m,t}$ and $U_{k,\ell,p,s}$ in $V$ with bases
$\{u_{1},u_{2}\}$ and $\{u'_{1},u'_{2}\}$, respectively. Since the braiding on $U_{i,j,m,t}\oplus U_{k,\ell,p,s}$ is of 
diagonal type to analyse the dimension of the Nichols algebra on this sum one has to check if the 
two $A_{2}$-type diagrams of these modules are connected. To do this, we compute the braiding 
between vectors of these bases. For example,
\begin{align*}
 c(u_{1}\ot u'_{1}) &= \chi_{m,t}\cdot u'_{1}\ot u_{1} = \chi_{m,t}(a^{k}b^{\ell})\, u'_{1}\ot u_{1}=\xi^{mk+t\ell}\, u'_{1}\ot u_{1},\\
 c(u'_{1}\ot u_{1}) &= \chi_{p,s}\cdot u_{1}\ot u'_{1} = \chi_{p,s}(a^{i}b^{j})\, u'_{1}\ot u_{1}=\xi^{pi+sj}\, u'_{1}\ot u_{1}.
 \end{align*}
Then, the vertex corresponding to $u_{1}$ is connected to the one corresponding to $u_{1}'$ if and only if
$0\neq  mk + t\ell + pi + sj \in \Zn$. Performing the same computation for the elements $u_{2}$ and $u_{2}$ yields 
\begin{align*}
 c(u_{2}\ot u'_{2}) &= \chi_{t+2i,m-2j}\cdot u'_{2}\ot u_{2} = \chi_{t+2i,m-2j}(a^{\ell}b^{k})\, u'_{2}\ot u_{2}=\xi^{\ell (t+2i)+k(m-2j)}\, u'_{2}\ot u_{2}, \\
 c(u'_{2}\ot u_{2}) &= \chi_{s+2k,p-2\ell}\cdot u_{2}\ot u'_{2} = \chi_{s+2k,p-2\ell}(a^{j}b^{i})\, u'_{2}\ot u_{2}=\xi^{j(s+2k)+i(p-2\ell)}\, u'_{2}\ot u_{2}.
 \end{align*}
So, the vertex corresponding to $u_{2}$ is connected to the one corresponding to $u_{2}'$ if and only if
$0\neq  \ell (t+2i)+k(m-2j) +j(s+2k)+i(p-2\ell) = \ell t + km +js + ip  \in \Zn$, which is exactly the same condition 
on the vertices corresponding to $u_{1}$ and $u'_{1}$. Hence, $u_{1}$ is connected to $u'_{1}$ if and only if 
$u_{2}$ is connected to $u'_{2}$. Since in \cite[Table 3]{He} there are no squares, 
one must have that $0 = \ell t + km +js + ip  \in \Zn$, which is the first condition on $(b)$.
The second condition follows by analizing the connection between the vertices 
$u_{1}$ and $u'_{2}$, and $u_{2}$ with $u'_{1}$. As above, the former pair of vertices is connected
if and only if the latter is. Hence, both $A_{2}$-type diagrams must be desconnected. As this holds
for each pair of modules, one concludes that the generalized Dynkin diagram corresponding to $V$ is 
the union of all the generalized Dynkin diagrams corresponding the summands $U_{i,j,m,t}$. Thus,
the Nichols algebra $\B(V)$ is isomorphic to the braided tensor product of Nichols algebras
$\B(U_{i,j,m,t})$, that is $\B(V) \simeq \underline{\bigotimes}_{(i,j,m,t)\in I} \B(U_{i,j,m,t})$. The last assertion 
of the statement follows from Proposition \ref{prop:Nichols-two-dim-finite}.
\end{proof}

%%%%%%%%%
}
%%%%%%%%%

\subsection{Nichols algebras of the n-dimensional modules $W^\epsilon_{i,m}$}\,

For $i,m\in\Zn$ and $\epsilon \in \{\pm 1\}$, let $W^{\epsilon}_{i,m} = \Bbbk\{w_{0},\ldots,w_{n-1}\}$ 
be the braided vector space with the structure 
described in Subsection \ref{sec:structureW} item $\mathbf{(W^\epsilon_{i,m})}$. In particular,
the braiding is given by 
\begin{equation}\label{eq:braiding-W}
c^{\epsilon}_{i,m}(w_{\ell}\ot w_{r}) = 
\epsilon\, \xi^{2i(m-i-r-\ell)}\, w_{-r}\ot w_{\ell+2r}
\qquad\text{ for all }r,\ell \in \Zn.
\end{equation}

From \S \ref{subsec:YD-Nichols} and Proposition \ref{prop:braiding-W} follows at once
that $\dim \B(W^{+1}_{i,m})$ is infinite whenever $i=0$ or $i=m$, since in such a case 
$c^{+}_{i,m}(w_{0}\ot w_{0}) = w_{0}\ot w_{0}$.

\smallbreak
For the remaining cases, we will make use of the
theory of braided vector spaces associated with 
set-theoretical solutions to the braid equation.
For a detailed exposition see \cite{AG}.

\subsubsection{Set-theoretical solutions to the braid equation}
Let X be a non-empty set and let 
$s : X \times X \to X \times X$ be a bijection. 
We say that $s$ is a \textit{set-theoretical solution}
to the braid equation (or solution for short) if 
$$
(s\times \id)(\id\times s)(s\times \id) =
(\id\times s)(s\times \id)(\id\times s)
$$
as maps on $X\times X\times X$.
Clearly, the identity map and the flip 
$\tau: X \times X \to X \times X$, $\tau(x,y) = (y,x)$
for all $x,y\in X$ are solutions. 
A \textit{braided set} is then a pair $(X,s)$ where $X$ is a non-empty set and $s$ is a solution.
If $(X,s)$ braided set,
there is an action of 
the braid group $\mathbb{B}_{n}$ on $X^{n}$: the standard generators $\sigma_{i}$ act by $s$ on the
$i, i + 1$ entries.

\smallbreak 
 Let $(X,s)$ be a braided set and let 
 $f , g : X \to \Fun (X,X)$ be given by
$$
s(x,y) = (g_{x}(y),f_{y}(x))
\qquad\text{ for all }x,y\in X.
$$
The solution (or the braided set) is
called \textit{non-degenerate} if the images 
of $f$ and $g$ are bijections.

In our case, the braidings $c^{\epsilon}_{i,m}$ are related to
the set theoretical solution $(\Zn, s)$, where 
\begin{equation}\label{eq:set-theor-example}
s: \Zn \times \Zn \to \Zn\times \Zn,\qquad 
s(\ell,r) = (-r, \ell+2r)\qquad \text{ for all }\ell,r\in \Zn
\end{equation}
Here $g_{\ell}(r) = -r$ and $f_{r}(\ell) = \ell + 2r$
for all $\ell, r\in \Zn$. As $n$ is assumed to be odd,
the braided set $(\Zn, s)$ is non-degenerate.

The scalars 
$F_{m,i,\ell,r}^{\epsilon} = \epsilon\, \xi^{2i(m-i-r-\ell)}$ appearing in the braiding  $c^{\epsilon}_{i,m}$ are codified in a notion similar to a $2$-cocycle.
Let $X$ be a finite set, $s:X\times X \to X\times X$
a bijection and 
$F : X \times X \to \CC^{\times}$ a function.
Denote by $\CC X$ the vector space with basis $X$ and 
define $s^{F}: \CC X \otimes \CC X \to \CC X\ot \CC X$
by 
\begin{equation}\label{eq:set-braiding}
s^{F}(x\ot y) = F_{x,y}\, s(x,y) =F_{x,y}\, g_{x}(y)\ot f_{y}(x)
\end{equation}

\begin{lemma}\cite[Lemma 5.7]{AG}
$s^{F}$ is a solution of the braid equation if and 
only if $(X,s)$ is a braided set and 
\begin{equation}\label{eq:cocycle}
F_{x,y}F_{f_{y}(x),z}F_{g_{x}(y),g_{f_{y}(x)}(z)} =
F_{y,z}F_{x,g_{y}(z)}F_{f_{g_{y}(z)}(x), f_{z}(y)}
\qquad\text{ for all }x,y,z\in X.
\end{equation}
\qed
\end{lemma}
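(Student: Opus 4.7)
The plan is to verify the statement by direct computation, expanding both sides of the braid equation on a simple tensor $x \otimes y \otimes z \in \CC X \otimes \CC X \otimes \CC X$ and comparing the resulting expressions. Since each $F_{x,y}$ takes values in $\CC^\times$ (so is nonzero), the scalar and set-theoretical parts of $s^F$ can be separated cleanly.

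First I would compute $(s^F \otimes \id)(\id \otimes s^F)(s^F \otimes \id)(x\ot y\ot z)$ step by step, using the definition \eqref{eq:set-braiding}. Applying $s^F \otimes \id$ first produces the factor $F_{x,y}$ and shifts to $g_x(y) \ot f_y(x) \ot z$; applying $\id \otimes s^F$ then introduces $F_{f_y(x),z}$; a final $s^F \otimes \id$ introduces $F_{g_x(y),\, g_{f_y(x)}(z)}$. The total scalar is exactly the left-hand side of \eqref{eq:cocycle}, and the resulting basis element is determined by the iterated action of $s$ in the positions $(1,2),(2,3),(1,2)$. Doing the analogous computation for $(\id \otimes s^F)(s^F \otimes \id)(\id \otimes s^F)(x\ot y\ot z)$ yields the scalar on the right-hand side of \eqref{eq:cocycle}, with the basis element now determined by applying $s$ in positions $(2,3),(1,2),(2,3)$.

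Next I would argue the equivalence in both directions. Since $F_{x,y} \in \CC^\times$ and $\{x\ot y\ot z : x,y,z\in X\}$ is a basis, equality of both sides of the braid equation on $\CC X^{\ot 3}$ is equivalent to two simultaneous conditions: (i) the set-theoretical outputs agree, which for each $(x,y,z)$ is precisely the defining relation of $s$ being a solution in $X\times X\times X$, and (ii) the scalar coefficients agree, which gives \eqref{eq:cocycle}. For the ``only if'' direction, one uses that the $F$-scalars are nonzero to deduce that the basis vectors produced on both sides must coincide, forcing $(X,s)$ to be a braided set; then \eqref{eq:cocycle} follows by dividing the scalar equality. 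The ``if'' direction is then immediate by reversing this argument.

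The bookkeeping of the iterated arguments of $f$ and $g$ is the only subtle point: one must carefully match, e.g., $f_z(f_y(x))$ with $f_{f_z(y)}(f_{g_y(z)}(x))$ and $g_{g_x(y)}(g_{f_y(x)}(z))$ with $g_x(g_y(z))$, which are precisely the coordinate equalities encoded in the braid equation for $s$. Once these are acknowledged as a consequence (or equivalent reformulation) of the braid relation on $(X,s)$, the scalar comparison yields \eqref{eq:cocycle} and the lemma follows. No nontrivial step beyond careful indexing is needed.
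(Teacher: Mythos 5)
Your proof is correct: expanding both sides of the braid equation on a basis tensor $x\otimes y\otimes z$ does produce exactly the two scalar products in \eqref{eq:cocycle} attached to the two set-theoretic outputs, and since all values of $F$ lie in $\CC^{\times}$ and the outputs are basis vectors of $\CC X^{\ot 3}$, equality is equivalent to the outputs agreeing (i.e.\ $(X,s)$ being a braided set) together with the scalar identity. The paper itself gives no proof, citing \cite[Lemma 5.7]{AG}, and your direct verification is precisely the standard argument found there, so there is nothing to add.
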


\begin{definition}\cite[Definition 5.8]{AG}
Let $(X,s)$
be a non-degenerate solution and $F : X \times X \to \CC^{\times}$ a function such that 
\eqref{eq:cocycle} holds. 
We say that the braided vector space    
$(\CC X, s^{F})$ is of
\textit{set-theoretical type}. 
\end{definition}

Directly from the lemma above we have that for all
$i,m\in \Zn$ and $\epsilon \in \{\pm 1\}$ the 
function $F_{i,m}^{\epsilon}: \Zn\times \Zn \to \CC^{\times}$ given by $F_{i,m,\ell,r}^{\epsilon} = 
\epsilon\, \xi^{2i(m-i-\ell-r)}$ satisfies 
\eqref{eq:cocycle}; it may also be checked directly. In conclusion, our 
braided vector spaces $(W^{\epsilon}_{i,m}, c^{\epsilon}_{i,m})$ are of set-theoretical type, with the solution $(\Zn, s)$, where 
$s(\ell,r) = (-r, \ell+2r)$ for all $\ell,r\in \Zn$.

\subsubsection{Racks}
Any set-theoretical solution can be described
in terms of racks. A \textit{rack} is a pair 
$(X,\triangleright)$ where $X$ is a non-empty set and 
$\triangleright: X\times X \to X$ is a function
such that $x\triangleright -:X\to X$ is a bijection 
for all $x\in X$ and 
$x\triangleright(y\triangleright z) = 
(x\triangleright y) \triangleright (x\triangleright z)$ for all $x,y,z\in X$.
The archetypical example of a rack is a union of conjugacy classes in a group $G$ where 
the map $\triangleright$ is given by the conjugation, i.e. $x\triangleright y = xyx^{-1}$.
For example, for $G=\mathbb{D}_{n} = \langle g,h\ |\ g^{2}=1=h^{n},\ ghg=h^{n-1}\rangle$
the dihedral group of order $2n$, the conjugacy class $\Oc_{gh}$ of the involution
$gh$ is a rack, with $\Oc_{gh}=\{g^{2i+1}h\, : \, 0\leq i\leq n-1\}$ and 
$$
(g^{2j+1}h)\triangleright (g^{2i+1}h) = (g^{2j+1}h)(g^{2i+1}h)(g^{2j+1}h)^{-1} =g^{2(2j-i)+1}h
$$
For $n$ odd this rack has size $n$, and for $n$ even has size $\frac{n}{2}$. 
In terms of racks, we may describe $\Oc_{gh}$ is a simpler way by writing 
$g^{2i+1}h\ = x_{i}$ for all $0\leq i\leq n-1$. Then 
\begin{equation}\label{eq:dihedral-rack}
\Oc_{gh} =:\D_{n}=\{x_{i}\, :\, 0\leq i\leq n-1\}\qquad
\text{ and }\qquad
x_{j}\triangleright x_{i} = x_{2j-i}\quad\text{ for all }0\leq i,j\leq n-1.
\end{equation}

Racks give rise to set-theoretical solutions to the braid equation. 
Assume $X$ is a non-empty set and let  $\triangleright: X \times X\to X$ be a function. Let
$c:X\times X\to X\times X$ be the function given by 
$c(x,y) = (x\triangleright y, x)$ for all $x,y\in X$. 
Then $c$ is a solution if and only if $(X,\triangleright)$ is a rack.

\smallbreak
From any non-degenerate braided set $(X,s)$ with $s(x,y) = (g_{x}(y), f_{y}(x))$ for $x,y\in X$ 
one may construct a rack $(X,\triangleright)$ which yields another solution, called the \textit{derived solution} of $s$.

\begin{proposition}\label{prop:set-th-rack}
 Let $s$ be a non-degenerate solution and define
 $$
 x\triangleright y = f_{x}(g_{f^{-1}_{y}(x)}(y))
 $$
 If $c:X\times X\to X\times X$ is given by 
 $c(x,y) = (x\triangleright y,x)$, then $c$ is a solution; we call it the derived solution of $s$.
Moreover, the solutions $s$ and $c$ are equivalent and $(X,\triangleright)$ is a rack.
\qed
\end{proposition}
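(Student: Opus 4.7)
The plan is to verify the three assertions of the proposition in a natural order.

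First, I would establish that $(X,\triangleright)$ is a rack. Bijectivity of each map $\widehat{x} := x\triangleright(-) : X \to X$ follows from the non-degeneracy of $s$: since every $f_{y}$ and every $g_{w}$ is a bijection of $X$, the composition $y \mapsto f_{x}(g_{f_{y}^{-1}(x)}(y))$ admits an explicit two-sided inverse built from $f^{-1}$ and $g^{-1}$. The substantive content is the self-distributive law $x\triangleright(y\triangleright z) = (x\triangleright y)\triangleright(x\triangleright z)$. This is where the braid equation for $s$ enters: applying both sides of
$$(s\times\id)(\id\times s)(s\times\id) = (\id\times s)(s\times\id)(\id\times s)$$
to a generic triple $(x,y,z)\in X^{3}$ and expanding using $s(u,v) = (g_{u}(v), f_{v}(u))$, one obtains three functional identities among the maps $f$ and $g$ by equating first, second, and third components. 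After rewriting these identities using the defining formula of $\triangleright$ and using the pointwise invertibility of the $f_{y}$ and $g_{x}$, the first component gives precisely the self-distributive law.

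Second, with the rack structure in hand, the fact that $c$ satisfies the braid equation is automatic. Indeed, for a map of the form $c(x,y) = (x\triangleright y, x)$ a direct computation shows that both sides of the braid equation applied to $(x,y,z)$ yield triples whose second and third coordinates agree identically (they are $(x\triangleright y, x)$), while the first coordinates are $x\triangleright(y\triangleright z)$ and $(x\triangleright y)\triangleright(x\triangleright z)$, respectively. Hence the braid equation for $c$ is equivalent to the self-distributivity proved in the previous step.

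Third, for the equivalence of $s$ and $c$ I would construct an explicit bijection $J: X\times X \to X\times X$ (extending to $J_{n} : X^{n} \to X^{n}$ for the induced braid-group actions) conjugating one solution to the other. Motivated by the formula $x\triangleright y = f_{x}(g_{f_{y}^{-1}(x)}(y))$, the natural candidate is essentially $J(x,y) = (f_{y}(x), y)$ (or an inverse thereof), and one checks directly that $s\circ J = J\circ c$; iterating this on triples gives the conjugation on $X^{3}$ and then on all $X^{n}$.

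The main obstacle is the self-distributivity step: extracting a clean identity from the three entangled component equations produced by the braid equation for $s$ is the only place in the argument where real work is required, since the formula defining $\triangleright$ involves both $f$, $g$, and an inverse $f^{-1}$, so a careful bookkeeping via non-degeneracy is needed before the braid relation simplifies to the rack axiom.
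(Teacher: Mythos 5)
The paper itself offers no proof here: the proposition is stated with a \qed and is quoted from \cite{AG} (it is the classical ``derived solution'' construction of Soloviev and Lu--Yan--Zhu), so your attempt can only be measured against the standard argument. Two of your ingredients are exactly right: the computation showing that the braid equation for $c(x,y)=(x\triangleright y,x)$ reduces to self-distributivity of $\triangleright$ (both sides applied to $(x,y,z)$ have second and third coordinates $x\triangleright y$ and $x$, and first coordinates $(x\triangleright y)\triangleright(x\triangleright z)$ versus $x\triangleright(y\triangleright z)$), and the guess $J(x,y)=(f_y(x),y)$ for the degree-two intertwiner, which indeed satisfies $c\circ J=J\circ s$ because $f_y(x)\triangleright y=f_{f_y(x)}(g_{f_y^{-1}(f_y(x))}(y))=f_{f_y(x)}(g_x(y))$, which is the first coordinate of $J(s(x,y))$.

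However, there are two genuine gaps. First, expanding the braid equation for $s(x,y)=(g_x(y),f_y(x))$ componentwise gives the three identities $g_{g_x(y)}g_{f_y(x)}=g_xg_y$, $f_{g_{f_y(x)}(z)}(g_x(y))=g_{f_{g_y(z)}(x)}(f_z(y))$ and $f_zf_y=f_{f_z(y)}f_{g_y(z)}$; none of these is the self-distributive law for $\triangleright$, and in particular it is not true that ``the first component gives precisely the self-distributive law'' --- the first component does not involve $\triangleright$ at all, and extracting self-distributivity directly requires a nontrivial combination of all three identities with substitutions through $f^{-1}$. The efficient route is the reverse of your order: establish the equivalence first and then obtain the braid equation for $c$ (hence self-distributivity) by conjugation. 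Second, and more seriously, ``iterating'' $J$ does not produce the intertwiners in higher degree: the equivalence asserted here is an intertwining of the $\mathbb{B}_n$-actions on $X^n$ for every $n$, and for instance $(J\times\id)(\id\times J)(x,y,z)=(f_{f_z(y)}(x),f_z(y),z)$, which is not the correct map. The correct degree-three map is the triangular ``guitar'' map $J_3(x,y,z)=(f_zf_y(x),f_z(y),z)$ (and analogously $J_n$ applies to each coordinate the composite of the $f$'s of all coordinates to its right), and checking that $J_3$ intertwines the braiding in position one uses precisely the third component identity $f_zf_y=f_{f_z(y)}f_{g_y(z)}$. Without constructing and verifying this tower the claimed equivalence of $s$ and $c$ is not proved. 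A smaller imprecision: $y\mapsto x\triangleright y$ is not a composition of bijections, since the subscript $f_y^{-1}(x)$ depends on $y$; its bijectivity follows instead from the bijectivity of $s$ itself (the inverse sends $w$ to the second coordinate of $s^{-1}(f_x^{-1}(w),x)$), or for free from the conjugacy of $c$ with $s$.
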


Any rack and a $2$-cocycle on it give rise to a braided vector space.
Let $(X,\triangleright)$ be a rack and $q:X\times X \to \CC^{\times}$ be a function with notation
$q_{xy}:= q(x,y)$ for all $i,j\in X$
such that 
\begin{equation}\label{eq:cond-rack-cocycle}
 q_{x,y\triangleright z}q_{y,z} = q_{x\triangleright y, x \triangleright z}q_{x,z}\qquad\text{ for all }x,y,z\in X 
\end{equation}
Then the vector space $V=\CC X$ with basis the elements of $X$ is a braided vector space 
with braiding $c^{q}:\CC X\ot \CC X \to \CC X\ot \CC X$ given by 
$$
c^{q}(x\ot y) = q_{x,y}\, x\triangleright y \ot x\qquad \text{ for all }x,y\in X.
$$
We denote this braided vector space by $(\CC X, c^{q})$ and the corresponding Nichols
algebra by $\B(X,c^{q})$. The function $q:X\times X \to \CC^{\times}$ satisfying \eqref{eq:cond-rack-cocycle}
is called a \textit{rack 2-cocycle}.

\subsubsection{t-equivalence between braided vector spaces}
There is a relation between braided vector spaces weaker than
isomorphism but useful enough to deal with Nichols algebras.

\begin{definition}\cite[Definition 5.10]{AG}
We say that two braided vector spaces $(V,c)$ and 
$(W,d)$ are 
\textit{t-equivalent} if there is a collection of linear isomorphisms $U^{n}: V^{\ot n} \to W^{\ot n}$
intertwining the corresponding representations of the braid group $\mathbb{B}_{n}$, for all $n\geq 2$.
The collection $(U^{n})_{n\geq 2}$ is called a \textit{t-equivalence}.
\end{definition}

\smallbreak
\begin{remark}\label{rmk:t-equiv}\cite[Example 5.11]{AG}
{\em Let $(\CC X ,s^{F})$ be a braided vector space 
of set-theoretical type and
$(X, c)$ be the derived solution. 
Set $q_{xy} = F_{f^{-1}_{y}(x),y}$ for all $x,y\in X$.
If $q_{f_{z}(x),f_{z}(y)} = q_{xy}$ for all $x,y,z \in X$, then the braided vector spaces 
$(\CC X ,s^{F})$ and $(\CC X ,c^{q}))$ are t-equivalent.}
\end{remark}

\smallbreak
In our example, the braided vector space
$(W^\epsilon_{i,m},c^\epsilon_{i,m})$ can be described using the set-theoretical solution 
$(\Zn,s^{F})$ where 
$s(w_{\ell},w_{r}) = (w_{-r},w_{\ell+2r})$
and $F= F_{i,m,\ell,r}^{\epsilon} = 
\epsilon\, \xi^{2i(m-i-\ell-r)}$. In particular, $g_{\ell}(r) = -r$ and $f_{r}(\ell) = \ell +2r$ for all $\ell, r\in \Zn$
The corresponding derived solution has rack structure $\ell \triangleright r = 2\ell -r $, since
$$
\ell \triangleright r  = f_{\ell}(g_{f^{-1}_{r}(\ell)}(r)) = f_{\ell}(-r) = -r+2\ell  \qquad\text{ for all }\ell,r\in \Zn. 
$$
Hence, $(\Zn,\triangleright) =\D_{n}$ is the dihedral rack. With respect to the cocycle we have 
$q_{\ell,r} = \epsilon\, \xi^{2i(m-i-(\ell-r))}$:
$$
q_{\ell,r} = F_{f^{-1}_{r}(\ell),r} = F_{\ell-2r,r}=\epsilon\, \xi^{2i(m-i-(\ell-2r+r))}=
\epsilon\, \xi^{2i(m-i-(\ell-r))}.
$$
In conclusion, $(W^\epsilon_{i,m},c^\epsilon_{i,m})$ is t-equivalent to the braided vector space
$(\CC \D_{n}, d^\epsilon_{i,m})$ with $\CC \D_{n} =\CC\{x_{\ell}:\ 0\leq \ell \leq n-1\}$ and 
braiding $c^{q} = d^\epsilon_{i,m}$ given by
$$
d^\epsilon_{i,m}(x_{\ell}\otimes x_{r}) = \epsilon\, \xi^{2i(m-i-(\ell-r))} x_{2\ell -r}\ot x_{\ell}\qquad \text{ for all }\ell, r \in \Zn.
$$

\begin{lemma}\label{lem:t-equiv-graded-iso}\cite[Lemma  6.1]{AG}
If $(V,c)$ and $(W,d)$ are t-equivalent braided vector spaces, then the corresponding Nichols algebras 
$\B(V)$ and $\B(W)$ are isomorphic as
graded vector spaces. In particular, one has finite dimension, resp. finite GK-dimension,
if and only if the other one has.
\qed
\end{lemma}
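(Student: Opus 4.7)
The plan is to exploit the fact that the Nichols algebra is completely determined, as a graded vector space, by the representations of the braid groups $\mathbb{B}_{n}$ on the tensor powers. Recall from Subsection \ref{subsec:YD-Nichols} that $\B(V) = \bigoplus_{n\geq 0} V^{\ot n}/\Ker QS_{n}$ where $QS_{n} = \sum_{\sigma \in \mathbb{S}_{n}} \rho_{n}(M(\sigma))$ is built as a sum of operators coming from the Matsumoto section $M\colon \mathbb{S}_{n}\to \mathbb{B}_{n}$ and the braid group representation $\rho_{n}\colon \mathbb{B}_{n}\to \GL(V^{\ot n})$. Thus $QS_{n}$ is intrinsically defined from $\rho_{n}$ alone.

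The key observation is that if $(U^{n})_{n\geq 2}$ is a t-equivalence between $(V,c)$ and $(W,d)$, then by definition each $U^{n}$ intertwines the $\mathbb{B}_{n}$-actions. Consequently $U^{n}$ commutes with any linear combination of braid group elements acting on $V^{\ot n}$ and $W^{\ot n}$, respectively; in particular,
$$
U^{n}\circ QS_{n}^{V} = QS_{n}^{W}\circ U^{n} \qquad\text{ for all }n\geq 2.
$$
Since $U^{n}$ is a linear isomorphism, this forces $U^{n}(\Ker QS_{n}^{V}) = \Ker QS_{n}^{W}$. Setting $U^{0}=\id_{\Bbbk}$ and $U^{1}=U^{2}|_{V}$ (or noting that degree zero and one are trivial), one obtains an induced graded linear isomorphism
$$
\overline{U}\colon \B(V) = \bigoplus_{n\geq 0} V^{\ot n}/\Ker QS_{n}^{V} \;\longrightarrow\; \bigoplus_{n\geq 0} W^{\ot n}/\Ker QS_{n}^{W} = \B(W).
$$

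Finally, the conclusions about finite dimension and finite Gelfand--Kirillov dimension are immediate: both invariants depend only on the dimensions $\dim \B^{n}(V)$, which are preserved by the graded isomorphism $\overline{U}$. There is essentially no obstacle here beyond carefully tracking that t-equivalence gives exactly the compatibility needed with $QS_{n}$; no compatibility with the algebra or coalgebra structure of $\B(V)$ is required for the graded vector space statement, which is why t-equivalence (strictly weaker than isomorphism of braided vector spaces) suffices.
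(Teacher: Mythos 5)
Your proof is correct and is essentially the argument behind the cited result \cite[Lemma 6.1]{AG}, which the paper itself does not reprove: an intertwiner of the braid group actions commutes with the quantum symmetrizer $QS_n$ (a linear combination of the operators $\rho_n(M(\sigma))$), hence maps $\Ker QS_n^V$ onto $\Ker QS_n^W$ and induces the graded linear isomorphism. The only cosmetic slip is the definition $U^1=U^2|_V$, which does not literally parse since $U^2$ is defined on $V^{\ot 2}$; your parenthetical alternative (degrees $0$ and $1$ are $\kk$ and $V\cong W$, the latter because $U^2$ forces $\dim V=\dim W$) is the right way to handle it.
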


\bigbreak
As a consequence of the lemma above, we have the following:

\begin{corollary}\label{cor:Wim-iso-gvs-Dn}
The Nichols algebras $\B(W^\epsilon_{i,m},c^\epsilon_{i,m})$ are isomorphic
as graded vector spaces to the Nichols 
algebras $\B(\D_{n}, d^\epsilon_{i,m})$. 
\qed
\end{corollary}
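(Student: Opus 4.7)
The plan is to deduce the corollary as a direct consequence of Lemma \ref{lem:t-equiv-graded-iso} once the $t$-equivalence between $(W^\epsilon_{i,m},c^\epsilon_{i,m})$ and $(\CC\D_n, d^\epsilon_{i,m})$ has been established. All the ingredients are already laid out in the preceding paragraphs, so the work is mostly one bookkeeping verification.

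First I would recall that $(W^\epsilon_{i,m},c^\epsilon_{i,m})$ has been identified as the braided vector space $(\CC\Zn, s^F)$ of set-theoretical type arising from the non-degenerate solution $s(\ell,r)=(-r,\ell+2r)$ (so $g_\ell(r)=-r$ and $f_r(\ell)=\ell+2r$) together with the cocycle $F_{\ell,r}=\epsilon\,\xi^{2i(m-i-\ell-r)}$. The computation carried out just before the statement shows that the derived rack is the dihedral rack $\D_n$ with $\ell\triangleright r = 2\ell-r$, and that the associated rack $2$-cocycle is
\[
q_{\ell,r}=F_{f^{-1}_r(\ell),r}=\epsilon\,\xi^{2i(m-i-(\ell-r))}.
\]

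Next I would invoke Remark \ref{rmk:t-equiv}, whose hypothesis is the $f$-invariance $q_{f_z(\ell),f_z(r)}=q_{\ell,r}$ for all $\ell,r,z\in\Zn$. This is the only nontrivial check and it is immediate from the formula above: $q_{\ell,r}$ depends on $(\ell,r)$ only through the difference $\ell-r$, and $f_z(\ell)-f_z(r)=(\ell+2z)-(r+2z)=\ell-r$. Hence the hypothesis of Remark \ref{rmk:t-equiv} is satisfied, and the braided vector spaces $(W^\epsilon_{i,m},c^\epsilon_{i,m})$ and $(\CC\D_n, d^\epsilon_{i,m})$ are $t$-equivalent.

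Finally, Lemma \ref{lem:t-equiv-graded-iso} applied to this $t$-equivalence yields an isomorphism of graded vector spaces $\B(W^\epsilon_{i,m},c^\epsilon_{i,m})\simeq \B(\D_n, d^\epsilon_{i,m})$, which is exactly the assertion of the corollary. There is essentially no obstacle here: the only step that requires any verification at all is the difference-invariance of $q_{\ell,r}$, which is a one-line check, and the rest of the argument is a citation of previously established results in the paper.
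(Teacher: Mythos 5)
Your proposal is correct and follows the same route as the paper: identify $(W^\epsilon_{i,m},c^\epsilon_{i,m})$ as the set-theoretical braided vector space $(\CC\Zn,s^F)$, pass to the derived dihedral rack with cocycle $q_{\ell,r}=\epsilon\,\xi^{2i(m-i-(\ell-r))}$, invoke Remark \ref{rmk:t-equiv} to get the t-equivalence, and conclude via Lemma \ref{lem:t-equiv-graded-iso}. The only difference is that you explicitly verify the invariance hypothesis $q_{f_z(\ell),f_z(r)}=q_{\ell,r}$ (which holds since $q$ depends only on $\ell-r$ and $f_z$ is a common shift), a check the paper leaves implicit.
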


As a consequence of the corollary above,  
$\B(W^\epsilon_{i,m},c^\epsilon_{i,m})$ has the same
(Gelfand-Kirillov) dimension as $\B(\D_{n}, d^\epsilon_{i,m})$. 
In case $n$ is prime, these
dimensions are know due to a recent result of Heckenberger, 
Mehir and Vendramin.
The following theorem is a direct consequence of \cite[Theorem 1.6]{HMV}.

\begin{theorem}\label{thm:hecken-mehir-vendra}
Let $n$ be an odd prime. Then  $\B(\D_{n}, d^\epsilon_{i,m})$ is finite-dimensional 
if and only if $n=3$ and there exists a basis $\{y_{k}\}_{k\in \mathbb{Z}_{n}}$
of $\CC\D_{n}$
such that $d^\epsilon_{i,m}(y_{\ell}\ot y_{r}) = - y_{2\ell -r}\ot y_{r}$.
\qed
\end{theorem}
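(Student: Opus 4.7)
The plan is to appeal directly to the classification result in \cite[Theorem 1.6]{HMV}, which characterizes the finite-dimensional Nichols algebras of rack-cocycle type over the dihedral rack $\D_n$ for $n$ an odd prime. Roughly, that result asserts that finite dimensionality is possible only when $n=3$ and the cocycle is equivalent---via a diagonal rescaling of the basis of $\CC\D_n$---to the constant cocycle with value $-1$, which produces the Fomin--Kirillov algebra $\mathcal{E}_3$ of dimension $12$.

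First I would observe that our cocycle $q_{\ell,r}=\epsilon\xi^{2i(m-i-(\ell-r))}$ lies squarely in the class covered by HMV: it depends only on the difference $\ell-r$, factoring as $A B^{\ell-r}$ with $A=\epsilon\xi^{2i(m-i)}$ and $B=\xi^{-2i}$. Combined with the fact that $\D_n$ is the dihedral rack of prime order $n$, this places $(\D_n, d^\epsilon_{i,m})$ precisely within the scope of \cite[Theorem 1.6]{HMV}, and the ``only if'' direction follows directly: finite dimensionality of $\B(\D_n, d^\epsilon_{i,m})$ forces $n=3$ and the existence of a basis $\{y_k\}$ in which the braiding takes the displayed constant form.

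For the ``if'' direction, suppose $n=3$ and that such a basis $\{y_k\}$ exists. Then $\B(\D_3, d^\epsilon_{i,m})$ is isomorphic, as a braided algebra, to the Nichols algebra of $\D_3$ with the constant cocycle $-1$, i.e. to $\mathcal{E}_3$, which is $12$-dimensional and hence finite-dimensional. The change of basis itself can be exhibited explicitly via the ansatz $y_k=\mu^k x_k$: a direct computation yields
\begin{equation*}
d^\epsilon_{i,m}(y_\ell\otimes y_r) = \epsilon\,\xi^{2i(m-i)}\bigl(\mu^{-2}\xi^{-2i}\bigr)^{\ell-r}\, y_{2\ell-r}\otimes y_\ell,
\end{equation*}
so that the choice $\mu=\xi^{-i}$ transforms $d^\epsilon_{i,m}$ into the constant cocycle with scalar $\epsilon\xi^{2i(m-i)}$, which equals $-1$ precisely when $\epsilon=-1$ and $i(m-i)\equiv 0\pmod 3$.

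The main obstacle is simply translating our cocycle into HMV's framework and checking that their theorem applies verbatim; once that is done, both directions of the equivalence follow, the forward direction being the substantive content of \cite[Theorem 1.6]{HMV} and the reverse direction reducing to the Fomin--Kirillov identification together with the diagonal basis computation above.
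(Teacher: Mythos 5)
Your proposal is correct and takes essentially the same route as the paper, which proves this theorem simply by invoking \cite[Theorem 1.6]{HMV}; your explicit diagonal rescaling $y_k=\xi^{-ik}x_k$ reproduces verbatim the computation the paper carries out afterwards in the case analysis for $n=3$. (Note only that your computation yields $-y_{2\ell-r}\otimes y_{\ell}$ in the second tensor factor, consistent with the rack-braiding convention; the $y_r$ in the paper's statement appears to be a typo.)
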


\begin{remark}\label{rmk:notation-braiding}{\em
 For $i=0$, all braided vector spaces $(\D_{n}, d^\epsilon_{0,m})$ coincide. For simplicity, 
we write $d_{0} = d^{-}_{0,m}$ for the braiding corresponding to the 
parameters $i=0$ and $\epsilon = -1$.}
 \end{remark}

\begin{remark}{\em
The braided vector space 
$(\CC \D_{n}, d_{0})$ may be
realized as a Yetter-Drinfeld module over 
the dihedral group $\mathbb{D}_{n}$. 
The braiding is given by 
$$
d_{0}(x_{\ell}\otimes x_{r}) =  - x_{2\ell -r}\ot x_{\ell}\qquad \text{ for all }\ell, r \in \Zn.
$$
The corresponding object is given in group-theoretical
terms by the simple Yetter-Drinfeld module
$M(\Oc_{g},\sgn)$ associated with the conjugacy class
$\Oc_{g}$ of $g$ and the character of the centralizer
$C_{\mathbb{D}_{n}}(g) =\langle g\rangle$ given by the sign representation, i.e. $\sgn(g) = -1$.
In conclusion, $\B(W^-_{0,m},c^\epsilon_{0,m}) $ is isomorphic as graded vector space 
to $\B(\Oc_{g},\sgn)$.}
\end{remark}

The Nichols algebras over $\mathbb{D}_{n}$
were intensively studied and up to a
possible exception, they are all 
infinite-dimensional. The following theorem
extend the results of \cite[Theorem 3.1]{AF}.

\begin{theorem}\label{thm:Nichols-Dn} \cite[Theorem 4.8]{AHS},
\cite[Theorem 1.6]{HMV}.
Assume $n\geq 5$ is odd.
All Nichols algebra over 
$\mathbb{D}_{n}$ are infinite-dimensional with 
the possible exception of $\B(\Oc_{g}, \sgn)$, up to isomofphism, when $n$ is not prime. 
\qed
\end{theorem}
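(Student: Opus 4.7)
The strategy is to combine the two cited classification theorems and handle the few remaining families by a direct braiding-matrix argument. First I would enumerate the simple Yetter--Drinfeld modules over $\kk\mathbb{D}_n$ by the standard Wigner--Mackey procedure: they are pairs $M(\Oc, \rho)$ where $\Oc$ runs over conjugacy classes of $\mathbb{D}_n$ and $\rho$ over irreducible representations of the centralizer of a chosen representative. For $n$ odd, the non-trivial classes are the $(n-1)/2$ rotation classes $\Oc_k=\{h^k, h^{-k}\}$, each with abelian centralizer $\langle h\rangle$, and the single involution class $\Oc_g$ with centralizer $\langle g\rangle\simeq C_2$, so that for $\Oc_g$ the character $\rho$ is either trivial or the sign.

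For each rotation class $\Oc_k$ the associated rack is trivial, because the centralizer is abelian; hence $M(\Oc_k,\rho)$ is a $2$-dimensional braided vector space of diagonal type whose braiding matrix is explicitly determined by $k$ and $\rho(h)$. A direct inspection of the generalized Dynkin diagram against Heckenberger's list \cite{He} (cf.\ \cite[Table 15.1]{HS}) shows that for $n\geq 5$ odd none of the finite-type entries is compatible with the resulting arithmetic data, so $\dim\B(M(\Oc_k,\rho))=\infty$ in every case. This diagonal-type analysis is the content (in the dihedral case) of \cite[Theorem~4.8]{AHS}. For $M(\Oc_g,+1)$ the braiding satisfies $c(x_i\ot x_i)=x_i\ot x_i$ for every $i\in\Zn$, whence $\dim\B(M(\Oc_g,+1))=\infty$ by the elementary observation recalled in \S\ref{subsec:YD-Nichols}. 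Combined, these steps reduce the question to the single remaining module $\B(\Oc_g,\sgn)$.

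Finally, for this last candidate I would invoke Theorem~\ref{thm:hecken-mehir-vendra}, which is a direct consequence of \cite[Theorem~1.6]{HMV}: when $n\geq 5$ is an odd prime, $\B(\D_n,d_0)$ is infinite-dimensional, and since $(\CC\D_n,d_0)$ is precisely the braided vector space underlying $M(\Oc_g,\sgn)$ (as observed just before Theorem~\ref{thm:hecken-mehir-vendra}), this rules out the last remaining possibility for $n$ prime. For $n$ odd composite no analogous obstruction is currently known, which explains the ``possible exception'' in the statement. The principal obstacle in the argument is therefore concentrated in \cite[Theorem~1.6]{HMV}, whose decorated-rack machinery does not (at present) extend to composite $n$; absent such an extension, the finiteness of $\B(\Oc_g,\sgn)$ for composite odd $n\geq 5$ must remain open.
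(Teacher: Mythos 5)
The paper does not actually prove this statement: it is recorded with a \qed\ as a quotation from the literature, the intended ``proof'' being precisely the citations \cite[Theorem 4.8]{AHS} and \cite[Theorem 1.6]{HMV}, together with \cite[Theorem 3.1]{AF}, which the surrounding text names as the result being extended. So the only thing to compare your argument against is how those citations are meant to fit together, and at the level of simple modules your reconstruction is essentially the right one: the simple objects are the $M(\Oc,\rho)$; the rotation classes $\Oc_k=\{h^k,h^{-k}\}$ give rank-two diagonal braidings; the trivial character on $\Oc_g$ gives $c(x\ot x)=x\ot x$ and hence infinite dimension by the observation in \S\ref{subsec:YD-Nichols}; and \cite[Theorem 1.6]{HMV} disposes of $M(\Oc_g,\sgn)$ exactly when $n$ is prime. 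Three corrections of detail. The rack on $\Oc_k$ is trivial because $\Oc_k$ lies in the abelian subgroup $\langle h\rangle$, so its elements commute under conjugation; the abelianness of the centralizer only gives that $\rho$ is a character, which is a different point. The step carrying the actual content in the rotation case is asserted rather than performed: you should record that the braiding matrix is $q_{11}=q_{22}=q=\rho(h^k)$ and $q_{12}q_{21}=q^{-2}$ with $q^{n}=1$ and $n$ odd, and check that no row of \cite[Table 15.1]{HS} admits such symmetric data unless $q=1$ (in which case one is done at once); this verification is the content of \cite[Theorem 3.1]{AF}, not of \cite[Theorem 4.8]{AHS}. Finally, the central class $\{1\}$ is skipped; it is harmless, since the braiding there is symmetric, but it should be mentioned.

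The one place where I would flag a genuine gap is the scope of the statement. As worded, the theorem concerns all Nichols algebras over $\mathbb{D}_n$, not only those of simple Yetter--Drinfeld modules. Your argument reduces a general semisimple module to its simple summands and correctly concludes that the presence of any summand other than $M(\Oc_g,\sgn)$ forces infinite dimension, but it says nothing about direct sums of two or more copies of $M(\Oc_g,\sgn)$, which for composite $n$ are not covered by \cite{HMV} and are not literally ``the possible exception $\B(\Oc_g,\sgn)$''. Handling several simple summands simultaneously is precisely the kind of statement \cite[Theorem 4.8]{AHS} supplies, whereas you have assigned that citation to the single-summand diagonal-type analysis. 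Either restrict the claim explicitly to simple modules, or add the multi-summand step with the correct attribution.
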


\bigbreak

\subsubsection*{The Nichols algebras $\B(W^{-}_{i,m},c^{-}_{i,m})$ for $n=3$ and $\epsilon = -1$.}\

\bigbreak
Note that, for $n=3$ one has that $\mathbb{D}_{3} = \mathbb{S}_{3}$. 
Since all finite-dimensional Nichols algebras over $\mathbb{S}_{3}$ are known,
we can characterize all finite-dimensional
Nichols algebras over $K_{3}$ thanks to the description of
M. Gra\~na \cite{G} who studied Nichols algebras of low
dimension.

\bigbreak

\noindent
\textbf{Case $i=0$:}  
The Nichols algebra $\B(\D_{3}, d_{0})$ associated with the braided vector space 
$(\CC \D_{3}, d_{0})$ is isomorphic to the well-known \textit{Fomin-Kirillov algebra} $\mathcal{E}_{3}$.
Indeed, $\CC \D_{3} = \{x_{0},x_{1},x_{2}\}$ and 
$$
d_{0}(x_{i}\otimes x_{j}) =  - x_{k}\ot x_{j}\qquad \text{ for }i,j,k \text{ all distinct.}
$$
Its Nichols algebra has dimension $12$, top degree $4$ and Hilbert series 
$\mathcal{H}(t) = t^{4} + 3t^{3} + 4t^{2} + 3t + 1$.
It is the quadratic algebra generated by the elements $x_{0},x_{1},x_{2}$ satisfying the relations
\begin{eqnarray}
x_{i}^{2}= 0 \quad \text{ for all }i\nonumber \\
x_{0}x_{1} + x_{1}x_{2} + x_{2}x_{0} = 0\label{eq:rel-Fomin-Kirillov} \\
x_{0}x_{2} + x_{2}x_{1} + x_{1}x_{0} = 0\nonumber
\end{eqnarray}

By the previous discussion we know that $\B(W^{-}_{0,m},c^{-}_{0,m})$ is t-equivalent to 
$\mathcal{E}_{3} $. The following theorem shows that they are indeed isomorphic.

\begin{theorem}\label{thm:pres-Wom}
Let $m\in \mathbb{Z}_{3}$. Then $\B(W^{-}_{0,m},c^{-}_{0,m})\simeq\mathcal{E}_{3} $; in particular,
it admits the presentation \eqref{eq:rel-Fomin-Kirillov} and the following one as the algebra
generated by the elements $w_{0},w_{1},w_{2}$ satisfying the relations
\begin{eqnarray}
w_{0}^{2}= 0, \qquad w_{1}w_{2}=0, \qquad w_{2}w_{1}=0,\nonumber \\
w_{0}w_{1} + w_{1}w_{0} + w_{2}^{2} = 0,\label{eq:presentation-W0m} \\
w_{0}w_{2} + w_{2}w_{0} + w_{1}^{2} = 0.\nonumber
\end{eqnarray}
\end{theorem}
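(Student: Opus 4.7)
The plan is to prove all three assertions of the theorem simultaneously through a chain of surjections $\mathcal{E}_{3}\twoheadrightarrow A\twoheadrightarrow\B(W^{-}_{0,m})$, where $A$ is the quadratic algebra with generators $w_{0},w_{1},w_{2}$ and defining relations \eqref{eq:presentation-W0m}; Corollary \ref{cor:Wim-iso-gvs-Dn} will then force both arrows to be isomorphisms by matching total dimensions to $12$. Specialising \eqref{eq:braiding-W} to $i=0$ and $\epsilon=-1$ gives $c^{-}_{0,m}(w_{\ell}\otimes w_{r})=-w_{-r}\otimes w_{\ell+2r}$ for $\ell,r\in\mathbb{Z}_{3}$. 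A direct tabulation of the nine values shows that each of the five relations in \eqref{eq:presentation-W0m} lies in $\mathcal{J}_{2}=\Ker(\id+c^{-}_{0,m})$; for instance $c^{-}_{0,m}(w_{0}\otimes w_{0})=-w_{0}\otimes w_{0}$, $c^{-}_{0,m}(w_{1}\otimes w_{2})=-w_{1}\otimes w_{2}$ and $c^{-}_{0,m}(w_{2}\otimes w_{1})=-w_{2}\otimes w_{1}$ yield the three monomial relations, while for the first cyclic relation one checks $c^{-}_{0,m}(w_{0}\otimes w_{1}+w_{1}\otimes w_{0}+w_{2}\otimes w_{2})=-(w_{0}\otimes w_{1}+w_{1}\otimes w_{0}+w_{2}\otimes w_{2})$, and similarly for the second. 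This provides the surjection $A\twoheadrightarrow\B(W^{-}_{0,m})$.

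\textbf{Algebra map from $\mathcal{E}_{3}$.} In $A$, set $y_{k}:=w_{0}+\xi^{k}w_{1}+\xi^{2k}w_{2}$ for $k=0,1,2$. Using \eqref{eq:presentation-W0m}, the general formula $(aw_{0}+bw_{1}+cw_{2})^{2}=(b^{2}-ac)\,w_{1}^{2}+(c^{2}-ab)\,w_{2}^{2}$ holds in $A$; substituting $a=1,\,b=\xi^{k},\,c=\xi^{2k}$ and invoking $\xi^{3k}=1$ gives $y_{k}^{2}=0$. A longer but routine expansion then verifies the Fomin-Kirillov cyclic relations $y_{0}y_{1}+y_{1}y_{2}+y_{2}y_{0}=0$ and $y_{0}y_{2}+y_{2}y_{1}+y_{1}y_{0}=0$ in $A$: after reducing each product modulo \eqref{eq:presentation-W0m} to a linear combination of $w_{0}w_{1},\,w_{0}w_{2},\,w_{1}^{2},\,w_{2}^{2}$, each of the four resulting coefficients is a cyclic sum of powers of $\xi$ that vanishes by $1+\xi+\xi^{2}=0$. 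This yields an algebra homomorphism $\phi\colon\mathcal{E}_{3}\to A$ with $x_{k}\mapsto y_{k}$. As the Vandermonde matrix $(\xi^{jk})_{0\leq j,k\leq 2}$ is invertible, $\{y_{0},y_{1},y_{2}\}$ is a basis of $W^{-}_{0,m}$, so $\phi$ is surjective; in particular $\dim A\leq\dim\mathcal{E}_{3}=12$.

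\textbf{Conclusion and main obstacle.} Combining the two surjections yields $\mathcal{E}_{3}\twoheadrightarrow A\twoheadrightarrow\B(W^{-}_{0,m})$. By Corollary \ref{cor:Wim-iso-gvs-Dn}, $\B(W^{-}_{0,m})$ is isomorphic as a graded vector space to $\B(\D_{3},d_{0})\simeq\mathcal{E}_{3}$, so it has total dimension $12$ and Hilbert series $t^{4}+3t^{3}+4t^{2}+3t+1$. Both surjections must therefore be isomorphisms, simultaneously establishing $\B(W^{-}_{0,m})\simeq\mathcal{E}_{3}$, the Fomin-Kirillov presentation via the $y_{k}$'s, and the $w$-presentation \eqref{eq:presentation-W0m} via $A$. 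The principal obstacle is the bookkeeping in the two cyclic identities; here the crucial trick is the choice of the $y_{k}$'s as the $\mathbb{Z}_{3}$-discrete Fourier transform of $(w_{0},w_{1},w_{2})$, which diagonalises the latent $\mathbb{Z}_{3}$-cyclic symmetry of \eqref{eq:presentation-W0m} (swapping $w_{1}\leftrightarrow w_{2}$ up to a scalar) and makes the required coefficient cancellations essentially automatic.
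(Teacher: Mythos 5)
Your proof is correct, but it is organized differently from the paper's. The paper first upgrades the t-equivalence to an actual isomorphism of braided vector spaces $\varphi\colon(\CC\D_{3},d_{0})\to(W^{-}_{0,m},c^{-}_{0,m})$, $x_{k}\mapsto w_{0}+\xi^{k}w_{1}+\xi^{2k}w_{2}$, by checking directly that $\varphi\otimes\varphi$ intertwines the braidings; this yields $\B(W^{-}_{0,m})\simeq\mathcal{E}_{3}$ at once (even as braided Hopf algebras) and transports the Fomin--Kirillov presentation. For the $w$-presentation the paper then computes the quadratic approximation $\hat{\B}_{2}(W^{-}_{0,m})$, checks that its degree-$4$ component is spanned by $w_{0}w_{1}w_{0}w_{2}$ and that it vanishes in degree $5$, and invokes \cite[Theorem 6.4]{AG} to conclude $\hat{\B}_{2}=\B$. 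You instead stay entirely at the level of algebras: you verify the five relations lie in $\Ker(\id+c^{-}_{0,m})=\mathcal{J}_{2}$ to get $A\twoheadrightarrow\B(W^{-}_{0,m})$, build $\mathcal{E}_{3}\twoheadrightarrow A$ via the same Fourier-transform generators $y_{k}$ (now checked against the relations of $A$ rather than against the braiding), and close the sandwich with the dimension count $\dim\B(W^{-}_{0,m})=12$ supplied by Corollary \ref{cor:Wim-iso-gvs-Dn}. Your route avoids both the braided-vector-space verification and the quadratic-approximation criterion of \cite{AG}, at the price of leaning on the t-equivalence for the dimension and of delivering only an algebra (rather than braided) isomorphism, which is all the statement requires. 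Two small remarks: the vanishing of the coefficients in your cyclic identities is really a telescoping over the cyclic sum of differences such as $\xi^{k}-\xi^{j}$, not an instance of $1+\xi+\xi^{2}=0$ (the conclusion is nonetheless correct, as I checked); and for the surjection $A\twoheadrightarrow\B(W^{-}_{0,m})$ you only need the relations to lie in $\mathcal{J}_{2}$, not to span it, which is exactly what you use.
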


\begin{proof}
By the remark above, the braided vector spaces are t-equivalent. We show here that 
moreover, 
$(W^{-}_{0,m},c^{-}_{0,m})$ is isomorphic to $(\CC \D_{3}, d_{0})$ as braided
vector space. Indeed,
the isomorphism is given by the following linear map 
\begin{equation}\label{eq:def-linear-trans}
\varphi: \CC \D_{3} \to W^{-}_{0,m},\qquad \varphi(x_{k}) = 
w_{0} + \xi^{k} w_{1}+\xi^{2k} w_{2},\qquad\text{ for all }0\leq k\leq 2,   
 \end{equation}
which is a morphism between braided vector spaces, since
\begin{align*}
c_{0,m}^{-}(\varphi(x_{\ell})\ot \varphi(x_{r})) & =
c_{0,m}^{-}\big((w_{0} + \xi^{\ell} w_{1}+\xi^{2\ell} w_{2})\ot (w_{0} + \xi^{r} w_{1}+\xi^{2r} w_{2})\big)\\
& = c_{0,m}^{-}(w_{0}\ot w_{0}) +  \xi^{r} c_{0,m}^{-}(w_{0}\ot w_{1}) +\xi^{2r} c_{0,m}^{-}(w_{0}\ot w_{2}) +\\
& \ + \xi^{\ell} c_{0,m}^{-}(w_{1}\ot w_{0}) +  \xi^{\ell+r} c_{0,m}^{-}(w_{1}\ot w_{1}) +\xi^{\ell+2r} c_{0,m}^{-}(w_{1}\ot w_{2}) +\\
& \ +\xi^{2\ell}c_{0,m}^{-}(w_{2}\ot w_{0}) +  \xi^{2\ell+r} c_{0,m}^{-}(w_{2}\ot w_{1}) +\xi^{2\ell+2r} c_{0,m}^{-}(w_{2}\ot w_{2})\\
& = -w_{0}\ot w_{0} -  \xi^{r} w_{2}\ot w_{2} -\xi^{2r} w_{1}\ot w_{1} +\\
& \ - \xi^{\ell} w_{0}\ot w_{1} -  \xi^{\ell+r} w_{2}\ot w_{0} -\xi^{\ell+2r} w_{1}\ot w_{2} +\\
& \ -\xi^{2\ell} w_{0}\ot w_{2} -  \xi^{2\ell+r} w_{2}\ot w_{1} -\xi^{2\ell+2r} w_{1}\ot w_{0}\\
&= - w_{0}\ot \varphi(x_{\ell}) - \xi^{\ell + r} w_{2}\ot \varphi(x_{\ell}) - \xi^{2\ell + 2r} w_{1}\ot \varphi(x_{\ell}) \\
&= - \varphi(x_{-r+2\ell}) \ot \varphi(x_{\ell}) = (\varphi\ot\varphi) (-x_{2\ell-r} \ot x_{\ell} ) = (\varphi\ot\varphi) d_{0} (x_{\ell}\ot x_{r})
\end{align*}
for all $\ell, r, m\in \mathbb{Z}_{3}$. 

Now consider the quadratic approximation 
$\hat{\B}_{2}(W^{-}_{0,m},c^{-}_{0,m}) = T(W^{-}_{0,m},c^{-}_{0,m}) / \mathcal{J}_{2}$. 
It is the
quadratic algebra presented by the elements
$\{w_{0},w_{1},w_{2}\}$ satisfying the relations
\begin{eqnarray}
w_{0}^{2}= 0, \qquad w_{1}w_{2}=0, \qquad w_{2}w_{1}=0,\nonumber \\
w_{0}w_{1} + w_{1}w_{0} + w_{2}^{2} = 0\nonumber \\
w_{0}w_{2} + w_{2}w_{0} + w_{1}^{2} = 0\nonumber
\end{eqnarray}
Using the quadratic relations one may show that the homogeneous component 
of degree $4$ of $\hat{\B}_{2}(W^{-}_{0,m})$ is linearly spanned by the element
$w_{0}w_{1}w_{0}w_{2}$ and the algebra vanishes in degree 5. 
Then by \cite[Theorem 6.4]{AG}, we have that $\hat{\B}_{2}(W^{-}_{0,m}) = \B(W^{-}_{0,m})$ and we obtain 
another presentation of this Nichols algebras.
\end{proof}

\bigbreak

\noindent
\textbf{Case $i=m$:}  
Write $d_{i} = d^{-}_{i,i}$ for $i\in \mathbb{Z}_{3}$. Then $(\CC \D_{3},d_{i})$ has braiding 
$$
d_{i}(x_{\ell}\ot x_{r}) =  - \xi^{-2i(\ell-r)} x_{2\ell -r}\ot x_{\ell}\qquad \text{ for all }\ell, r \in \mathbb{Z}_{3}.
$$
By \cite[Lemma 3.8]{G}, we have that  $(\CC \D_{3},d_{i})$ is of group-type and
performing the change of basis $y_{k}=\xi^{-ik} x_{k}$ yields
$$
d_{i}(y_{\ell}\ot y_{r}) =  -  y_{2\ell -r}\ot y_{\ell}\qquad \text{ for all }\ell, r \in \mathbb{Z}_{3}.
$$
Indeed, 
\begin{align*}
 d_{i}(y_{\ell}\ot y_{r}) & = \xi^{-i(\ell + r)}d_{i}(x_{\ell}\ot x_{r}) = -\xi^{-i(\ell + r)} \xi^{-2i(\ell-r)} x_{2\ell -r}\ot x_{\ell} = 
 -\xi^{-i\ell} \xi^{-i(2\ell-r)} x_{2\ell -r}\ot x_{\ell} \\
 & = -y_{2\ell -r}\ot y_{\ell}
\end{align*}
Hence, all braided vector spaces $(W^{-}_{i,i},c^{-}_{i,i})$ with $i\in \mathbb{Z}_{3}$ are t-equivalent to
$(\CC \D_{3}, d_{0})$. As a consequence, $\dim \B(W^{-}_{i,i},c^{-}_{i,i}) = 12$ for all 
$i\in \mathbb{Z}_{3}$. 

\bigbreak
Based on the fact above, we introduce the following notion. 
\bigbreak

\begin{definition}\label{def:twist-equiv}

We say that two braided vector spaces $(\CC X,s^{F})$ and $(\CC X,s^{G})$ of set-theoretical type are \textit{twist-equivalent} if the 
braided vector spaces corresponding to the derived solutions are \textit{twist-equivalent}, see \cite{AFGaV}, \cite{V}. 
Explicitly, write 
$s(x,y) = (g_{x}(y),f_{y}(x))$
and set 
$x\triangleright y = f_{x}(g_{f^{-1}_{y}(x)}(y))$ for all $x,y\in X$. Then 
$(\CC X,s^{F})$ and $(\CC X,s^{G})$ are twist-equivalent if there exists a map 
$\varphi: X\times X \to \CC^{\times}$ 
such that 
$$
\varphi(x,z)\varphi(x\triangleright y,x\triangleright z)\varphi(x\triangleright(y\triangleright z),x)\varphi(y\triangleright z,y)=
\varphi(y,z)\varphi(x,y\triangleright z)
\varphi(x\triangleright (y\triangleright z), x\triangleright y)\varphi(x\triangleright z, x)
$$
for all $x,y,z\in X$ and 
$$
\varphi(x,y) F_{f^{-1}_{y}(x),y} = \varphi(x\triangleright y,x) G_{f^{-1}_{y}(x),y}\qquad\text{ for all }x,y\in X.
$$
In such a case, we write $G = F^{\varphi}$. One may re-write the equation above in 
terms of the bijective maps $f , g : X \to \Fun (X,X)$ and 
replacing $x$ by $f_{y}(x)$ as
\begin{equation}\label{eq:twist-equiv}
\varphi(f_{y}(x),y) F_{x,y} = \varphi(f_{x}(g_{x}(y)),x) G_{x,y}\qquad\text{ for all }x,y\in X.
 \end{equation}
\end{definition}

From the very definition, Lemma \ref{lem:t-equiv-graded-iso} and the results in \cite{AFGaV}, we have the following: if $(\CC X,s^{F})$ and $(\CC X,s^{G})$ are twist-equivalent
as braided vector spaces, then their 
Nichols algebras $\B(\CC X,s^{F})$ and $\B(\CC X,s^{G})$ are isomorphic as graded
vector spaces.

\bigbreak

In our examples, taking 
$(W^{-}_{k,k},c_{k,k}^{-})=(\CC X,s^{F})$ and $(W^{-}_{i,i},c_{i,i}^{-})=(\CC X,s^{G})$ we have that $X=\mathbb{Z}_{3}$, 
$g_{\ell}(r) = -r$, $f_{r}(\ell) = \ell + 2r$, $F= F_{k,k,\ell,r}^{-} = -\xi^{-2k(\ell + r)}$, 
$G= F_{i,i,\ell,r}^{-} = -\xi^{-2i(\ell + r)}$, $\ell \triangleright r   = -r+2\ell$ for all $\ell, r\in \mathbb{Z}_{3}$ and 
\eqref{eq:twist-equiv} reads

\begin{equation}\label{eq:twist-equiv-example}
\xi^{-2k(\ell + r)} \varphi(\ell+2r,r) = \xi^{-2i(\ell + r)}\varphi(-r+2\ell,\ell) \qquad\text{ for all }x,y\in X.
 \end{equation}

\bigbreak

The map $\varphi_{i,k}: \mathbb{Z}_{3} \times\mathbb{Z}_{3}  \to \CC^{\times}$ given by 
$\varphi_{i,k} (\ell,r) = \xi^{(i-k)(\ell -2r)}$ clearly satisfies 
\eqref{eq:twist-equiv-example} and the cocycle condition above.
Thus, the braided vector spaces $(W^{-}_{i,i},c_{i,i}^{-})$ are twist-equivalent for all $i\in \mathbb{Z}_{3}$.
In particular, the corresponding Nichols algebras are isomorphic as graded vector spaces and we get 
that the top degree of both $\B (W^{-}_{1,1})$ and $\B (W^{-}_{2,2})$ is 4.

\begin{theorem}\label{thm:Nichols-alg-Wii}
{ With the notation above, the algebras $\B (W^{-}_{1,1})$ and $\B (W^{-}_{2,2})$ has the following 
presentation} 
\begin{align*}
\B(W^{-}_{1,1})& =\Bbbk \{w_{0},w_{1},w_{2}\ :\ w_{0}^{2},\ w_{1}w_{2},\ w_{2}w_{1},\ 
\xi^{2}w_{0}w_{2} + w_{2}w_{0} + \xi w_{1}^{2},\ {\xi^{2}w_{0}w_{1} +\xi  w_{1}w_{0} + w_{2}^{2}}\},\\
\B(W^{-}_{2,2})& =\Bbbk \{w_{0},w_{1},w_{2}\ :\ w_{0}^{2},\ w_{1}w_{2},\ w_{2}w_{1},\ 
\xi w_{0}w_{2} + w_{2}w_{0} + \xi^{2} w_{1}^{2},\ \xi w_{0}w_{1} + \xi^{2}w_{1}w_{0} + w_{2}^{2}\}.
\end{align*}
{ Moreover, these are isomorphic as graded algebras.} 
\end{theorem}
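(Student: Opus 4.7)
The plan is to mirror the approach used in the proof of Theorem~\ref{thm:pres-Wom}: compute $\mathcal{J}_2 = \Ker(\id + c^{-}_{i,i})$ explicitly to obtain the quadratic relations, verify that the quadratic approximation $\hat{\B}_2(W^{-}_{i,i})$ already coincides with the full Nichols algebra, and finally exhibit an explicit graded-algebra isomorphism between the two presentations.

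Specializing \eqref{eq:braiding-W} at $m = i$ yields $c^{-}_{i,i}(w_\ell \ot w_r) = -\xi^{-2i(\ell+r)}\, w_{-r}\ot w_{\ell+2r}$. Direct inspection shows that $w_0\ot w_0$, $w_1\ot w_2$ and $w_2\ot w_1$ are $(-1)$-eigenvectors of $c^{-}_{i,i}$, producing the three monomial relations $w_0^2 = w_1 w_2 = w_2 w_1 = 0$. The remaining six basis tensors split into two three-dimensional $c^{-}_{i,i}$-invariant subspaces, $\langle w_0\ot w_1,\ w_1\ot w_0,\ w_2\ot w_2\rangle$ and $\langle w_0\ot w_2,\ w_2\ot w_0,\ w_1\ot w_1\rangle$, on each of which $c^{-}_{i,i}$ acts as a cyclic permutation twisted by powers of $\xi^{\pm 2i}$. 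Solving the small linear system $(\id + c^{-}_{i,i})(\alpha, \beta, \gamma) = 0$ on each subspace produces a unique (up to scalar) kernel vector, and for $i=1,2$ these match exactly the two non-monomial relations in the statement.

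To upgrade the quadratic approximation to the full Nichols algebra I would repeat the top-degree count from the end of the proof of Theorem~\ref{thm:pres-Wom}: using the five quadratic relations one checks that the degree-$4$ component of the quadratic algebra is one-dimensional (spanned by a single monomial analogous to $w_0 w_1 w_0 w_2$) and that the degree-$5$ component vanishes, so that \cite[Theorem~6.4]{AG} yields $\hat{\B}_2(W^{-}_{i,i}) = \B(W^{-}_{i,i})$. The twist-equivalence established just before the theorem independently predicts, via Lemma~\ref{lem:t-equiv-graded-iso}, that $\B(W^{-}_{i,i})$ has the Hilbert series of $\mathcal{E}_3$; so the effective content reduces to verifying that the quadratic quotient has total dimension at most $12$.

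For the claim that the two algebras are isomorphic as graded algebras I would define $\phi\colon \B(W^{-}_{1,1}) \to \B(W^{-}_{2,2})$ on generators by $\phi(w_0) = w_0$, $\phi(w_1) = w_2$, $\phi(w_2) = w_1$ and check, using $\xi^3 = 1$, that each defining relation of $\B(W^{-}_{1,1})$ is sent to a nonzero scalar multiple of a defining relation of $\B(W^{-}_{2,2})$; for instance $\phi(\xi^2 w_0 w_2 + w_2 w_0 + \xi w_1^2) = \xi^2 w_0 w_1 + w_1 w_0 + \xi w_2^2$, which equals $\xi$ times $\xi w_0 w_1 + \xi^2 w_1 w_0 + w_2^2$. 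I expect the main obstacle to be the dimension count in the previous paragraph: showing that no unexpected higher-degree relations appear, so that the Hilbert series is exactly $1 + 3t + 4t^2 + 3t^3 + t^4$ as forced by twist-equivalence. Once this is in place, both presentations and the graded isomorphism follow by direct inspection.
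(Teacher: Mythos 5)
Your proposal is correct and follows essentially the same route as the paper: compute $\Ker(\id+c^{-}_{i,i})$ in degree $2$ to get the five quadratic relations, check that the quadratic quotient has one-dimensional degree-$4$ component and vanishes in degree $5$ so that \cite[Theorem 6.4]{AG} gives $\hat{\B}_{2}(W^{-}_{i,i})=\B(W^{-}_{i,i})$, and then exhibit an explicit graded isomorphism. Your map $w_{0}\mapsto w_{0}$, $w_{1}\mapsto w_{2}$, $w_{2}\mapsto w_{1}$ is in fact the same linear map as the paper's $x_{k}\mapsto x_{-k}$ written in the $w$-basis, and your direct check that it permutes the defining relations up to powers of $\xi$ (using $\xi^{3}=1$) is a correct, slightly more hands-on verification of the same isomorphism.
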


\begin{proof}
Computing the kernels of degree 2 of the quantum symmetrizer associated with the braided 
vector spaces $W^{-}_{i,i}$ for $i=1,2$, yield the following presentations of the corresponding quadratic approximations 
$\hat{\B}_{2}(W^{-}_{i,i}) = T(W^{-}_{i,i}) / \mathcal{J}_{2} $
of the Nichols algebras:
\begin{align*}
\hat{\B}_{2}(W^{-}_{1,1})& =\Bbbk \{w_{0},w_{1},w_{2}\ :\ w_{0}^{2},\ w_{1}w_{2},\ w_{2}w_{1},\ 
\xi^{2}w_{0}w_{2} + w_{2}w_{0} + \xi w_{1}^{2},\ {\xi^{2}w_{0}w_{1} +\xi  w_{1}w_{0} + w_{2}^{2}}\}\\
\hat{\B}_{2} (W^{-}_{2,2})& =\Bbbk \{w_{0},w_{1},w_{2}\ :\ w_{0}^{2},\ w_{1}w_{2},\ w_{2}w_{1},\ 
\xi w_{0}w_{2} + w_{2}w_{0} + \xi^{2} w_{1}^{2},\ \xi w_{0}w_{1} + \xi^{2}w_{1}w_{0} + w_{2}^{2}\}
\end{align*}
As in the proof of Theorem \ref{thm:pres-Wom}, a quick check using the quadratic relations 
gives that the homogeneous component 
of degree $4$ of both $\hat{\B}_{2}(W^{-}_{i,i})$ is linearly spanned by the element
$w_{0}w_{1}w_{0}w_{2}$ and the algebras vanish in degree 5. 
Then by \cite[Theorem 6.4]{AG}, we have that $\hat{\B}_{2}(W^{-}_{i,i}) = \B(W^{-}_{i,i})$ and we obtain 
a presentation of both Nichols algebras.

{
There is a way to get rid of the parameter $\xi$
in the presentations above.
Performing a change of basis in $W^{-}_{i,i}$ suggested by the linear 
transformation in \eqref{eq:def-linear-trans} 
$$
x_{k} = w_{0} + \xi^{k} w_{1}+\xi^{2k} w_{2},\qquad\text{ for all }0\leq k\leq 2,   
$$
one gets a different expression for the braiding:
$$
c_{i,i}^{-}(x_{\ell}\otimes x_{r}) :=  - x_{2\ell - r + i}\ot x_{\ell + i}\qquad \text{ for }i,\ell,r \in \mathbb{Z}_{3},
$$
and consequently another presentation for the Nichols algebras:
\begin{align*}
\B (W^{-}_{1,1})& =\Bbbk \{x_{0},x_{1},x_{2}\ :\ x_{0}x_{1},\ x_{1}x_{2},\ x_{2}x_{0},\ 
x_{0}x_{2} + x_{2}x_{1} + x_{1}x_{0},\ x_{0}^{2} + x_{1}^{2} + x_{2}^{2}\},\\
\B (W^{-}_{2,2})& =\Bbbk \{x_{0},x_{1},x_{2}\ :\ x_{0}x_{2},\ x_{1}x_{0},\ x_{2}x_{1},\ 
x_{0}x_{1} + x_{1}x_{2} + x_{2}x_{0},\ x_{0}^{2} + x_{1}^{2} + x_{2}^{2}\}.
\end{align*}
With this presentations, it is clear that the linear map $\varphi$ sending $x_{k}\mapsto x_{-k}$ interchanges
the presentations of $\B (W^{-}_{1,1})$ and $\B (W^{-}_{2,2})$. In fact, this is an homomorphism
of braided vector spaces $\varphi: W^{-}_{1,1} \to W^{-}_{2,2}$, since 
$(\varphi\ot \varphi)c_{1,1}^{-}(x_{\ell}\ot x_{r})  = -\varphi(x_{2(\ell +r)+1})\ot \varphi(x_{\ell + 1}) = - x_{\ell + r + 2}\ot x_{2\ell + 2} = 
c_{2,2}^{-}(x_{2\ell}\ot x_{2r}) = c_{2,2}^{-}\big(\varphi(x_{\ell})\ot \varphi(x_{r})\big)$. Thus, 
$\B (W^{-}_{1,1})$ and $\B (W^{-}_{2,2})$ are isomorphic as graded algebras, although they are not isomorphic
as objects in ${}_{K_{3}}^{K_{3}}\mathcal{YD}$.}
 \end{proof}

 \begin{remark}{\em
  Using the Majid-Radford product or \textit{bosonization}, one may consider the 
  Hopf algebras $\mathcal{E}_{3}\#K_{3}$, $\B(W^{-}_{1,1})\#K_{3}$ and $\B(W^{-}_{2,2})\#K_{3}$.
  These are $216$-dimensional non-pointed non-semisimple Hopf algebras whose coradical
  is isomorphic to $K_{3}$. Up to our best knowledge, these Hopf algebras were not considered 
  in the literature yet. Observe that using the presentation of the Nichols algebras and that of $K_{3}$, one can
  obtain and present these Hopf algebras by generators and relations. }
 \end{remark}

 \begin{remark}{\em
{
Note that the only difference between  
$\B(W^{-}_{0,m})$, 
$\B(W^{-}_{1,1})$ and $\B(W^{-}_{2,2})$ is the choice of
the $3$rd root of unity. This is clearly seen in the 
description of the braiding and the presentations. Indeed, 
if we write $\B_{\xi^{k}} =
\Bbbk \{w_{0},w_{1},w_{2}\ :\ w_{0}^{2},\ w_{1}w_{2},\ w_{2}w_{1},\ 
\xi^{2k}w_{0}w_{2} + w_{2}w_{0} + \xi^{k} w_{1}^{2},\ \xi^{2k}w_{0}w_{1} +\xi^{k}  w_{1}w_{0} + w_{2}^{2}\}$, then 
$\B_{\xi} = \B(W^{-}_{1,1})$,
$\B_{1} = \B(W^{-}_{0,m})$
and $\B(W^{-}_{2,2}) = \B_{\xi^{2}} $.}
}
\end{remark}

We know that the braided vector spaces $(W_{i,i}^{-}, c_{i,i}^{-})$ are t-equivalent
for all $i\in \mathbb{Z}_{3}$; in particular, the corresponding Nichols algebras
are isomorphic as graded vector spaces. Nevertheless, the 
Nichols algebras $\B_1$ and $\B_\xi$ are 
not isomorphic as algebras by the following theorem.

\begin{theorem}\label{thm:Bi-not-iso-B1}
$\B_1$ and $\B_\xi$ are not isomorphic as algebras.
\end{theorem}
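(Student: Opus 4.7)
The plan is to exhibit a purely algebra-theoretic invariant that separates $\B_{1}$ and $\B_{\xi}$, namely the zero locus of the induced squaring map on the cotangent space at the augmentation. Both algebras $A\in\{\B_{1},\B_{\xi}\}$ are finite dimensional, connected, graded, and generated in degree one, so they are local with unique maximal (and Jacobson radical) ideal $\mathfrak{m}$ equal to the augmentation ideal. Because the algebras are generated in degree one, the powers $\mathfrak{m}^{k}=\bigoplus_{j\geq k}A_{j}$ are intrinsic, and one has canonical identifications $\mathfrak{m}/\mathfrak{m}^{2}\cong W:=\kk\{w_{0},w_{1},w_{2}\}$ and $\mathfrak{m}^{2}/\mathfrak{m}^{3}\cong A_{2}$, with $\dim A_{2}=4$. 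The map
\[
\sigma\colon \mathfrak{m}/\mathfrak{m}^{2}\longrightarrow \mathfrak{m}^{2}/\mathfrak{m}^{3},\qquad \bar{v}\longmapsto \overline{v^{2}},
\]
is well defined (shifting $v$ by an element of $\mathfrak{m}^{2}$ alters $v^{2}$ only by something in $\mathfrak{m}^{3}$) and is transported by any algebra isomorphism to the corresponding map of the other algebra. Consequently the zero locus $Z(\sigma)\subseteq W$ is a linear invariant of $A$, and an algebra isomorphism $\B_{1}\simeq\B_{\xi}$ would have to induce a linear isomorphism of $W$ carrying $Z(\sigma_{\B_{1}})$ bijectively onto $Z(\sigma_{\B_{\xi}})$.

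The core computation is then to determine $Z(\sigma)$ in each case by expanding $v^{2}$ for $v=\alpha w_{0}+\beta w_{1}+\gamma w_{2}$ modulo the defining relations. In $\B_{1}$, presentation \eqref{eq:presentation-W0m} gives $w_{0}^{2}=w_{1}w_{2}=w_{2}w_{1}=0$, $w_{1}^{2}=-(w_{0}w_{2}+w_{2}w_{0})$, and $w_{2}^{2}=-(w_{0}w_{1}+w_{1}w_{0})$, so
\[
v^{2}=(\alpha\gamma-\beta^{2})(w_{0}w_{2}+w_{2}w_{0})+(\alpha\beta-\gamma^{2})(w_{0}w_{1}+w_{1}w_{0}).
\]
Since $\{w_{0}w_{1},w_{1}w_{0},w_{0}w_{2},w_{2}w_{0}\}$ is a basis of $A_{2}$, the equation $v^{2}=0$ reduces to $\alpha\gamma=\beta^{2}$ and $\alpha\beta=\gamma^{2}$; a short analysis (after normalising $\alpha=1$ these yield $\beta^{4}=\beta$) shows that $Z(\sigma_{\B_{1}})$ is the union of the four distinct lines $\kk w_{0}$, $\kk(w_{0}+w_{1}+w_{2})$, $\kk(w_{0}+\xi w_{1}+\xi^{2}w_{2})$, and $\kk(w_{0}+\xi^{2}w_{1}+\xi w_{2})$. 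For $\B_{\xi}$ the same procedure applied to the presentation in Theorem \ref{thm:Nichols-alg-Wii} produces four coefficient equations that include $\alpha\beta=\xi^{2}\gamma^{2}=\xi\gamma^{2}$ and $\alpha\gamma=\xi\beta^{2}=\xi^{2}\beta^{2}$; since $\xi\neq\xi^{2}$ these force $\beta=\gamma=0$, so $Z(\sigma_{\B_{\xi}})$ collapses to the single line $\kk w_{0}$.

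Once both zero loci have been identified, the conclusion will be immediate: a linear isomorphism of $\kk^{3}$ preserves the number of one-dimensional subspaces contained in a conical subvariety, so it cannot map a union of four distinct lines onto a single line, and no such isomorphism $\B_{1}\simeq\B_{\xi}$ can exist. I expect no serious obstacle in the execution; the only conceptually subtle step is the first one, namely the verification that $\sigma$ descends to a well-defined map on the quotients and that its zero locus is a genuine algebra invariant, which hinges on the fact that $\mathfrak{m}$ is intrinsic as the Jacobson radical of $A$. The subsequent degree-two calculations reduce to straightforward linear algebra in the four-dimensional space $A_{2}$.
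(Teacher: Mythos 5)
Your proposal is correct and follows essentially the same route as the paper: both arguments reduce to the computation that the square-zero elements of degree one form four lines in $\B_1$ (spanned by $w_0$ and $w_0+\xi^k w_1+\xi^{2k}w_2$ for $k=0,1,2$) but only the single line $\kk w_0$ in $\B_\xi$. Your packaging of the invariance step via the squaring map $\mathfrak{m}/\mathfrak{m}^2\to\mathfrak{m}^2/\mathfrak{m}^3$ along the Jacobson radical filtration is a slightly cleaner way of handling possibly non-graded isomorphisms than the paper's decomposition of hypothetical square-zero generators into homogeneous components, but the substance is identical.
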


\begin{proof} 
We first prove that $\B_1$ is generated by three generators 
$a,b,c$ such that $a^2=b^2=c^2=0$. 
A quick direct computation shows that 
$a=w_0, b=w_0+\xi w_1+\xi^2 w_2, c=w_0+\xi^2 w_1+\xi w_2$ 
are such generators.

We will now prove that this does not happen for $\B_\xi$, i.e., that no $\B_\xi$ 
cannot be generated by elements $a,b,c$ that satisfy 
$a^2=b^2=c^2=0$.  
Suppose, toward contradiction, that such $a,b,c$ exist.  
Denote by $x_{\ell}$ the homogeneous component of an element $x\in \B_{\xi}$
of degree $\ell$.
Since $\B_\xi$ is a graded algebra with $(\B_\xi)_0=\kk$ we conclude that
\begin{enumerate}
\item $a_0=b_0=c_0=0$,
\item $a_1^2=b_1^2=c_1^2=0$,
\item $a_1,b_1,c_1$ must span $V=(\B_\xi)_1$, 
and since $V$ is $3$-dimensional, this means that 
$a_1, b_1, c_1$ must be linearly independent.
\end{enumerate}
We will prove that the only elements $x\in V$ satisfying 
$x^2=0$ are multiples of $w_0$, thereby arriving at a contradiction.  
Suppose now that $x=\lambda_0 w_0+\lambda_1 w_1+\lambda_2 w_2\in V$ is 
such that $x^2=0$ in $\B_\xi$.  A quick computation shows that this implies that
the element 
\[y=\lambda_0\lambda_1(w_0w_1+w_1w_0)+\lambda_2^2 w_2^2+ 
\lambda_0\lambda_2(w_0w_2+w_2 w_0)+\lambda_1^2 w_1^2\in T(V)
\]
must be a linear combination of elements
\begin{align*}
r_1 &= \xi w_0 w_2 + \xi^2 w_2 w_0 + w_1^2, \\
r_2 &= \xi^2 w_0 w_1 + \xi w_1 w_0 + w_2^2
\end{align*}
in $T(V)$.  Comparing coefficients of $w_1^2$ and $w_2^2$ this can only happen if 
\[
y=\lambda_1^2 r_1 +\lambda_2^2r_2.
\]
But then we must have that $\lambda_0\lambda_2=\xi^2 \lambda_1^2=\xi \lambda_1^2$,
$\lambda_0\lambda_1 = \xi \lambda_2^2=\xi^2 \lambda_2^2$, and hence $\lambda_1=\lambda_2=0$.

\end{proof}

\bigbreak

\noindent
\textbf{Case $i\neq0$, $i\neq m$:}
By Corollary \ref{cor:Wim-iso-gvs-Dn}, we know that 
the Nichols algebras $\B(W^\epsilon_{i,m},c^\epsilon_{i,m})$ are isomorphic
as graded vector spaces to the Nichols 
algebras $\B(\D_{3}, d^\epsilon_{i,m})$. The latter are finite-dimensional 
if and only if 
there exists a basis $\{y_{k}\}_{k\in \mathbb{Z}_{3}}$
of $\CC\D_{3}$
such that $d^\epsilon_{i,m}(y_{\ell}\ot y_{r}) = - y_{2\ell -r}\ot y_{r}$,
by Theorem \ref{thm:hecken-mehir-vendra}. This can only occur only if
$i=0$ or $i=m$.

\end{document}